\documentclass[11pt,reqno]{amsart}
\textheight=22cm
\textwidth=16cm
\hoffset=-1.8cm
\voffset=-1.7cm
\usepackage[colorlinks=true,
pdfstartview=FitV, linkcolor=cyan, citecolor=magenta,
urlcolor=blue]{hyperref}
\usepackage{amsmath,amsfonts,latexsym,amssymb}
\usepackage{mathrsfs}
\usepackage[latin1]{inputenc}
\usepackage[T1]{fontenc}
\usepackage{ae,aecompl}
\usepackage{braket}
\usepackage{comment}
\usepackage{color}
\usepackage{graphicx}
\usepackage{subfig}
\newtheorem{theorem}{Theorem}[section]
\newtheorem{lemma}[theorem]{Lemma}
\newtheorem{proposition}[theorem]{Proposition}
\newtheorem{corollary}[theorem]{Corollary}

\newtheorem*{thm*}{\protect\theoremname}
\theoremstyle{plain}
\newtheorem*{cor*}{\protect\corollaryname}
\renewcommand{\leq}{\leqslant}
\renewcommand{\geq}{\geqslant}

\long\def\@savemarbox#1#2{\global\setbox#1\vtop{\hsize\marginparwidth 
  \@parboxrestore\tiny\raggedright #2}}
\marginparwidth .75in \marginparsep 7pt







\def\eproof{$\Box$ \medskip}

\title[Pressure metrics for deformation spaces of quasifuchsian groups]
{Pressure metrics for deformation spaces of quasifuchsian groups with parabolics}
\author[Bray]{Harrison Bray}
\address{George Mason University, Fairfax, VA 22030 }
\author[Canary]{Richard Canary}
\address{University of Michigan, Ann Arbor, MI 41809}
\author[Kao]{Lien-Yung Kao}
\address{George Washington University, Washington, D.C. 20052}
\thanks{
Canary was partially supported by the grants DMS-1564362 and DMS-1906441 from the National Science Foundation (NSF). 
Kao was partially supported by the grant DMS-1703554 from NSF
}

\makeatother

\providecommand{\corollaryname}{Corollary}
\providecommand{\theoremname}{Theorem}

\begin{document}
\begin{abstract}
In this paper, we produce a  mapping class group invariant pressure metric on the space $QF(S)$ of 
quasiconformal deformations of a co-finite area Fuchsian group uniformizing $S$.
Our pressure metric arises from an analytic pressure form on $QF(S)$
which is degenerate only on pure bending vectors on the Fuchsian locus. Our techniques also
show that the Hausdorff dimension of the limit set varies analytically.
\end{abstract}

\maketitle
\maketitle

\setcounter{tocdepth}{1}
\tableofcontents{}

\section{Introduction}

We construct a pressure metric on the quasifuchsian space  $QF(S)$ of quasiconformal deformations, within
$\mathsf{PSL}(2,\mathbb C)$, of a Fuchsian group $\Gamma$ in $\mathsf{PSL}(2,\mathbb R)$ whose quotient
$\mathbb H^2/\Gamma$ has finite area and is homeomorphic to the interior of a compact surface $S$.
Our pressure metric is a mapping class group invariant path metric,
which is a Riemannian metric on the complement of the submanifold of Fuchsian representations. Our metric
and its construction generalize work of Bridgeman \cite{bridgeman-wp} in the case that $\mathbb H^2/\Gamma$
is a closed surface.

McMullen  \cite{mcmullen-pressure} initiated the study of pressure metrics, by constructing a pressure metric on
the Teichm\"uller space of a closed surface. His pressure metric is one way of formalizing Thurston's notion
of constructing a metric on Teichm\"uller space as the ``Hessian of the length of a random geodesic''
(see  also Wolpert \cite{wolpert}, Bonahon \cite{bonahon} and Fathi-Flaminio \cite{fathi-flaminio}) and like Thurston's metric 
it agrees with the classical Weil-Petersson metric (up to scalar multiplication).
Subsequently, Bridgeman \cite{bridgeman-wp} constructed a pressure metric on quasifuchsian space,
Bridgeman, Canary, Labourie and Sambarino \cite{BCLS} constructed pressure metrics on deformation spaces of Anosov
representations, and Pollicott and Sharp \cite{pollicott-sharp} constructed pressure metrics on spaces of
metric graphs (see also Kao  \cite{kao-graphs}).  The main tool in the construction of these pressure metrics is
the Thermodynamic Formalism
for topologically transitive, Anosov flows  with compact support and their associated well-behaved finite Markov codings.

The major obstruction to extending the constructions of pressure metrics to deformation spaces
of geometrically finite (rather than convex cocompact) Kleinian groups and related
settings is that the support of the recurrent portion of the geodesic flow is not compact and
hence there is not a well-behaved finite Markov coding. Mauldin-Urbanski \cite{MU} and Sarig \cite{sarig-2009} extended
the Thermodynamical Formalism to the setting of topologically mixing Markov shifts with countable alphabet and the (BIP)
property. In the case of finite area hyperbolic surfaces, Stadlbauer \cite{stadlbauer} and Ledrappier and Sarig \cite{LS}
construct and study a topologically mixing countable Markov coding with the (BIP) property for the recurrent portion of the geodesic
flow of the surface. In previous work, Kao \cite{kao-pm} showed how to adapt
the Thermodynamic Formalism in the setting of the Stadlbauer-Ledrappier-Sarig coding to 
construct pressure metrics on Teichm\"uller spaces of punctured surfaces.  

We adapt the techniques developed by Bridgeman \cite{bridgeman-wp} and Kao \cite{kao-pm} into our setting
to construct a pressure metric which can again be naturally interpreted as the Hessian of the  (renormalized) length of a random geodesic.

\begin{thm*}
[Theorem \ref{main theorem}] If $S$ is a compact surface
with non-empty boundary, the pressure form $\mathbb P$ on $QF(S)$ induces a ${\rm Mod}(S)$-invariant
path metric, which is an analytic Riemannian metric on the complement of the Fuchsian locus.

Moreover, if $v\in T_\rho(QF(S))$, then $\mathbb{P}(v,v)=0$ if and
only if  $\rho$ is Fuchsian and $v$ is a pure bending vector.
\end{thm*}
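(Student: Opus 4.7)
The plan is to prove the three assertions in the order (i) analyticity and Riemannian character off the Fuchsian locus, (ii) characterization of the kernel of $\mathbb P$, and (iii) upgrading the semi-definite form to a genuine path metric with mapping class group invariance. The underlying machinery is the Stadlbauer--Ledrappier--Sarig countable Markov coding for the recurrent part of the geodesic flow on $\mathbb H^2/\Gamma$, pulled back through the holonomy of $\rho\in QF(S)$ to produce, for each $\rho$, a locally Hölder cocycle $\tau_\rho$ on the symbolic space whose periods record the translation lengths of $\rho(\g)$. The entropy $\delta(\rho)$, which equals the Hausdorff dimension of the limit set by Patterson--Sullivan theory and the work in the paper, is implicitly defined by $P(-\delta(\rho)\tau_\rho)=0$ in the Mauldin--Urbanski/Sarig pressure theory for $(\mathrm{BIP})$ shifts.

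For (i), I would run the standard Bridgeman--Kao derivation: differentiating $P(-\delta(\rho_t)\tau_{\rho_t})=0$ twice at $t=0$ along a smooth path with $\dot\rho_0=v$, and using that the equilibrium state $\mu_\rho$ has first derivative of pressure equal to $\int \cdot\, d\mu_\rho$, one obtains
\[
\mathbb P_\rho(v,v) \;=\; \frac{\Var_{\mu_\rho}\!\bigl(\dot\tau - \dot\delta\,\tau\bigr)}{\delta(\rho)\int \tau_\rho\, d\mu_\rho},
\]
which is manifestly nonnegative. Real-analyticity in $\rho$ of $\delta$, of $\mu_\rho$, and of the variance then follows from the Ruelle--Sarig perturbation theory for the transfer operator on the Banach space of locally Hölder potentials, provided one verifies the uniform exponential tail estimates for $\tau_\rho$ across $QF(S)$ that control the cuspidal excursions; this is the place where the parabolic setting really departs from the closed case and where I expect the main technical work to sit.

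For (ii), the numerator above vanishes exactly when $\dot\tau - \dot\delta\,\tau$ is cohomologous to a constant, so by the Livsic theorem applied periodwise we get $\dot\ell_\g(\rho) = \dot\delta(\rho)\,\ell_\g(\rho)$ for every conjugacy class $\g$. Since $\rho$ sits in $QF(S)$ and length functions are complex-analytic, this real linear relation on all $\ell_\g$ forces $v$ to be tangent to the real part of a conformal rescaling of the length spectrum; marked length spectrum rigidity in $\mathsf{PSL}(2,\mathbb C)$ then shows that such a $v$ must be a pure bending deformation, and bendings only live tangent to $QF(S)$ at the Fuchsian locus. The converse, that pure bendings on the Fuchsian locus are in the kernel, is the direct computation that bending preserves every real translation length to first order, hence kills $\dot\tau$ modulo coboundary.

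For (iii), mapping class group invariance is automatic because $\tau_\rho$, $\delta(\rho)$, and $\mu_\rho$ are all defined from the intrinsic geometry of $\rho$. To upgrade the analytic pseudo-Riemannian form to a path metric on the full space $QF(S)$, I would define $d_{\mathbb P}(\rho_1,\rho_2)$ as the infimum of $\int\sqrt{\mathbb P(\dot\rho_t,\dot\rho_t)}\,dt$ over piecewise-smooth paths, and show this separates points. The key is that the degeneracy locus --- bending directions at Fuchsian points --- is a proper real analytic subvariety of $T(QF(S))$, so any path between distinct $\rho_1\neq\rho_2$ can be perturbed off it on a set of positive measure, after which the Riemannian bound from (i) gives positive length; the Fuchsian locus being totally real in $QF(S)$ also ensures that segments inside it contribute positively except in the pure bending directions, which do change the representation and hence can be detected by the remaining length functions.
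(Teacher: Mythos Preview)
Your outline for (i) is broadly correct and matches the paper's approach, though you should be aware that analyticity of the equilibrium state and the variance is delivered by the Mauldin--Urbanski/Sarig results quoted in the paper (Theorem \ref{pressure analytic}); the cuspidal tail control you mention is packaged into Proposition \ref{roof analytic}.

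The genuine gap is in (ii). From the Liv\v{s}ic step you correctly arrive at $D_v\ell_\gamma = k\,\ell_\gamma(\rho)$ for all $\gamma$, with $k=-D_vh/h(\rho)$. But your appeal to ``marked length spectrum rigidity in $\mathsf{PSL}(2,\mathbb C)$'' to deduce that $v$ is a pure bending vector does not work: that rigidity is a statement about equality of length spectra between two representations, not about a first-order linear relation among derivatives of length functions at a single $\rho$. There is no mechanism in your sketch forcing $\rho$ to be Fuchsian, which is the crux. The paper's argument here is substantially harder and follows Bridgeman: one analyzes asymptotics of $\log|\lambda_{\alpha^n\gamma}|$ and their derivatives to prove that for every $\gamma\in\Gamma$ the quantities $\lambda_\gamma(\rho)^2$ and $\mathrm{tr}_\gamma(\rho)^2$ are \emph{real}, and that whenever $D_v\mathrm{tr}_\alpha\ne 0$ one has $\mathrm{Re}\bigl(D_v\lambda_\alpha/\lambda_\alpha(\rho)\bigr)=0$. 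This simultaneously forces $k=0$ (so $D_v\ell_\gamma=0$ for all $\gamma$) and shows $\rho(\Gamma)$ is not Zariski dense in $\mathsf{PSL}(2,\mathbb C)$; since the limit set is a Jordan curve, no element can swap the complementary disks, and the Zariski closure must be conjugate into $\mathsf{PSL}(2,\mathbb R)$, so $\rho$ is Fuchsian. Only then does one decompose $v=v_1+v_2$ with $v_1\in T_\rho F(S)$ and $v_2$ bending, and use $D_{v_1}\ell_\gamma=0$ for all $\gamma$ to kill $v_1$. Your phrase ``bendings only live tangent to $QF(S)$ at the Fuchsian locus'' assumes what must be proved.

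For (iii), your perturbation-off-the-degeneracy-locus argument is too loose to yield a positive lower bound on path length; perturbing onto a set of positive measure where the form is nonzero does not by itself bound the integral away from zero. The paper instead invokes a clean general lemma (\cite[Lemma 13.1]{BCLS}): if a non-negative form is Riemannian off a codimension-$\ge 1$ submanifold $N$ and its restriction to $TN$ is Riemannian, then the induced path pseudo-metric is a metric. The needed input, that $\mathbb P$ restricted to $T F(S)$ is non-degenerate, is Kao's result for Teichm\"uller spaces of punctured surfaces. Mapping class group invariance is established separately in the paper (Corollary \ref{geometricpressureform}) via the geometric formula for the intersection, not merely by the assertion that everything is intrinsic.
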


The control obtained from the Thermodynamic Formalism allows us to
see that the topological entropy of the geodesic flow  of the  quasifuchsian hyperbolic
3-manifold varies analytically over $QF(S)$.  We recall that the topological entropy $h(\rho)$ of $\rho$ is the
exponential growth rate of the number of closed orbits of the geodesic flow of $N_\rho=\mathbb H^3/\rho(\Gamma)$ of  length at most $T$.
More precisely, if 
$$R_T(\rho)=\{[\gamma]\in[\Gamma]\ |\ 0< \ell_{\rho}(\gamma)\le T\},$$
where $[\Gamma]$ is the collection of conjugacy classes in $\Gamma$ and $\ell_\rho(\gamma)$ is the
translation length of the action of $\rho(\gamma)$ on $\mathbb H^3$,  then the topological entropy
is given by
$$h(\rho)=\lim_{T\to\infty} \frac{\#R_T(\rho)}{T}.$$

Sullivan \cite{sullivan-GF} showed that the topological entropy and the Hausdorff dimension of the limit
set agree for quasifuchsian groups. So we see that the Hausdorff dimension
of the limit set varies analytically over $QF(S)$, generalizing a result of Ruelle
\cite{ruelle-hd} for quasifuchsian deformation spaces of closed surfaces.
Schapira and Tapie \cite[Thm. 6.2]{schapira-tapie} previously established that the entropy is
$C^1$ on $QF(S)$ and computed its derivative (as a special case of a much more general result).

\begin{cor*}
[Corollary \ref{h-dim analytic}] 
If $S$ is a compact surface with non-empty boundary, then the Hausdorff dimension of
the limit set varies analytically over $QF(S)$.
\end{cor*}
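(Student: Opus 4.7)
The plan is to reduce the claim to the analytic variation of the topological entropy $h(\rho)$ over $QF(S)$, since by Sullivan's theorem \cite{sullivan-GF} one has the identification
$$\dim_{\mathcal H}\bigl(\Lambda_{\rho(\Gamma)}\bigr) = h(\rho)$$
for every $\rho \in QF(S)$. Thus it suffices to promote the Schapira--Tapie $C^1$ result to full real-analyticity, and the natural route is through the Thermodynamic Formalism machinery already assembled (and presumably refined in the body of the paper) for the construction of the pressure form $\mathbb P$.

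The concrete strategy uses the Bowen-type characterization of the entropy via the Stadlbauer--Ledrappier--Sarig countable Markov coding of the recurrent part of the geodesic flow on $\mathbb H^2/\Gamma$: one has a family of H\"older potentials $\hat\ell_\rho$ on the countable Markov shift whose integrals over periodic orbits recover the hyperbolic translation lengths $\ell_\rho(\gamma)$, and the entropy is characterized implicitly by
$$F(\rho,s) \;:=\; P\bigl(-s\,\hat\ell_\rho\bigr) \;=\; 0 \quad \text{at }s=h(\rho),$$
where $P$ is the Gurevich pressure of the (BIP) countable Markov shift. One then applies the analytic implicit function theorem, which requires two inputs: (a) joint analyticity of $F$ in $(\rho,s)$, and (b) non-vanishing of $\partial_s F$ at $(\rho,h(\rho))$. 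Input (b) is standard: the derivative equals $-\int \hat\ell_\rho\,d\mu_{-h(\rho)\hat\ell_\rho}$, and this is strictly negative because $\hat\ell_\rho>0$ on a set of full measure for the equilibrium state.

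The hard part is (a). It decomposes into two subclaims. First, the assignment $\rho \mapsto \hat\ell_\rho$ must land analytically in a Banach space of locally H\"older potentials on the countable Markov shift with sufficient tail decay at the parabolic cusp symbols; this will follow from the analytic dependence of traces and translation lengths of $\rho(\gamma)$ on $\rho\in QF(S)$ together with the explicit construction of the coding, which expresses $\hat\ell_\rho$ as a sum of contributions indexed by cusp excursions whose analytic dependence on $\rho$ can be controlled uniformly. Second, one needs analytic dependence of the Gurevich pressure on the potential in this function space. In the compact case this comes from analytic perturbation theory of transfer operators with a spectral gap; in the countable alphabet setting it is the corresponding Sarig/Mauldin--Urbanski spectral theory, combined with the strong positive recurrence estimates that form the backbone of the paper's pressure metric construction.

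Assembling these ingredients, $F$ is jointly analytic, $\partial_s F < 0$ at the solution, and the analytic implicit function theorem produces a local real-analytic function $\rho \mapsto h(\rho)$ on $QF(S)$. Combined with Sullivan's equality, this gives the claimed analytic variation of $\dim_{\mathcal H}(\Lambda_{\rho(\Gamma)})$ over $QF(S)$. The main obstacle is the analyticity of the pressure in the potential in the countable Markov setting, which cannot be quoted from the classical compact-shift theory and instead must be grounded in the spectral analysis of the transfer operator carried out earlier in the paper.
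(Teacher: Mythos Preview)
Your approach is essentially the same as the paper's: the paper proves analyticity of $h(\rho)$ (Theorem \ref{entropy analytic}) by applying the analytic implicit function theorem to $P(-t\tau_\rho)=0$, using Proposition \ref{roof analytic} for the analytic variation of the roof functions $\tau_\rho$, the Mauldin--Urbanski/Sarig result (Theorem \ref{pressure analytic}) for analyticity of the Gurevich pressure, and the phase transition analysis (Theorem \ref{phase transition}) to guarantee finiteness of pressure and a unique zero; Corollary \ref{h-dim analytic} then follows from Sullivan's identification of $h(\rho)$ with the Hausdorff dimension. The only minor corrections are that the analyticity of pressure is quoted directly from \cite{MU} rather than redeveloped via spectral theory in the paper, and that $\tau_\rho$ is merely eventually positive, so the nonvanishing of $\partial_t P$ is obtained by passing to the cohomologous strictly positive function of Corollary \ref{positive roof} (or via Lemma \ref{roof integrable}).
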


Concretely, the pressure form $\mathbb P$ at a representation
$\rho_0$ is the Hessian of the renormalized pressure intersection  $J(\rho_0,\cdot)$ at $\rho_0$.
The pressure intersection of $\rho,\eta\in QF(S)$ is given by
$$I(\rho,\eta)=\lim_{T\to\infty} \frac{1}{|R_T(\rho)|}\sum_{[\gamma]\in R_T(\rho)}
\frac{\ell_\eta(\gamma)}{\ell_{\rho}(\gamma)}$$
and the renormalized pressure intersection is given by
$$J(\rho,\eta)=\frac{h(\eta)}{h(\rho)}\lim_{T\to\infty} \frac{1}{|R_T(\rho)|}\sum_{[\gamma]\in R_T(\rho)}
\frac{\ell_\eta(\gamma)}{\ell_{\rho}(\gamma)}.$$
The pressure intersection was first defined by Burger \cite{burger} for pairs of convex cocompact Fuchsian
representations.
Schapira and Tapie \cite{schapira-tapie} defined an intersection function  for negatively curved manifolds with an entropy gap at infinity, by generalizing the geodesic
stretch considered by Knieper \cite{knieper} in the compact setting.  Their definition applies in a much more general framework, but 
agrees with our notion in this setting, see \cite[Prop. 2.17]{schapira-tapie}. 

Let $(\Sigma^+,\sigma)$ be the Stadlbauer-Ledrapprier-Sarig coding of a Fuchsian group $\Gamma$ giving a finite area uniformization of $S$.
If $\rho\in QF(S)$ we construct a roof function $\tau_\rho:\Sigma^+\to \mathbb R$ whose periods are translation lengths of elements of $\rho(\Gamma)$.
The key technical work in the paper is a careful analysis of these roof functions. 
In particular, we show that they vary analytically over $QF(S)$, see Proposition \ref{roof analytic}.
If $P$ is the Gurevich pressure function (on the space of all well-behaved roof functions), then the
topological entropy $h(\rho)$ of $\rho$ is the unique solution of $P(-t\tau_\rho)=0$. Our actual working definition of the intersection function
will be expressed in terms of equilibrium states on $\Sigma^+$ for the functions $-h(\rho)\tau_\rho$, but we will show in Theorem \ref{geometricintersection}
that this thermodynamical definition agrees with the more geometric definition given above.

Following Burger \cite{burger}, if $\rho,\eta\in QF(S)$, we define, the {\em Manhattan curve}
$$\mathcal C(\rho,\eta)=\{ (a,b) \ | a,b\ge 0,\ a+b>0,\ \text{ and }\ P(-a\tau_\rho-b\tau_\eta)=0\}.$$
The following result generalizes work of Burger \cite{burger} and Kao \cite{kao-manhattan}.

\begin{thm*}[Theorems \ref{Manhattan} and  \ref{geometricintersection}]
If $S$ is a compact surface with non-empty boundary,
and $\rho,\eta\in QF(S)$, then $\mathcal C(\rho,\eta)$ 
\begin{enumerate}
\item
is a closed subsegment of an analytic curve,
\item
has endpoints  $(h(\rho),0)$ and $(0,h(\eta))$,
\item
and is strictly convex, unless $\rho$ and $\eta$ are conjugate in ${\rm Isom}(\mathbb H^3)$.
\end{enumerate}
Moreover,  the tangent line to $\mathcal C(\rho,\eta)$ at $(h(\rho),0)$ has slope
$-I(\rho,\eta).$
\end{thm*}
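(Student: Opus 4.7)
The plan is to analyze the level set $\{P(-a\tau_\rho-b\tau_\eta)=0\}$ using the analyticity and convexity properties of the Gurevich pressure $P$, together with Livsic-type rigidity in the countable Markov shift setting and marked length spectrum rigidity for quasifuchsian representations.

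First I would record the \emph{endpoints}. By the characterization of the topological entropy recalled in the introduction, $h(\rho)$ is the unique positive $t$ with $P(-t\tau_\rho)=0$ and similarly for $h(\eta)$, which immediately places $(h(\rho),0)$ and $(0,h(\eta))$ on $\mathcal C(\rho,\eta)$.

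Next, for \emph{analyticity}, I would apply the implicit function theorem to the function $F(a,b)=P(-a\tau_\rho-b\tau_\eta)$ on the interior of the positive quadrant. By Proposition \ref{roof analytic} the roof functions vary analytically, and by the standard properties of the Gurevich pressure in the BIP setting $P$ is real-analytic on the class of well-behaved potentials. The variational formula for pressure gives
\[
\partial_a F(a,b)=-\int \tau_\rho\, d\mu_{a,b}, \qquad \partial_b F(a,b)=-\int \tau_\eta\, d\mu_{a,b},
\]
where $\mu_{a,b}$ is the equilibrium state of $-a\tau_\rho-b\tau_\eta$; both integrals are strictly negative since $\tau_\rho,\tau_\eta$ are positive on periodic orbits and the equilibrium state has full support on the coding. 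Hence $\nabla F\neq 0$ along $\mathcal C(\rho,\eta)$, so the curve is locally the graph of an analytic function; compactness of the segment between the two endpoints (which follows from monotonicity of $F$ in $a$ and $b$) yields a closed analytic subsegment.

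For \emph{strict convexity}, I would use the convexity of $t\mapsto P(f+tg)$ in the Ruelle/Gurevich thermodynamic formalism. Given two distinct points $(a_0,b_0)$ and $(a_1,b_1)$ in $\mathcal C(\rho,\eta)$, set $f_s=-((1-s)a_0+sa_1)\tau_\rho-((1-s)b_0+sb_1)\tau_\eta$. Then $s\mapsto P(f_s)$ is convex with $P(f_0)=P(f_1)=0$. Strict convexity fails only when $f_1-f_0=(a_0-a_1)\tau_\rho+(b_0-b_1)\tau_\eta$ is cohomologous to a constant. By the Livsic theorem for the Stadlbauer--Ledrappier--Sarig coding, this forces the corresponding periods over every periodic orbit to equal that constant times the symbolic period, which combined with the existence of closed orbits of arbitrarily large length forces the constant to be zero and produces a nontrivial linear relation $\alpha\ell_\rho(\gamma)=\beta\ell_\eta(\gamma)$ for every $[\gamma]$. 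Ruling out a proportional marked length spectrum by comparing entropies at the endpoints forces $\alpha=\beta$ and hence $\ell_\rho=\ell_\eta$ on every conjugacy class; the marked length spectrum rigidity of quasifuchsian (more generally, discrete faithful) representations in $\mathsf{PSL}(2,\mathbb C)$ then implies $\rho$ and $\eta$ are conjugate in $\mathrm{Isom}(\mathbb H^3)$.

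Finally, for the \emph{tangent slope} at $(h(\rho),0)$, I would differentiate $F(a,b)=0$ implicitly to obtain
\[
\frac{db}{da}\Big|_{(h(\rho),0)}
=-\frac{\partial_a F(h(\rho),0)}{\partial_b F(h(\rho),0)}
=-\frac{\int \tau_\eta\, d\mu_\rho}{\int \tau_\rho\, d\mu_\rho},
\]
where $\mu_\rho$ is the equilibrium state of $-h(\rho)\tau_\rho$. The right-hand ratio is the thermodynamic definition of the intersection number, which agrees with $I(\rho,\eta)$ by Theorem \ref{geometricintersection}, giving slope $-I(\rho,\eta)$.

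The main obstacle I expect is the strict convexity step: turning the failure of strict convexity of pressure (a cohomological equality of the two roof functions up to a constant in the symbolic setting) into the rigidity conclusion that $\rho$ and $\eta$ are conjugate. This requires first importing a Livsic theorem for the countable Markov coding of Stadlbauer--Ledrappier--Sarig, then translating cohomology between the symbolic roof functions into equality of translation length spectra of $\rho$ and $\eta$ on $\Gamma$, and finally invoking marked length spectrum rigidity for quasifuchsian groups; care is needed because the presence of parabolics means one must check that the cusp orbits do not cause pathological cohomology obstructions.
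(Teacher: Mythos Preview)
Your overall strategy---endpoints from the entropy characterization, implicit function theorem for analyticity, pressure convexity for convexity, and cohomological rigidity for strict convexity---is essentially the paper's. The paper obtains convexity via the H\"older inequality applied to the Poincar\'e-type series $Q_{\rho,\eta}^{a,b}$ rather than via convexity of $s\mapsto P(f_s)$, but your route through the variance formula is equally valid in the BIP setting.

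There is, however, a genuine gap in your strict convexity argument. You correctly arrive at a relation $\alpha\,\ell_\rho(\gamma)=\beta\,\ell_\eta(\gamma)$ for all $[\gamma]$ with $(\alpha,\beta)\neq(0,0)$, but your claim that ``comparing entropies at the endpoints forces $\alpha=\beta$'' does not work: if $\ell_\rho=c\,\ell_\eta$ with $c=\beta/\alpha>0$, then entropy comparison only gives $h(\rho)=h(\eta)/c$, which is a consistency relation, not a constraint forcing $c=1$. In fact the Manhattan curve is then exactly the line $ca+b=h(\eta)$, and nothing in the entropy data alone excludes $c\neq 1$. The paper closes this gap by invoking Kim's marked length rigidity theorem, which asserts that $\ell_\rho(\gamma)=c\,\ell_\eta(\gamma)$ for all $\gamma$ (with \emph{any} $c>0$) already forces $\rho$ and $\eta$ to be conjugate in $\mathrm{Isom}(\mathbb H^3)$; the constant then equals $1$ a posteriori. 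You should replace your entropy step with an appeal to this stronger proportional-spectrum rigidity.

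A minor point: your implicit differentiation has the ratio inverted. Since $\partial_a F=-\int\tau_\rho\,d\mu_\rho$ and $\partial_b F=-\int\tau_\eta\,d\mu_\rho$, one gets $db/da=-\partial_a F/\partial_b F=-\int\tau_\rho\,d\mu_\rho\big/\int\tau_\eta\,d\mu_\rho$, not the reciprocal you wrote; the paper's stated slope $-I(\rho,\eta)$ corresponds to the other convention $da/db$.
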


We use Theorem \ref{Manhattan} in our proof of a rigidity result  for the renormalized pressure intersection, see Corollary 
\ref{intersection rigidity} , and in our proof that pressure intersection is analytic on $QF(S)\times QF(S)$, see
Proposition \ref{intersectionanalytic}. We also use it to obtain a rigidity theorem for weighted entropy in the spirit
of the Bishop-Steger rigidity
theorem for Fuchsian groups, see \cite{bishop-steger}. If $a,b>0$ and $\rho,\eta\in QF(S)$, we define the weighted entropy
$$h^{a,b}(\rho,\eta)=\lim \frac{1}{T} \#\{[\gamma]\in[\Gamma]\ |\ a\ell(\rho(\gamma))+b\ell(\eta(\gamma))\le T\}.$$

\medskip\noindent
\begin{cor*}[Corollary \ref{dualrigidity}]
If $S$ is a compact surface with non-empty boundary, $\rho,\eta\in QF(S)$ and $a,b>0$, then
$$h^{a,b}(\rho,\eta)\le \frac{h(\rho)h(\eta)}{bh(\rho)+ah(\eta)}$$
with equality if and only if $\rho=\eta$.
\end{cor*}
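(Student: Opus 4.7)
The plan is to recognize the weighted entropy $h^{a,b}(\rho,\eta)$ as encoding the point where the ray in direction $(a,b)$ meets the Manhattan curve $\mathcal{C}(\rho,\eta)$, and then exploit the convexity established in Theorem~\ref{Manhattan}. My first step is to identify $h^{a,b}(\rho,\eta)$ as the critical exponent of the combined Poincar\'e series $\sum_{[\gamma]} e^{-t(a\ell_\rho(\gamma)+b\ell_\eta(\gamma))}$ and argue, via the same counting-versus-pressure machinery from the Stadlbauer--Ledrappier--Sarig formalism that characterizes $h(\rho)$ as the unique zero of $t\mapsto P(-t\tau_\rho)$, that $t^{\ast}:=h^{a,b}(\rho,\eta)$ is the unique positive $t$ with $P(-t(a\tau_\rho+b\tau_\eta))=0$. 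Equivalently, $(ah^{a,b},\,bh^{a,b})\in\mathcal{C}(\rho,\eta)$.

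Once that identification is in hand, the inequality is immediate from convexity. By Theorem~\ref{Manhattan}(1)--(2), $\mathcal{C}(\rho,\eta)$ is a convex arc joining $(h(\rho),0)$ to $(0,h(\eta))$, so every $(x,y)\in\mathcal{C}(\rho,\eta)$ lies on the origin side of the chord connecting these endpoints, i.e.
$$h(\eta)\,x+h(\rho)\,y\le h(\rho)h(\eta).$$
Substituting $(x,y)=(ah^{a,b},bh^{a,b})$ and solving for $h^{a,b}$ gives the claimed bound.

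For the rigidity statement, I would argue as follows. Since $a,b>0$, the point $(ah^{a,b},bh^{a,b})$ lies strictly between the two endpoints of the chord. If equality holds, this interior point of the chord also lies on $\mathcal{C}(\rho,\eta)$, which by the strict convexity clause of Theorem~\ref{Manhattan}(3) forces the Manhattan curve to coincide with the whole chord, and hence $\rho$ and $\eta$ to be conjugate in ${\rm Isom}(\mathbb{H}^3)$; since elements of $QF(S)$ are considered up to conjugation, this means $\rho=\eta$. Conversely, if $\rho=\eta$ then $h(\rho)=h(\eta)=h$ and $\ell_\rho=\ell_\eta$, so both sides of the displayed inequality equal $h/(a+b)$.

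The step I expect to be the main obstacle is Step~1: showing that orbit counting under the weight $a\ell_\rho+b\ell_\eta$ is governed by the zero of the Gurevich pressure of $-t(a\tau_\rho+b\tau_\eta)$ in the countable-alphabet setting. This requires verifying the (BIP) and regularity hypotheses for the combined roof function, but since $\tau_\rho$ and $\tau_\eta$ each satisfy them (by the analytic control from Proposition~\ref{roof analytic} and the companion estimates) and the needed properties are preserved under positive linear combinations, this should reduce cleanly to the pressure-zero characterization already used in the proof of Theorem~\ref{Manhattan}.
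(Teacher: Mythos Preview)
Your approach is correct and coincides with the paper's: the corollary is presented there as a ``nearly immediate'' consequence of Theorem~\ref{Manhattan}, and the chord-versus-convex-curve argument you give is exactly what is intended. Two small remarks: your Step~1 is not an obstacle, since the paper already records (just before Theorem~\ref{Manhattan}, citing Kao~\cite{kao-manhattan}) that $P(-a\tau_\rho-b\tau_\eta)=0$ iff $h^{a,b}(\rho,\eta)=1$, which by homogeneity puts $(ah^{a,b},bh^{a,b})$ on $\mathcal C(\rho,\eta)$; and in the rigidity clause, ``conjugate in $\mathrm{Isom}(\mathbb H^3)$'' does not literally yield $\rho=\eta$ in $QF(S)=QC(\Gamma)/\mathsf{PSL}(2,\mathbb C)$ because complex conjugation is an isometry---the body version of the corollary states the conclusion as conjugacy in $\mathrm{Isom}(\mathbb H^3)$, which is precisely what your argument proves.
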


\bigskip\noindent
{\bf Other viewpoints:} If $\rho\in QF(S)$, then $N_\rho=\mathbb H^3/\rho(\Gamma)$ is a geometrically finite
hyperbolic \hbox{3-manifold.}  As such its dynamics may be analyzed using  techniques from dynamics which do not
rely on symbolic dynamics. For example, it naturally fits into the frameworks for geometrically finite
negatively curved manifolds developed by Dal'bo-Otal-Peign\'e \cite{DOP}, negatively curved Riemannian manifolds with
bounded geometry as studied by Paulin-Pollicott-Schapira \cite{PPS} and negatively curved manifolds with an entropy
gap at infinity as studied by Schapira-Tapie \cite{schapira-tapie}. 
In particular, the existence of equilibrium states and their continuous variation 
in our setting also follows from the work of  Schapira and Tapie \cite{schapira-tapie}.

Since all the geodesic flows of manifolds in $QF(S)$ are H\"older orbit equivalent, one should be able to think of
them all as arising from an analytically varying family of H\"older potential functions on the geodesic flow of a fixed hyperbolic 
\hbox{3-manifold.} However, for the construction
of the pressure metric it will be necessary to know that the pressure function is at least twice differentiable.
Results of this form do not yet seem to be available without 
symbolic dynamics. We have therefore chosen to develop the theory entirely from the viewpoint of the coding throughout the paper.

Iommi, Riquelme and Velozo \cite{IRV} have previously used the Dal'bo-Peign\'e coding \cite{dalbo-peigne} to study negatively curved manifolds of
extended Schottky type. These manifolds include the hyperbolic 3-manifolds associated to all  quasiconformal deformations of finitely generated Fuchsian
groups whose quotients have infinite area. In particular, they perform a phase transition analysis and show the existence and uniqueness of equilibrium
states in their setting. The symbolic approach to phase transition analysis can be traced back to Iommi-Jordan \cite{iommi-jordan}.
Riquelme and Velozo \cite{riquelme-velozo}
work in a more general setting which includes quasifuchsian groups with parabolics,
but without a coding, and obtain a phase transition analysis for the pressure function as well as the existence of equilibrium measures.

\medskip\noindent
{\bf Acknowledements:} The authors would like to thank Francois Ledrappier, Mark Pollicott, 
Ralf Spatzier,  and Dan Thompson for helpful conversations during the course of their investigation. We also
thank the referees  whose suggestions greatly improved the exposition.

\section{Background}
\subsection{Quasifuchsian space}
Let $S$ be a compact orientable surface with non-empty boundary and suppose that 
$\Gamma\subset\mathsf{PSL}(2,\mathbb R)$ is a discrete torsion-free group so that 
$\mathbb H^2/\Gamma$ is a finite area hyperbolic surface homeomorphic to the interior of $S$.
We say that $\rho:\Gamma\to \mathsf{PSL}(2,\mathbb C)$ is {\em quasifuchsian} if 
there exists a quasiconformal homeomorphism $\phi:\widehat{\mathbb C}\to\widehat{\mathbb C}$ such that
$\rho(\gamma)=\phi\gamma\phi^{-1}$ for all $\gamma\in\Gamma$. Equivalently, $\rho$ is quasifuchsian if
and only if there is an orientation-preserving bilipschitz homeomorphism from
$N_\rho=\mathbb H^3/\rho(\Gamma)$ to $N=\mathbb H^3/\Gamma$ in the homotopy class determined by $\rho$
(see Douady-Earle \cite{douady-earle}).
Let  $QC(\Gamma)\subset {\rm Hom}(\Gamma, \mathsf{PSL}(2,\mathbb C))$  denote the space of all  quasifuchsian representations.
We recall, see Maskit \cite[Thm. 2]{maskit-slice}, that $\rho:\Gamma\to \mathsf{PSL}(2,\mathbb C)$ is quasifuchsian if and only if
$\rho$ is discrete and faithful, $\rho(\partial S)$ is parabolic and $\rho(\Gamma)$ preserves a Jordan curve in $\widehat{\mathbb C}$.

The {\em quasifuchsian space} is given by 
$$QF(S)=QC(\Gamma)/\mathsf{PSL}(2,\mathbb C)\subset X(S)=
{\rm Hom}_{tp}(\Gamma,\mathsf{PSL}(2,\mathbb C))//\mathsf{PSL}(2,\mathbb C)$$
where ${\rm Hom}_{tp}(\Gamma,\mathsf{PSL}(2,\mathbb C))$ is the space of type-preserving representations 
of $\Gamma$ into $\mathsf{PSL}(2,\mathbb C)$ (i.e. representations taking
parabolic elements of $\Gamma$ to parabolic elements of $\mathsf{PSL}(2,\mathbb C)$).
We call $X(S)$ the relative character variety and it has the structure of a projective variety.
The space $QF(S)$ is a smooth open subset of $X(S)$, so is naturally a complex analytic manifold.
(See Kapovich \cite[Section 4.3]{kapovich-book} for details.)
Bers \cite{bers-SU} showed that $QF(S)$ admits a natural identification with $\mathcal T(S)\times\mathcal T(S)$, where $\mathcal T(S)$ is the
Teichm\"uller space of $S$.

If $\rho\in QC(\Gamma)$ and $\phi$ is a quasiconformal map such that
$\rho(\gamma)=\phi\gamma\phi^{-1}$ for all $\gamma\in\Gamma$, then
$\phi$ restricts to a $\rho$-equivariant map
\hbox{$\xi_\rho:\Lambda(\Gamma)\to\Lambda(\rho(\Gamma))$}
where $\Lambda(\rho(\Gamma))$ is the limit set of $\rho(\Gamma)$, i.e. the smallest closed $\rho(\Gamma)$-invariant subset of 
$\widehat{\mathbb C}$.
Notice that since $\xi_\rho$ is $\rho$-equivariant, it must take the attracting fixed point $\gamma^+$ of a hyperbolic
element $\gamma\in\Gamma$ to the attracting fixed point $\rho(\gamma)^+$ of $\rho(\gamma)$. Since attracting
fixed points of hyperbolic elements are dense in $\Lambda(\Gamma)$, $\xi_\rho$ depends only on $\rho$ (and not
on the choice of quasiconformally conjugating map $\phi$). 
We now record well-known fundamental properties of this limit map.

\begin{lemma} 
\label{limit map}
If $\rho\in QC(\Gamma)$, then there exists a $\rho$-equivariant bi-H\"older continuous map
$$\xi_\rho:\Lambda(\Gamma)\to\Lambda(\rho(\Gamma)).$$
Moreover, if $x\in\Lambda(\Gamma)$, then $\xi_\rho(x)$ varies complex analytically over
$QC(\Gamma)$.
\end{lemma}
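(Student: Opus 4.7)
The bi-Hölder statement is a routine consequence of the definition and classical quasiconformal theory. By definition of $QC(\Gamma)$, there exists a quasiconformal self-homeomorphism $\phi$ of $\widehat{\mathbb C}$ such that $\rho(\gamma)=\phi\gamma\phi^{-1}$ for every $\gamma\in\Gamma$; set $\xi_\rho:=\phi|_{\Lambda(\Gamma)}$. If $\phi$ is $K$-quasiconformal then so is $\phi^{-1}$, and Mori's theorem provides a global Hölder estimate of exponent $1/K$ for each; restricting to $\Lambda(\Gamma)$ yields the bi-Hölder assertion, with image equal to $\Lambda(\rho(\Gamma))$ because $\phi$ carries $\Gamma$-invariant closed sets to $\rho(\Gamma)$-invariant closed sets. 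To verify that $\xi_\rho$ does not depend on the choice of $\phi$, note that if $\phi'$ is another conjugator then $\phi'\phi^{-1}$ commutes with every element of $\rho(\Gamma)$; since $\rho(\Gamma)$ is non-elementary it fixes every attracting fixed point $\rho(\gamma)^+$ pointwise, and these are dense in $\Lambda(\rho(\Gamma))$, so $\phi$ and $\phi'$ agree on $\Lambda(\Gamma)$.

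For the analyticity statement, the plan is to verify it on a dense subset of $\Lambda(\Gamma)$ and then extend by uniform convergence. For any hyperbolic $\gamma\in\Gamma$, equivariance forces $\xi_\rho(\gamma^+)=\rho(\gamma)^+$; since the assignment $\rho\mapsto\rho(\gamma)$ is analytic on $QC(\Gamma)$ (its matrix entries are analytic functions on the ambient character variety) and $\rho(\gamma)$ is loxodromic throughout $QC(\Gamma)$, the rational formula for the attracting fixed point of $\rho(\gamma)$ exhibits $\xi_\rho(\gamma^+)$ as an analytic function of $\rho$. For an arbitrary $x\in\Lambda(\Gamma)$, pick hyperbolic $\gamma_n\in\Gamma$ with $\gamma_n^+\to x$; if one can show that $\xi_\rho(\gamma_n^+)\to\xi_\rho(x)$ uniformly on compact subsets of $QC(\Gamma)$, then $\xi_\rho(x)$ is a uniform limit of analytic functions and is itself analytic.

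The main obstacle, and really the only substantial step, is a uniform Hölder estimate on compact subsets: given compact $K\subset QC(\Gamma)$, one needs constants $C,\alpha>0$ so that $|\xi_\rho(y)-\xi_\rho(z)|\le C|y-z|^\alpha$ for all $\rho\in K$ and $y,z\in\Lambda(\Gamma)$ in the spherical metric. To obtain this I would appeal to the Ahlfors-Bers parametrization: locally around any $\rho_0\in QC(\Gamma)$, nearby representations are realized by quasiconformal conjugators $\phi^{\mu_\rho}$ arising from a holomorphic family of $\Gamma$-invariant Beltrami differentials $\mu_\rho$, normalized (for instance, fixing three chosen points) so that the Beltrami solutions depend holomorphically on $\rho$. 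Covering $K$ by finitely many such charts yields a uniform bound $\|\mu_\rho\|_\infty\le k<1$ on $K$, and Mori's theorem then produces the desired uniform Hölder estimate with exponent $\alpha=(1-k)/(1+k)$. As a bonus, this same local construction immediately gives a second proof of the analyticity conclusion, since $\xi_\rho(x)=\phi^{\mu_\rho}(x)$ depends holomorphically on $\rho$ by the Ahlfors-Bers theorem on holomorphic dependence of solutions of the Beltrami equation on their coefficients.
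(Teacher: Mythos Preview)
Your bi-H\"older argument is essentially identical to the paper's: both restrict a global quasiconformal conjugator and invoke the H\"older regularity of quasiconformal maps (the paper cites \cite[Thm.~10.3.2]{astala-iwaniec-martin}, you cite Mori).

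For analyticity the two arguments diverge in presentation, though not in substance. The paper simply invokes Sullivan \cite[Thm.~1]{sullivan2}: for any holomorphic one-parameter disk $\{\rho_z\}_{z\in\Delta}$ in $QC(\Gamma)$ there is a continuous $F:\Lambda(\Gamma)\times\Delta\to\widehat{\mathbb C}$ with $F(\cdot,z)=\xi_{\rho_z}$ and $F(x,\cdot)$ holomorphic in $z$; Hartogs' theorem then upgrades separate one-variable holomorphy to joint analyticity on $QC(\Gamma)$. Your approach instead unpacks the mechanism behind Sullivan's theorem by appealing directly to the Ahlfors--Bers holomorphic dependence of normalized Beltrami solutions, together with the Bers parametrization of $QC(\Gamma)$ by a holomorphic family of invariant Beltrami coefficients. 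This is correct and is in effect what underlies Sullivan's result. Your preliminary density-plus-uniform-limit step is also correct (the uniform H\"older exponent you extract from a uniform bound $\|\mu_\rho\|_\infty\le k<1$ does give locally uniform convergence of $\xi_\rho(\gamma_n^+)$ to $\xi_\rho(x)$), but as you yourself note it is superseded by the Ahlfors--Bers argument, which yields analyticity directly without passing through fixed points. The paper's route is shorter because it outsources this machinery to a citation; yours is more self-contained at the cost of re-deriving a known holomorphic-motion statement.
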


\begin{proof}
Since each $\xi_\rho$ is the restriction of a quasiconformal map $\phi:\widehat{\mathbb C}\to\widehat{\mathbb C}$
and quasiconformal maps are bi-H\"older (see \cite[Thm. 10.3.2]{astala-iwaniec-martin}), $\xi_\rho$ is also
bi-H\"older.

Suppose that $\{\rho_z\}_{z\in\Delta}$ is a complex analytic family of representations in $QC(\Gamma)$ parameterized by the unit disk $\Delta$.
Sullivan \cite[Thm. 1]{sullivan2} showed that there is a continuous map $F:\Lambda(\Gamma)\times\Delta\to \widehat{\mathbb C}$, so
that if $z\in\Delta$, then $F(\cdot,z)=\xi_{\rho_z}$ and if $x\in\Lambda(\Gamma)$, then $F(x,\cdot)$ varies holomorphically in $z$. Hartogs' Theorem
then implies that $\xi_\rho(x)$ varies complex analytically over all of $QC(\Gamma)$. 
\end{proof}

\subsection{Countable Markov Shifts}
A  two-sided {\em countable Markov shift} with countable alphabet $\mathcal A$ and transition matrix 
$\mathbb T\in\{0,1\}^{\mathcal A\times\mathcal A}$ is 
the set
$$\Sigma=\{x=(x_i)\in\mathcal A^{\mathbb Z}\ |\ t_{x_ix_{i+1}}=1\ {\rm for}\ {\rm all}\ i\in\mathbb Z\}$$ 
equipped with a shift map $\sigma:\Sigma\to\Sigma$ which takes $(x_i)_{i\in\mathbb Z}$ to
$(x_{i+1})_{i\in\mathbb Z}$. Notice that the shift simply moves the letter in place $i$ into
place $i-1$, i.e. it shifts every letter one place to the left.

Associated to any two-sided countable Markov shift $\Sigma$ is the  one-sided countable Markov shift
$$\Sigma^+=\{x=(x_i)\in\mathcal A^{\mathbb N}\ |\ t_{x_ix_{i+1}}=1\ {\rm for}\ {\rm all}\ i\in\mathbb N\}$$ 
equipped with a shift map $\sigma:\Sigma^+\to\Sigma^+$ which takes $(x_i)_{i\in\mathbb N}$ to
$(x_{i+1})_{i\in\mathbb N}$. In this case, the shift deletes the letter $x_1$ and moves every other letter
one place to the left. There is a natural projection map $p^+:\Sigma\to\Sigma^+$ given by
$p^+(x)=x^+=(x_i)_{i\in\mathbb N}$ which simply forgets all the terms to the left of $x_1$. 
Notice
that $p^+\circ\sigma=\sigma\circ p^+$. We will work entirely with one-sided shifts, except in the
final section.

One says that $(\Sigma^+,\sigma)$ is {\em topologically mixing} if for all $a,b\in \mathcal A$, there exists $N=N(a,b)$
so that if $n\ge N$, then there exists $x\in\Sigma$ so that $x_1=a$ and $x_n=b$. The shift $(\Sigma^+,\sigma)$ has the
big images and pre-images property (BIP) if there exists a finite subset $\mathcal B\subset\mathcal A$ so
that if $a\in\mathcal A$, then there exists $b_0,b_1\in\mathcal B$ so that $t_{b_0,a}=1=t_{a,b_1}$.

Given a one-sided countable Markov shift $(\Sigma^+,\sigma)$ and a function  $g:\Sigma^+\to \mathbb R$, let
$$V_n(g)=\sup \{ |g(x)-g(y)|\ | \ x,y\in\Sigma^+,\  x_i=y_i \ {\rm for}\ {\rm all}\ 1\le i\le n\}$$
be the {\em $n^{\rm th}$ variation} of $g$.
We say that $g$ is {\em locally H\"older continuous} if there exists $C>0$ and $\theta\in (0,1)$
so that 
$$V_n(g)\le C\theta^n$$
for all $n\in\mathbb N$. 
We say that two locally H\"older continuous functions $f:\Sigma^+\to\mathbb R$ and $g:\Sigma^+\to\mathbb R$ are
{\em cohomologous} if there exists a locally H\"older continuous function $h:\Sigma^+\to \mathbb R$ so
that 
$$f-g=h- h\circ\sigma.$$

Sarig  \cite{sarig99} considers the associated {\em Gurevich pressure} of a 
locally H\"older continuous function $g:\Sigma^+\to\mathbb R$, given by
$$P(g)=\lim_{n\to\infty}\frac{1}{n}\log \sum_{x\in \mathrm{Fix}^n\ |\ x_1=a} e^{S_ng(x)}$$
for some (any)  $a\in\mathcal A$ where 
$$S_n(g)(x)=\sum_{i=1}^{n} g(\sigma^i(x))$$
is the {\em ergodic sum}
and $\mathrm{Fix}^n=\{x\in\Sigma^+\ |\ \sigma^n(x)=x\}$.
The pressure of a locally H\"older continuous function $f$ need not be finite, but Mauldin and Urbanski \cite{MU}
provide the following characterization of when $P(f)$ is finite.

\begin{theorem}{\rm (Mauldin-Urbanski \cite[Thm. 2.1.9]{MU})}
\label{finite pressure}
Suppose that  $(\Sigma^+,\sigma)$ is a one-sided countable Markov shift which has BIP and is topologically mixing.
If $f$ is locally H\"older continuous,  then  $P(f)$ is finite if and only if
$$Z_1(f)=\sum_{a\in\mathcal A}e^{\sup\{ f(x)\ :\ x_1=a\}}<+\infty.$$
\end{theorem}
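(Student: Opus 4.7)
The plan is to prove the two implications separately. Write $Z_n(f,a):=\sum_{x\in\mathrm{Fix}^n,\, x_1=a}e^{S_n f(x)}$, so $P(f)=\lim_n \tfrac1n\log Z_n(f,a)$. For the easier direction ($Z_1(f)<\infty \Rightarrow P(f)<\infty$), I would use the H\"older bound $f(y)\le \sup f|_{[y_1]}$: for $x\in\mathrm{Fix}^n$ with $x_1=a$, each $\sigma^i x$ ($i=1,\dots,n$) has first coordinate one of $x_1,\dots,x_n$ cyclically, so $S_n f(x)\le \sum_{j=1}^n \sup f|_{[x_j]}$. Dropping the cyclic admissibility constraint then yields $Z_n(f,a)\le e^{\sup f|_{[a]}}\, Z_1(f)^{n-1}$, and hence $P(f)\le \log Z_1(f)<\infty$.

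For the harder direction, I would argue the contrapositive: assuming $Z_1(f)=+\infty$, construct $N$ and $b\in\mathcal B$ with $Z_N(f,b)=+\infty$. Fix $b\in\mathcal B$. By BIP, for each $c\in\mathcal A$ select $b_0(c),b_1(c)\in\mathcal B$ with $t_{b_0(c),c}=1=t_{c,b_1(c)}$. Since $\mathcal B$ is finite, topological mixing provides a uniform $N_0$ such that for all $b',b''\in\mathcal B$ and $n\ge N_0$ an admissible length-$n$ path from $b'$ to $b''$ exists; fix one such path $\pi^{(n)}_{b',b''}$ per triple. Setting $N:=2N_0+2$, form for each $c\in\mathcal A$ the admissible length-$N$ loop based at $b$,
$$L_c:\;\; b \xrightarrow{\pi^{(N_0)}_{b,b_0(c)}} b_0(c)\to c\to b_1(c) \xrightarrow{\pi^{(N_0)}_{b_1(c),b}} b,$$
and note that distinct $c$'s yield distinct length-$N$ words and hence distinct periodic points in $\mathrm{Fix}^N\cap\{x_1=b\}$.

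To bound the weight of $L_c$ from below, I would apply local H\"older continuity in the form $f(y)\ge \sup f|_{[y_1]}-C\theta$ at each of the $N$ coordinates; the two fixed connecting paths then contribute a factor depending only on the finite data $(b_0(c),b_1(c))\in \mathcal B^2$, yielding a constant $C''>0$ independent of $c$ with $e^{S_N f(L_c)}\ge C''\, e^{\sup f|_{[c]}}$. Summing gives $Z_N(f,b)\ge C''\, Z_1(f)=+\infty$. To conclude, I would use almost supermultiplicativity: concatenation of admissible loops of lengths $n,m$ at $b$ injects into $\mathrm{Fix}^{n+m}\cap\{x_1=b\}$, and the telescoping bound $V_k(f)\le C\theta^k$ gives $S_{n+m}f\ge S_n f+S_m f -2K$ for $K=C\theta/(1-\theta)$; hence $Z_{n+m}(f,b)\ge e^{-2K}Z_n(f,b)Z_m(f,b)$, and Fekete's lemma yields $P(f)\ge \tfrac1N\log Z_N(f,b)=+\infty$.

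The main obstacle is the uniform-length loop construction in the hard direction: BIP furnishes the finite bridging set $\mathcal B$ and topological mixing restricted to the finite set $\mathcal B\times\mathcal B$ produces the uniform mixing bound $N_0$, which together make a single period $N$ work for every $c\in\mathcal A$ simultaneously. Once this combinatorial hurdle is cleared, the H\"older bound $V_1(f)\le C\theta$ converts the summation over $c$ into a bound by $Z_1(f)$.
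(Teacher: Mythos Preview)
The paper does not prove this theorem; it is cited from Mauldin--Urbanski \cite[Thm.~2.1.9]{MU} without proof. Your argument is correct.

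The one glimpse the paper gives of Mauldin--Urbanski's own argument appears in the proof of Theorem~\ref{phase transition}, where they quote the inequality
\[
\sum_{i=n}^{n+s(n-1)} Z_i(f) \ge \frac{e^{-M+(M-m)n}}{q^{n-1}} Z_1(f)^n
\]
for the partition functions $Z_n(f)=\sum_{w\in E^n} e^{\sup\{S_n f(x)\,:\,x_i=w_i\}}$ taken over admissible words rather than periodic points. Their approach concatenates $n$ arbitrary letters via BIP bridges of bounded but \emph{variable} length, producing words of length anywhere in $[n, n+s(n-1)]$; this yields a quantitative lower bound valid for every $n$, which is precisely what the paper needs later to show $P\to+\infty$ as $t\downarrow \tfrac{1}{2(a+b)}$. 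Your route instead exploits topological mixing restricted to the finite set $\mathcal B\times\mathcal B$ to manufacture a single uniform period $N$---cleaner for the bare ``finite iff'' statement, but not directly giving the quantitative estimate the paper extracts. One small technical point: invoking Fekete when $Z_N(f,b)=+\infty$ is formally awkward, since Fekete is stated for real-valued sequences; it is cleaner to note that almost-supermultiplicativity together with $Z_m(f,b)>0$ for large $m$ forces $Z_n(f,b)=+\infty$ for all sufficiently large $n$ directly.
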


A  Borel probability measure $m$ on $\Sigma^+$ is said to be a {\em Gibbs state} for 
a locally H\"older continuous function $g:\Sigma^+\to\mathbb R$ if there exists a constant $B>1$  
and $C\in\mathbb R$ so that
$$\frac{1}{B}\le\frac{m([a_1,\ldots,a_n])}{e^{S_ng(x)-nC}}\le B$$ 
for all $x\in [a_1,\ldots,a_n]\}$, where $[a_1,\ldots,a_n]$ is the {\em cylinder} consisting of all $x\in\Sigma^+$ so
that $x_i=a_i$ for all $1\le i\le n$. 
Sarig \cite[Thm 4.9]{sarig-2009}  shows that a locally H\"older continuous function $f$ on a topologically mixing one-sided countable Markov shift  with BIP  so that $P(f)$ is
finite admits a Gibbs state $\mu_f$.
Mauldin-Urbanski \cite[Thm 2.2.4]{MU}  show that  if a locally H\"older continuous function $f$ on a topologically mixing one-sided countable Markov shift  with BIP  
admits a Gibbs state, then  $f$ admits a unique shift invariant Gibbs state.  We summarize their work
the statement below.

\begin{theorem} {\rm (Mauldin-Urbanski \cite[Thm 2.2.4]{MU},  Sarig \cite[Thm 4.9]{sarig-2009})}
\label{uniqgibbs}
Suppose that  $(\Sigma^+,\sigma)$ is a one-sided countable Markov shift which has BIP and is topologically mixing.
If $f$ is locally H\"older continuous and $P(f)$ is finite,  then  $f$ admits a unique shift invariant Gibbs state $\mu_f$.
\end{theorem}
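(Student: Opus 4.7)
The plan is simply to combine the two cited results in sequence, since the theorem as stated is exactly the conjunction of Sarig's existence result with Mauldin--Urbanski's uniqueness result. First, observe that the standing hypotheses on the shift ($(\Sigma^+,\sigma)$ is a topologically mixing countable Markov shift with BIP) and on the potential ($f$ locally Hölder with $P(f)<\infty$) are exactly those required by Theorem 4.9 of \cite{sarig-2009}. That theorem produces a Gibbs state $\mu_f$ for $f$. Second, feed this output into Theorem 2.2.4 of \cite{MU}: on a topologically mixing BIP countable Markov shift, the existence of a Gibbs state for a locally Hölder continuous potential upgrades automatically to the existence of a \emph{unique} $\sigma$-invariant Gibbs state. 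Applying this to $\mu_f$ concludes the argument.

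If a more self-contained route were desired, I would work through the Ruelle--Perron--Frobenius transfer operator
$$(L_f\phi)(x)=\sum_{\sigma(y)=x}e^{f(y)}\phi(y).$$
The BIP property and topological mixing give enough uniform control on the combinatorics of cylinders that $L_f$ acts, and has a spectral gap, on a suitable Banach space of locally Hölder continuous functions with a Gurevich-type weighting. The assumption $P(f)<\infty$ together with $Z_1(f)<\infty$ (Theorem \ref{finite pressure}) ensures that $\lambda=e^{P(f)}$ is a simple leading eigenvalue of $L_f$, with a positive eigenfunction $h_f$ and a conformal eigenmeasure $\nu_f$ for $L_f^*$. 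Normalizing so that $\int h_f\,d\nu_f=1$, the measure $d\mu_f=h_f\,d\nu_f$ is then the sought shift-invariant Gibbs state, and uniqueness follows from simplicity of the top eigenvalue.

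The main obstacle, already fully handled in \cite{MU} and \cite{sarig-2009}, is the non-compactness of the alphabet. In the finite-alphabet case one would invoke the classical Ruelle operator theory on $C(\Sigma^+)$, but here one must choose the weighted Hölder spaces carefully so that $L_f$ remains bounded and quasi-compact. BIP plays the decisive role: it ensures that cylinders near any symbol are accessible from a fixed finite set of symbols in a uniformly bounded number of steps, which is what replaces the classical Doeblin-type recurrence used in finite-alphabet spectral arguments. Once this framework is in place, the existence and uniqueness of the Gibbs state follow by arguments parallel to the compact case; for the present paper we simply cite the packaged statements and proceed.
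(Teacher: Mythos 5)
Your first paragraph is exactly the argument the paper intends: the theorem is stated as a summary of the two cited results, with Sarig \cite[Thm 4.9]{sarig-2009} supplying existence of a Gibbs state under the hypotheses (topologically mixing, BIP, $f$ locally H\"older, $P(f)<\infty$) and Mauldin--Urbanski \cite[Thm 2.2.4]{MU} upgrading this to a unique shift-invariant Gibbs state. The transfer-operator sketch in your remaining paragraphs is a reasonable outline of how those sources prove their results, but it is not needed here; the citation chain is the proof.
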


The {\em transfer operator} is a central tool in the Thermodynamic Formalism.
Recall that the {\em transfer operator} $\mathcal{L}_{f}\colon C^{b}(\Sigma^{+})\to C^{b}(\Sigma^{+})$
of a locally H\"older continuous function $f$ over $\Sigma^{+}$
is defined by
\[
\mathcal{L}_{f}(g)(x) ={\displaystyle \sum_{y\in\sigma^{-1}(x)}e^{f(y)}g(y)}\qquad\text{ for\ all}\ x\in\Sigma^{+}.
\]
If $(\Sigma^+,\sigma)$ is topologically mixing and has the BIP property, 
$\nu$ is a  Borel probability measure for $\Sigma^+$ and
$(\mathcal L_f)^*(\nu)=e^{P(f)}\nu$ (where $(\mathcal L_f)^*$ is the dual of transfer operator),
then $\nu$ is a Gibbs state for $f$, see Mauldin-Urbanski \cite[Theorem 2.3.3]{MU}.

A  $\sigma$-invariant Borel probability measure $m$ on $\Sigma^+$ is said to be an {\em equilibrium measure} for 
a locally H\"older continuous function $g:\Sigma^+\to\mathbb R$ if 
$$P(g)=h_\sigma(m)+\int_{\Sigma^+} g \ dm$$
where $h_\sigma(m)$ is the measure-theoretic entropy of $\sigma$ with respect to the measure $m$. 
Mauldin and Urbanski \cite{MU}  give a criterion guaranteeing the existence of a unique
equilibrum state. 

\begin{theorem} {\rm (Mauldin-Urbanski \cite[Thm. 2.2.9]{MU})}
\label{uniqeq}
Suppose that  $(\Sigma^+,\sigma)$ is a one-sided countable Markov shift which has BIP and is topologically mixing.
If $f$ is locally H\"older continuous, $\nu_f$ is a shift invariant Gibbs state for $f$ and $-\int f\ d\nu_f<+\infty$, then
$\nu_f$ is the unique equilibrium measure for $f$.
\end{theorem}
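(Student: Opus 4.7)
My plan is to prove the two assertions of the theorem separately: first, that the Gibbs state $\nu_f$ satisfies the equilibrium identity $P(f)=h_\sigma(\nu_f)+\int f\,d\nu_f$, and second, that no other shift-invariant probability measure does.

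For existence, I will compute $h_\sigma(\nu_f)$ using the partition $\alpha$ of $\Sigma^+$ into $1$-cylinders, whose $n$-fold refinement $\alpha^n=\bigvee_{i=0}^{n-1}\sigma^{-i}\alpha$ is the partition into $n$-cylinders. The Gibbs property, taken in the form provided by Theorem \ref{uniqgibbs} with normalizing constant $C=P(f)$, gives a two-sided bound
$$\log B^{-1}\;\le\;\log\nu_f([a_1,\ldots,a_n])-S_nf(x)+nP(f)\;\le\;\log B$$
for every cylinder $[a_1,\ldots,a_n]$ and every $x$ in it. Summing $-\nu_f([a_1,\ldots,a_n])\log\nu_f([a_1,\ldots,a_n])$ over all $n$-cylinders and dividing by $n$, I get
$$\frac{1}{n}H_{\nu_f}(\alpha^n)\;=\;P(f)\;-\;\frac{1}{n}\int S_nf\,d\nu_f\;+\;O(1/n).$$
The hypothesis $-\int f\,d\nu_f<+\infty$ lets me apply the Birkhoff ergodic theorem (after ergodic decomposition) to conclude $\frac{1}{n}\int S_nf\,d\nu_f\to \int f\,d\nu_f$, and in the limit I obtain the equilibrium identity $h_\sigma(\nu_f)=P(f)-\int f\,d\nu_f$, after checking that $\alpha$ is a generator and that $H_{\nu_f}(\alpha)<\infty$ so that Kolmogorov-Sinai applies.

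For uniqueness, let $m$ be any shift-invariant equilibrium measure. Passing to ergodic components I may assume $m$ is ergodic. The Shannon-McMillan-Breiman theorem gives $-\frac{1}{n}\log m(\alpha^n(x))\to h_\sigma(m)$ for $m$-a.e. $x$, while Birkhoff gives $\frac{1}{n}S_nf(x)\to \int f\,dm$. Comparing this with the Gibbs behavior of $\nu_f$, namely $-\frac{1}{n}\log \nu_f(\alpha^n(x))=P(f)-\frac{1}{n}S_nf(x)+O(1/n)$, and using the equilibrium identity $h_\sigma(m)+\int f\,dm=P(f)$ for $m$, I deduce that the Radon-Nikodym derivatives of $m$ on $\alpha^n$ with respect to $\nu_f$ have subexponential growth, forcing $m$ to be absolutely continuous with respect to $\nu_f$. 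Ergodicity of $\nu_f$ (which itself follows from the Gibbs property and topological mixing) then yields $m=\nu_f$.

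The main obstacle is the entropy computation in the countable-alphabet setting. A priori the partition $\alpha$ into $1$-cylinders can carry infinite entropy, and the ergodic sums $\frac{1}{n}S_nf$ need not be uniformly integrable without further hypotheses. The condition $-\int f\,d\nu_f<+\infty$ is exactly what is needed to control these issues: combined with the Gibbs bounds and the BIP property, it both forces $H_{\nu_f}(\alpha)<\infty$ and legitimizes the passage to the limit in the entropy formula. Verifying this carefully, and in particular justifying the exchange of limit and summation over the infinite alphabet, is the technical heart of the argument.
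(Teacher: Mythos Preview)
The paper does not prove this statement: it is quoted from Mauldin--Urbanski as a background result, with no argument given. So there is no proof in the paper to compare your approach against.

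Your existence argument is essentially correct. One minor point: $\frac{1}{n}\int S_nf\,d\nu_f=\int f\,d\nu_f$ follows immediately from shift-invariance of $\nu_f$; no appeal to Birkhoff or ergodic decomposition is needed.

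The uniqueness argument, however, has a genuine gap. The step ``subexponential growth of the Radon--Nikodym derivatives $m(\alpha^n(x))/\nu_f(\alpha^n(x))$ forces $m\ll\nu_f$'' is unjustified and, as written, essentially circular. Via Shannon--McMillan--Breiman for $m$, the Gibbs bound for $\nu_f$, and Birkhoff for $m$, the condition $\frac{1}{n}\log\frac{m(\alpha^n(x))}{\nu_f(\alpha^n(x))}\to 0$ $m$-a.e.\ is \emph{equivalent} to $h_\sigma(m)+\int f\,dm=P(f)$, i.e.\ to $m$ being an equilibrium state. So you are invoking exactly the hypothesis you started with, not deducing absolute continuity from it. What is actually needed is the stronger fact that the relative entropies $\sum_{C\in\alpha^n}m(C)\log\frac{m(C)}{\nu_f(C)}$ are uniformly \emph{bounded} in $n$ (not merely $o(n)$); boundedness of these KL divergences along a generating filtration does force $m\ll\nu_f$. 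Obtaining that bound from the Gibbs inequality requires $H_m(\alpha^n)\ge n\,h_\sigma(m)$, hence $h_\sigma(m,\alpha)=h_\sigma(m)$, hence $H_m(\alpha)<\infty$ for the \emph{arbitrary} equilibrium state $m$ --- a nontrivial point in the countable-alphabet setting that your sketch does not address (you only verify it for $\nu_f$).
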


We say that $\{g_u:\Sigma^+\to \mathbb R\}_{u\in M}$ is a {\em real analytic family} if $M$ is a real analytic
manifold and for all $x\in \Sigma^+$, $u\to g_u(x)$ is a real analytic function on $M$. 
Mauldin and Urbanski \cite[Thm. 2.6.12,\ Prop. 2.6.13\ and\  2.6.14]{MU},
see also Sarig (\cite[Cor. 4]{sarig-2003},\cite[Thm 5.10\ and\  5.13]{sarig-2009}),
prove real analyticity properties of the pressure function and evaluate its derivatives.
We summarize their results in Theorem \ref{pressure analytic}.
Here the {\em variance} of a locally H\"older continuous function $f:\Sigma^+\to\mathbb R$ with respect 
to a probability measure $m$ on $\Sigma^+$ is given by
$${\rm Var}(f,m)=\lim_{n\to\infty}\frac{1}{n}\int_{\Sigma^+}S_n\Big(\big(f -\int_{\Sigma^+} f\ dm\big)^2\Big)\ dm.$$

\begin{theorem}{\rm (Mauldin-Urbanski, Sarig)}
\label{pressure analytic}
Suppose that  $(\Sigma^+,\sigma)$ is a one-sided countable Markov shift which has BIP and is topologically mixing.
If $\{g_u:\Sigma^+\to \mathbb R\}_{u\in M}$ is a real analytic family of locally H\"older continuous functions such
that $P(g_u)<\infty$ for all $u$, then $u\to P(g_u)$ is real analytic.

Moreover, if $v\in T_{u_0}M$ and there exists a neighborhood $U$ of $u_0$ in $M$ such that  \hbox{$-\int_{\Sigma^+} g_u dm_{g_{u_0}}<\infty$}
if $u\in U$, then
$$D_vP(g_u)=\int_{\Sigma^+} D_v(g_u(x))\  dm_{g_{u_0}}$$
and 
$$D_{v}^2P(g_u)={\rm Var}(D_vg_u,m_{g_u{_0}})+\int_{\Sigma^+} D_v^2g_u dm_{g_{u_0}}$$
where $m_{g_{u_0}}$ is the unique equilibrium state for $g_{u_0}$.
\end{theorem}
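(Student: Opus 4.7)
The plan is to derive all three statements from the spectral theory of the transfer operator $\mathcal L_{g_u}$. Under the standing hypotheses (topologically mixing with BIP, locally H\"older, finite Gurevich pressure), the Ruelle--Perron--Frobenius theorem of Sarig and Mauldin--Urbanski gives that $\mathcal L_{g_u}$ acts boundedly on a suitable Banach space $\mathcal B$ of H\"older functions, and its leading eigenvalue is $\lambda(u):=e^{P(g_u)}$, which is simple and isolated from the remainder of the spectrum. The associated right eigenfunction $h_u\in\mathcal B$ and left eigenmeasure $\nu_u$ (the Gibbs state of Theorem \ref{uniqgibbs}, normalized so that $\nu_u(h_u)=1$) combine to give the equilibrium state $m_{g_u}=h_u\nu_u$.

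For the analyticity of $u\mapsto P(g_u)$, the key step is to verify that $u\mapsto \mathcal L_{g_u}$ is a real analytic family of bounded operators on $\mathcal B$. Pointwise real analyticity of $g_u$, together with uniform local H\"older control and the summability $Z_1(g_u)<\infty$ from Theorem \ref{finite pressure}, yield analytic operator-valued dependence, whose $k$-th derivative at $u_0$ in direction $v$ has the form $\mathcal L_{g_{u_0}}\big((D_v^k g_u|_{u_0})\,\cdot\big)$ (plus lower-order terms coming from the chain rule). Kato's analytic perturbation theory then applies to the simple isolated eigenvalue $\lambda(u)$ and yields analyticity of $\lambda(u)$, and hence of $P(g_u)=\log\lambda(u)$.

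For the first derivative formula I would invoke the standard first-order perturbation identity $\lambda'(u_0)v = \nu_{u_0}\big((D_v\mathcal L_{g_u}|_{u_0})\,h_{u_0}\big)$; substituting $D_v\mathcal L_{g_u}=\mathcal L_{g_u}\big((D_v g_u)\,\cdot\big)$, dividing through by $\lambda(u_0)$, and using $d m_{g_{u_0}}=h_{u_0}\,d\nu_{u_0}$, produces the displayed integral. The second derivative requires the classical second-order perturbation formula in which the non-leading spectrum contributes through the reduced resolvent of $\lambda(u_0)I-\mathcal L_{g_{u_0}}$ on the complement of the leading eigenspace; expanding this resolvent as a Neumann series and reorganizing via Abel summation identifies the contribution with the asymptotic variance of $D_vg_u|_{u_0}$ with respect to $m_{g_{u_0}}$, giving the stated $\operatorname{Var}$-plus-second-moment formula.

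The main obstacle, and where the hypotheses really bite, is in establishing the spectral gap and analytic operator dependence in the noncompact setting where the alphabet $\mathcal A$ is infinite; the BIP property is essential, since it underwrites a Doeblin--Fortet type inequality which guarantees a genuine gap between $\lambda(u)$ and the essential spectrum of $\mathcal L_{g_u}$. Since all of these steps are carried out in precisely the generality we need in Mauldin--Urbanski \cite[Thm.~2.6.12, Prop.~2.6.13--2.6.14]{MU} and Sarig \cite[Cor.~4]{sarig-2003}, \cite[Thm.~5.10,\,5.13]{sarig-2009}, the proof ultimately reduces to checking that our analytic family $\{g_u\}$ satisfies the hypotheses of those theorems and quoting the conclusions.
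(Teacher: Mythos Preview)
The paper does not give a proof of this theorem; it is stated as a background result and simply attributed to Mauldin--Urbanski \cite[Thm.~2.6.12, Prop.~2.6.13 and 2.6.14]{MU} and Sarig \cite[Cor.~4]{sarig-2003}, \cite[Thm.~5.10 and 5.13]{sarig-2009}. Your proposal ultimately lands in the same place---your final paragraph explicitly reduces the argument to invoking exactly these references---so in that sense you are aligned with the paper.

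The additional content you provide is a reasonable high-level sketch of the spectral-gap/analytic-perturbation machinery that actually underlies those cited theorems (simple isolated leading eigenvalue for $\mathcal L_{g_u}$, Kato perturbation for analyticity of $\lambda(u)=e^{P(g_u)}$, first- and second-order eigenvalue perturbation formulas identified with the integral and variance expressions). This is more than the paper offers, and it is broadly accurate as an outline, though making the operator-analyticity and spectral-gap steps precise in the countable-alphabet setting is exactly the nontrivial work done in \cite{MU} and \cite{sarig-2009}; your sketch correctly flags BIP as the crucial hypothesis there. Since the paper treats the statement as a black box, there is no discrepancy to report.
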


\subsection{The Stadlbauer-Ledrappier-Sarig coding}
Stadlbauer \cite{stadlbauer} and Ledrappier-Sarig \cite{LS} describe a one-sided countable 
Markov shift $(\Sigma^+,\sigma)$
with alphabet $\mathcal A$ which encodes the recurrent portion of the geodesic flow on $T^1(\mathbb H^2/\Gamma)$. 
In this section, we will sketch the construction of this coding and recall its crucial properties.

They begin with the classical coding of a free group, as described by Bowen and Series \cite{bowen-series}.
One begins  with a fundamental domain $D_0$ for  a free  convex cocompact Fuchsian group $\Gamma$, containing the origin  in the Poincar\'e
disk model, all of whose vertices lie in $\partial\mathbb H^2$,
so that the set $\mathcal S$ of face pairings  of $D_0$ is a minimal symmetric generating set for $\Gamma$. 
One then labels any translate $\gamma(D_0)$ by the group element $\gamma$. Any geodesic ray $r_z$  beginning at the origin
and ending at $z\in\Lambda(\Gamma)$
passes through an infinite sequence of translates, so we get a sequence
$c(z)=(\gamma_k)_{k\in\mathbb N}$. One may then turn this into an infinite sequence in $\mathcal S$ by considering
$b(z)=(\gamma_k\gamma_{k-1}^{-1})_{k\in\mathbb N}$ (where we adopt the convention that $\gamma_0=id$.) 
If $\Gamma$ is convex cocompact, this produces a well behaved one-sided Markov shift $(\Sigma_{BS}^+ ,\sigma)$ 
with finite alphabet $\mathcal S$.
The obvious map $\omega:\Sigma_{BS}^+\to\Lambda(\Gamma)$ which takes $b(z)$ to $z$
is  H\"older and $(\Sigma_{BS}^+,\sigma)$ encodes the recurrent portion of
the geodesic flow of $\mathbb H^2/\Gamma$.

If one attempts to implement this procedure when
 $\Gamma$ is not convex cocompact, then one must  omit all geodesic rays which end at a parabolic
fixed point and there is no natural way to do this from a coding perspective. Moreover, if one simply restricts $\omega$ to the allowable words
then $\omega$ will not be H\"older in this case. (To see that $\omega$ will not be H\"older, choose $x,y\in\Sigma_{BS}^+$,
so that $x_i=y_i=\alpha$ for all $1\le i\le n$, where $\alpha$ is a parabolic face-pairing,
and $x_{n+1}\ne y_{n+1}$, then $d_{\Sigma_{BS}^+}(x,y)=e^{-n}$,
while $d_{\partial\mathbb H^2}(\omega(x),\omega(y))$ is comparable to $\frac{1}{n^2}$.)

Roughly, the Stadlbauer-Ledrappier-Sarig begins with $c(z)=(\gamma_k)$ and clumps together all terms in $b(z)=(\gamma_k\gamma_{k-1}^{-1})$
which lie in a subword which
is a high power of a parabolic element. One must then append to our alphabet all powers of minimal word length parabolic elements
and and disallow infinite words beginning or ending in infinitely repeating parabolic elements. 
When $\Gamma$ is geometrically finite, but not co-finite area, Dal'bo and Peign\'e \cite{dalbo-peigne} implemented this process to
powerful effect for geometrically finite Fuchsian groups with infinite area quotients. However, when $\Gamma$ is co-finite area, 
the actual description is more intricate. The states Stadlbauer-Ledrappier-Sarig use record a finite amount of information about both the past
and the future of the trajectory.

Let $\mathcal C$ be the collection of all freely reduced words in $\mathcal S$ which have minimal word  length in their conjugacy class
and generate a maximal parabolic subgroup of $\Gamma$.
Notice that the minimal word length representative of a conjugacy class of $\alpha$ is unique up to cyclic permutation.
(One may in fact choose $D_0$ so that all but one pair of parabolic elements of $\mathcal C$ is conjugate to a face-pairing.)
Since there are only finitely many conjugacy classes of maximal parabolic subgroups of  $\Gamma$, $\mathcal C$ is finite.
They then choose a sufficiently large even number $2N$ so that the length of every element of $\mathcal C$ 
divides $2N$ and let $\mathcal C^*$ be the collection of powers of elements of $\mathcal C$ of length 
exactly $2N$.  (One may assume that two elements of $\mathcal C^*$ share a subword of length at least 2 if and only if
they are cyclic permutations of one another.)

Let $\mathcal A_1$ be the set of all strings $(b_0,b_1,\ldots,b_{2N})$ in $\mathcal S$ so that
$b_0b_1\cdots b_{2N}$ is freely reduced in $\mathcal S$ and so that
neither $b_1b_2\cdots b_{2N}$ or $b_0b_1\cdots b_{2N-1}$ lies in $\mathcal C^*$.
Let $\mathcal A_2$ be the set of all freely reduced strings  of the form $(b,w^s,w_1,\cdots, w_{k-1}, c)$
where 
$w=w_1\ldots w_{2N}\in\mathcal C^*$, $b\in\mathcal S-\{w_{2N}\}$, $1\le k\le 2N$,
$s\ge 1$ and $c\in\mathcal S-\{w_k\}$.

Let $\mathcal A=\mathcal A_1\cup\mathcal A_2$ and define functions
$$r:\mathcal A\to\mathbb N\qquad\mathrm{and}\qquad G:\mathcal A\to \Gamma$$
by letting $r(a)=1$ if $a\in\mathcal A_1$ and $r(b,w^s, w_1,\ldots ,w_{k-1}, c)=s+1$ otherwise.
If  \hbox{$a=(b_0,b_1,\ldots,b_{2N})\in\mathcal A_1$}, then $G(a)=b_1$. If \hbox{$a=(b,w^s,w_1\cdots w_{k-1}, c)$},
then let $G(a)=w^{s-1}w_1\cdots w_{k+1}$. Notice that, by construction, if $n\in\mathbb N$, then
$$\#(r^{-1}(n))\le\#(\mathcal C^*)\left(\#(\mathcal S)^{2}\right)(2N).$$
So, $r^{-1}(n)$ is always non-empty and there exists $D$ so that
$r^{-1}(n)$ has size at most $D$ for all $n\in\mathbb N$, i.e. there are at most $D$ states associated to each positive integer.

Given a geodesic ray $r_z$  beginning at the origin and ending at a point $z$ in the set $\Lambda_c(\Gamma)$ of points
in the limit set which are not parabolic fixed points, 
let $c(z)=(\gamma_k)_{k\in\mathbb N}$ be the sequence
of elements of $\Gamma$ which record the translates of $D_0$ which $r_z$ passes through. 
Let $b(z)=(b_k(z))=(\gamma_k\gamma_{k-1}^{-1})\in \mathcal S^{\mathbb N}$.
We then associate to $r_z$ a finite collection of infinite words in $\mathcal S^{\mathbb N\cup\{0\}}$, by allowing $b_0$ to be any element of
$\mathcal S$, so that $b_0b_1\cdots b_{2N}$ does not lie in $\mathcal C^*$. 

Suppose we have a word $(b_k)_{k\in\mathbb N\cup \{0\}}$ arising from the previous construction.
If \hbox{$(b_0,b_1,\ldots,b_{2N})\in\mathcal A_1$}, then let $x_1=(b_0,b_1,\ldots,b_{2N})$ and shift $(b_i)$
rightward by 1 to compute $x_2$. If not, let $x_1$ be the unique sub-string of $b_0b_1\dots b_k\cdots$ which
begins at $b_0$ and is an element of $\mathcal A_2$. Then, 
$x_1=(b_0,w^s,w_1\cdots w_{k-1},b_v)$ for some $w\in\mathcal C^*$, $s\in\mathbb N$
and $v= 2Ns+k-1$. In this case, we shift $(b_i)$ rightward by $2N(s-1)+k+1$ to compute $x_2$. 
One then simply proceeds iteratively.
By construction, if $x_i\in\mathcal A_2$, then $x_{i+1}$ must lie in $\mathcal A_1$.

\medskip\noindent
{\bf Examples:} If $\Gamma$ uniformizes a once-punctured torus, then $\mathcal S=\{\alpha,\alpha^{-1},\beta,\beta^{-1}\}$ is
a mimimal symmetric generating set for $\Gamma$ and 
$$\mathcal C=\{ \alpha\beta\alpha^{-1}\beta^{-1},\beta\alpha^{-1}\beta^{-1}\alpha,\alpha^{-1}\beta^{-1}\alpha\beta,
\beta^{-1}\alpha\beta\alpha^{-1}, \beta\alpha\beta^{-1}\alpha^{-1},  \alpha\beta^{-1}\alpha^{-1}\beta, \beta^{-1}\alpha^{-1} \beta\alpha,
\alpha^{-1} \beta\alpha\beta^{-1}\}.$$
If $\Gamma$ uniformizes a four times punctured sphere, then one may choose $D_0$ so that
$\mathcal S=\{\alpha,\alpha^{-1},\beta,\beta^{-1},\gamma,\gamma^{-1}\}$ and
$$\mathcal C=\{ \alpha,\alpha^{-1},\beta,\beta^{-1},\gamma,\gamma^{-1},\alpha\beta\gamma,\beta\gamma\alpha,\gamma\alpha\beta,
\gamma^{-1}\beta^{-1}\alpha^{-1},\beta^{-1}\alpha^{-1}\gamma^{-1},\alpha^{-1}\gamma^{-1}\beta^{-1}\}.$$

The following proposition encodes crucial properties of the coding.

\begin{proposition}
{\rm (Ledrappier-Sarig \cite[Lemma 2.1]{LS}, Stadlbauer \cite{stadlbauer})}
Suppose that $\mathbb H^2/\Gamma$ is a finite area hyperbolic surface, then
$(\Sigma^+,\sigma)$ is topologically mixing, has the big images and pre-images property (BIP), and
there exists a locally H\"older continuous  finite-to-one map 
$$\omega:\Sigma^+\to \Lambda(\Gamma)$$
so that $\omega(x)=\lim (G(x_1)\cdots G(x_n))(0)$ and
$\omega(x) = G(x_1) \omega (\sigma(x))$.
Moreover, if $\gamma$ is a hyperbolic element of $\Gamma$, then there exists $x\in\mathrm{Fix}^n$,  
for some $n\in\mathbb N$, unique up to cyclic permutation,
so that $\gamma$ is conjugate to $G(x_1)\cdots G(x_n)$.
\end{proposition}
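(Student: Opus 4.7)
The plan is to verify the four assertions in the order they are stated, keeping in mind that all the real content is packed into (i)~the finite structure of $\mathcal A_1$, (ii)~the fact that transitions in $\Sigma^+$ only depend on the bookkeeping letters $b$ and $c$ of the $\mathcal A_2$-states, and (iii)~classical convergence properties of geodesics in the $\Gamma$-tiling.

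\textbf{Topological mixing and BIP.} These are essentially combinatorial once the transition rule in $\Sigma^+$ is unpacked. An admissible transition $a\to a'$ in $\mathcal A$ is determined by the way in which the tails of $a$ and the heads of $a'$ agree under the shift on the underlying $\mathcal S$-sequence. For BIP I would take $\mathcal B=\mathcal A_1$, which is finite because its elements are freely reduced words of length $2N+1$ in the finite set $\mathcal S$; every $a\in\mathcal A$ can be followed or preceded by some $\mathcal A_1$-state, since one may always continue a geodesic coding by a non-parabolic segment of length $2N+1$. For topological mixing, given $a,b\in\mathcal A$ one interpolates by elements of $\mathcal A_1$; non-elementarity of $\Gamma$ and the freeness of $\mathcal S$-reduction guarantee that connecting admissible words of all sufficiently large lengths exist.

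\textbf{Existence of $\omega$ and its geometric properties.} I would take $\omega(x)=\lim_n G(x_1)\cdots G(x_n)(0)$ as the definition. The underlying $\mathcal S$-word of $G(x_1)\cdots G(x_n)$ stays freely reduced by construction, so its length in $\mathcal S$ tends to infinity; hence the orbit points $G(x_1)\cdots G(x_n)(0)$ escape every compact set in $\mathbb H^2$ along a $\Gamma$-quasi-geodesic ray, and converge to a unique boundary point which must lie in $\Lambda(\Gamma)$. The identity $\omega(x)=G(x_1)\omega(\sigma(x))$ is immediate from the definition. Finite-to-one-ness follows because $z\in\Lambda(\Gamma)$ determines the Bowen--Series sequence $c(z)$ up to the bounded initial choice of $b_0\in\mathcal S$, and this sequence uniquely determines $x\in\Sigma^+$ by the greedy parsing into $\mathcal A_1$- and $\mathcal A_2$-symbols.

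\textbf{The main obstacle: local H\"older continuity.} This is where the clumping into $\mathcal A_2$-states does its essential work, and it is the only step that is not essentially formal. For the naive Bowen--Series map on a cusped surface, two points whose coded sequences agree through a high power of a parabolic face pairing $\alpha^n$ have underlying geodesic endpoints separated only by a quantity of order $1/n^2$, destroying H\"older regularity. After clumping, agreement of the first $n$ $\Sigma^+$-symbols forces agreement of a much longer initial segment of the underlying $\mathcal S$-sequence whenever deep parabolic excursions occur, because each $\mathcal A_2$-symbol encodes a whole parabolic excursion in one letter. Combined with standard horoball-excursion estimates in $\mathbb H^2/\Gamma$ (controlling the diameter of the shadow of a cusp excursion in terms of its depth), this precisely compensates the quadratic loss at the tip of a horocycle and yields an estimate $|\omega(x)-\omega(y)|\le C\theta^{n}$ on cylinders $[x_1,\dots,x_n]$. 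This is where the specific choice of $2N$ and of $\mathcal C^*$ enters and where I expect the bulk of the technical work.

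\textbf{Closed geodesics.} Given a hyperbolic $\gamma\in\Gamma$, project its axis to a closed geodesic $\bar c_\gamma$ in $\mathbb H^2/\Gamma$, lift $\bar c_\gamma$ to an infinite geodesic in $\mathbb H^2$ and read off the bi-infinite sequence of $\Gamma$-translates of $D_0$ it traverses; one period of this sequence gives a periodic $c$-sequence in $\Gamma$ and, via the deterministic $\mathcal A_1/\mathcal A_2$-parsing, a periodic $x\in\mathrm{Fix}^n$ with $G(x_1)\cdots G(x_n)$ conjugate to $\gamma$. Uniqueness up to cyclic permutation then follows from uniqueness of $\bar c_\gamma$ together with the fact that the only ambiguity in passing from the geodesic to the symbolic representative is the starting fundamental domain, which corresponds exactly to a cyclic shift of $x$.
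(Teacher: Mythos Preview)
Your proposal is consistent with the paper's treatment, but you should be aware that the paper does not actually prove this proposition: it is stated as a result of Ledrappier--Sarig and Stadlbauer, and the only commentary the paper offers is the remark immediately following the statement. That remark agrees with your outline on every point where it says anything at all: BIP holds because every element of $\mathcal A$ can be preceded and succeeded by an element of the finite set $\mathcal A_1$; topological mixing is ``similarly easy to see directly from the definition''; and the paper explicitly identifies local H\"older continuity of $\omega$ as ``the main claim of this proposition,'' deferring the argument to the cited references. Your identification of the H\"older estimate as the crux, and your heuristic explanation of why the clumping of parabolic excursions into single $\mathcal A_2$-letters repairs the $1/n^2$ failure of the naive Bowen--Series coding, is exactly the right intuition, though the paper does not carry out the details either.

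One small point: for the convergence of $G(x_1)\cdots G(x_n)(0)$ you invoke a quasi-geodesic property, whereas the paper (in the subsequent Lemma on conical approach) instead records the slightly sharper fact that these orbit points stay within a uniform distance $L$ of the geodesic ray $\overrightarrow{y\,\omega(x)}$, and sketches a direct proof of this using the cusp decomposition of $\mathbb H^2/\Gamma$. Either route suffices for the existence and equivariance of $\omega$.
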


Notice that every element of $\mathcal A$ can be preceded and succeeded by some element of $\mathcal A_1$, so
$(\Sigma^+,\sigma)$ clearly has (BIP). The topological mixing property is similarly easy to see directly from the definition,
so the main claim of this proposition is that $\omega$ is locally H\"older continuous.

Another crucial property of the coding is that the translates of the origin associated to the Stadbauer-Ledrappier-Sarig coding
approach points in the limit set conically  (see property (1) on
page 15 in Ledrappier-Sarig \cite{LS}). 

\begin{lemma}{\rm (Ledrappier-Sarig \cite[Property (1) on page 15]{LS})}
\label{conicalapproach}
Given $y\in\mathbb H^2$,
there exists $L>0$  so that if $x\in\Sigma^+$ and $n\in\mathbb N$, then
$$d(G(x_1)G(x_2)\cdots G(x_n)(0),\overrightarrow{y\omega(x)})\le L.$$
\end{lemma}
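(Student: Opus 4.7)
The strategy is to compare the Stadlbauer--Ledrappier--Sarig (SLS) trajectory $\{G(x_1)\cdots G(x_n)(0)\}$ with the underlying Bowen--Series trajectory and then exploit the thick-thin decomposition of $\mathbb H^2/\Gamma$. Writing $c(z)=(\gamma_m)_{m\ge 0}$ for the Bowen--Series sequence associated to $z=\omega(x)$, the geodesic ray from $0$ to $z$ passes through $\gamma_m(D_0)$ for every $m$ by construction, so $\gamma_m(0)\in\gamma_m(D_0)$ always lies in a translate of $D_0$ that the geodesic actually enters. The only obstruction to uniform closeness of the full Bowen--Series trajectory to the geodesic is therefore the diameter of $D_0$, which is infinite at the cusps.

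Next, I would set up the index correspondence between the two codings. Unpacking the recursive construction of $(x_i)$ from $(b_k)$ together with the definition of $G$, one obtains a strictly increasing sequence $0=m_0<m_1<m_2<\cdots$ satisfying
\[
G(x_1)G(x_2)\cdots G(x_i) \;=\; \gamma_{m_i}
\]
for every $i$, with $m_i-m_{i-1}=1$ when $x_i\in\mathcal A_1$ and $m_i-m_{i-1}=2N(s-1)+k+1$ when $x_i=(b,w^s,w_1,\ldots,w_{k-1},c)\in\mathcal A_2$. Hence the SLS orbit of the origin is precisely the subsequence of the Bowen--Series orbit indexed by the $m_i$, obtained by skipping exactly those intermediate Bowen--Series times that fall strictly inside a maximal parabolic block in $\mathcal C^*$.

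Now I would apply the Margulis lemma to choose disjoint $\Gamma$-equivariant horoball neighborhoods $\{H_j\}$ of all parabolic fixed points in $\partial\mathbb H^2$ and set $K=\overline{D_0\setminus\bigcup_j H_j}$, a compact subset of $\mathbb H^2$. The geometric heart of the argument is to show that for each $i$ the translate $\gamma_{m_i}(D_0)$ meets the geodesic $\overrightarrow{0\omega(x)}$ inside its thick part $\gamma_{m_i}(K)$. When $x_i\in\mathcal A_1$, the definition of $\mathcal A_1$ forbids the next $2N$ letters from completing a parabolic power in $\mathcal C^*$, so the geodesic is not currently executing a deep cusp excursion at position $m_i$. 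When $x_i\in\mathcal A_2$, the index $m_i$ is by construction the position immediately after the encoded parabolic excursion has finished, so the geodesic has just exited the horoball about the associated parabolic fixed point. In either case $\gamma_{m_i}(0)$ lies within $\operatorname{diam}(K)$ of the geodesic ray from $0$ to $\omega(x)$; replacing $0$ by $y$ costs an additional $d(0,y)$, yielding a uniform bound $L=L(y)$.

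The main obstacle is the geometric claim in the third paragraph: that the combinatorial excisions executed by states in $\mathcal A_2$ correspond, up to a bounded number of Bowen--Series steps, to genuine cusp excursions of the geodesic into and out of the horoball system. This is standard horoball geometry --- a Bowen--Series block $w^s$ with $w\in\mathcal C^*$ corresponds to a geodesic excursion of depth about $\log s$ into the horoball at the associated parabolic fixed point, with entry and exit controlled within a bounded number of fundamental-domain crossings of the block's endpoints --- but a quantitative match with the precise combinatorial definition of $\mathcal A_2$ requires some care. Once this matching is in hand, the uniform bound by $\operatorname{diam}(K)+d(0,y)$ completes the proof.
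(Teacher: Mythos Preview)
Your approach is essentially that of the paper: choose a compact core $K\subset D_0$ complementary to cusp neighborhoods, use that a geodesic segment confined to the thin part forces the corresponding Bowen--Series subword to be a subword of a parabolic power, and conclude that the SLS orbit points stay boundedly close to the ray.

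The one refinement in the paper is that it does \emph{not} try to show the geodesic meets $\gamma_{m_i}(K)$ at the exact index $m_i$. Instead it looks at the last $2N+1$ Bowen--Series letters $b_r,\ldots,b_{r+2N}$ of the state $x_n$ and argues that the geodesic must meet \emph{some} $\gamma_j(K)$ with $r\le j\le r+2N$: otherwise the length-$2N$ subwords $b_r\cdots b_{r+2N-1}$ and $b_{r+1}\cdots b_{r+2N}$ would both lie in $\mathcal C^*$, which the definitions of $\mathcal A_1$ and $\mathcal A_2$ forbid. Since $G(x_1)\cdots G(x_n)(0)$ differs from any such $\gamma_j(0)$ by a word of length at most $2N$ in $\mathcal S$, one adds a term
\[
R=\max\big\{d\big(0,(s_1\cdots s_p)(0)\big):s_i\in\mathcal S,\ p\le 2N\big\}
\]
to the final constant, giving $L=R+\mathrm{diam}(K)$. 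This is precisely the ``bounded number of Bowen--Series steps'' you anticipate in your last paragraph; your stated bound $\mathrm{diam}(K)+d(0,y)$ should carry this extra $R$, and your stronger claim that $\gamma_{m_i}(K)$ itself is hit---while plausible with a suitable choice of horoballs---is not needed once the window argument is in place.
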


Since the proof of Lemma \ref{conicalapproach} appears in the middle of a rather technical discussion in \cite{LS}, we will
sketch a proof in our language. Choose a compact subset $\hat K$ of $\mathbb H^2/\Gamma$ so that its complement
is a collection of cusp regions bounded by curves which are images of horocycles in $\mathbb H^2$. 
Without loss of generality we may assume that $y$ is the origin in the Poincar\'e disk model for $\mathbb H^2$.
Notice that if the portion of 
$\overrightarrow{b\omega(x)}$ between $\gamma_s(D_0)$ and $\gamma_{s+t}(D_0)$ 
lies entirely in the complement of the pre-image of $\hat K$, and $t>s$, then $\gamma_{s+t}\gamma_s^{-1}$ is a subword of a 
power of an element in $\mathcal C$. Let $K$ be the intersection of the pre-image of $\hat K$ with $D_0$. 
Notice that we may assume that $y\in K$ (by perhaps enlarging $\hat K$).
Suppose the last $2N+1$ letters of $x_n$ are $b_r\cdots b_{r+2N}$, then 
$\overrightarrow{0\omega(x)}$ intersects one of $\gamma_r(K),\ \ldots, \ \gamma_{r+2N}(K)$
(since otherwise $b_r\cdots b_{r+2N-1}$ or $b_{r+1}\cdots b_{r+2N+1}$ would lie in $\mathcal C^*$, which is disallowed). But then
$$d\big(G(x_1)\cdots G(x_n)(y),\overrightarrow{y\omega(x)}\big)\le R+\mathrm{diam}(K)$$
where
$$R=\max\Big\{d(y,(s_1\ldots s_p)(y))\ |\ s_i\in\mathcal S,\ p\in\{1,\ldots,2N\}\Big\}.$$

\section{Roof functions for quasifuchsian groups}
If $\rho\in QC(\Gamma)$, we define a {\em roof function} $\tau_\rho:\Sigma^+\to \mathbb R$ by setting
$$\tau_\rho(x)=B_{\xi_\rho(\omega(x))}(b_0,\rho(G(x_1))(b_0))$$
where $b_0=(0,0,1)$ and $B_z(x,y)$ is the Busemann function based at $z\in\partial\mathbb H^3$ which measures the signed distance
between the horoballs based at $z$ through $x$ and $y$. In the Poincar\'e upper half space model, we write
the Busemann function explicitly as
$$\hat B_z(p,q)=\log\left(\frac{|p-z|^2h(p)}{|q-z|^2h(q)}\right)$$
where $z\in\mathbb C\subset\partial \mathbb H^3$, $p,q\in \mathbb H^3$ and $h(p)$ is the Euclidean height
of $p$ above the complex plane and $\hat B_\infty(p,q)=\frac{h(p)}{h(q)}$.

It follows from the cocycle property of the Busemann function that 
$$S_m\tau_\rho(x)=\sum_{i=0}^{m-1}\tau_\rho(\sigma^i(x))=B_{\xi_\rho(\omega(x))}(b_0,\rho(G(x_1)\cdots G(x_m))(b_0)).$$
In particular,  if $x=(\overline{x_1,\ldots ,x_m})\in \Sigma^+$, then
$$S_m\tau_\rho(x)=\ell_\rho(G(x_1)\cdots G(x_m)).$$
We say that the roof function  $\tau_\rho$ is {\em eventually positive} if
there exists $C>0$ and $N\in\mathbb N$ so that if $n\ge N$ and $x\in\Sigma^+$, then 
$S_n\tau_\rho(x)\ge C$.  

The following lemma records crucial properties of our roof functions. It generalizes similar results of
Ledrappier-Sarig \cite[Lemma 2.2\  and \  3.1]{LS} in the Fuchsian setting.

\begin{proposition}
\label{roof analytic}
The family 
$\{\tau_\rho\}_{\rho\in QC(\Gamma)}$ of roof functions is a real analytic family of 
locally H\"older continuous, eventually positive functions.

Moreover, if $\rho\in QC(\Gamma)$, then there exists $C_\rho>0$ and $R_\rho>0$ so that 
$$2\log r(x_1)-C_\rho\le \tau_\rho(x)\le  2\log r(x_1)+C_\rho$$
and
$$\Big|S_n\tau_\rho(x)-d(b_0,G(x_1)\cdots G(x_n))(b_0))\Big|\le R_\rho$$
for all $x\in\Sigma^+$ and $n\in\mathbb N$.
\end{proposition}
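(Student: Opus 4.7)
The strategy is to read each claim off from the definition $\tau_\rho(x) = B_{\xi_\rho(\omega(x))}(b_0, \rho(G(x_1))(b_0))$ by combining regularity of the three ingredients $\xi_\rho$, $\omega$, $B$ with a case analysis according to whether $x_1 \in \mathcal A_1$ or $x_1 \in \mathcal A_2$. The two-dimensional analogue (Fuchsian $\rho$) is essentially Ledrappier-Sarig \cite{LS}, Lemmas 2.2 and 3.1; the task is to extend it to quasifuchsian $\rho$.

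For each fixed $x \in \Sigma^+$, the map $\rho \mapsto \tau_\rho(x)$ is a composition of the complex analytic map $\rho \mapsto \xi_\rho(\omega(x))$ (Lemma \ref{limit map}), the polynomial map $\rho \mapsto \rho(G(x_1))(b_0)$, and the real analytic Busemann function given by the explicit upper half-space formula, so the family is real analytic. For local H\"older continuity at fixed $\rho$, note that if $x_i = y_i$ for $1 \le i \le n$ then $G(x_1) = G(y_1)$, so the two Busemann functions in $\tau_\rho(x) - \tau_\rho(y)$ share their second argument. Using $\rho$-equivariance of $\xi_\rho$ to translate by $\rho(G(x_1))^{-1}$, the difference reduces to comparing $B_{\xi_\rho(\omega(\sigma x))}(\rho(G(x_1))^{-1}(b_0), b_0)$ with its $y$-analogue, and the desired exponential decay of $V_n(\tau_\rho)$ then follows from local H\"older continuity of $\omega$, bi-H\"older continuity of $\xi_\rho$ (Lemma \ref{limit map}), and smoothness of the Busemann function in its boundary variable.

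For the pointwise bound, if $x_1 \in \mathcal A_1$ then $r(x_1) = 1$ and $G(x_1) \in \mathcal S$, so $|\tau_\rho(x)|$ is uniformly bounded, matching $2\log r(x_1) = 0$. If $x_1 = (b, w^s, w_1, \ldots, w_{k-1}, c) \in \mathcal A_2$ then $G(x_1) = w^{s-1} w_1 \cdots w_{k+1}$ where $\rho(w)$ is a parabolic M\"obius transformation, and the classical distance formula $d(b_0, \rho(w)^{s-1}(b_0)) = 2\log(s-1) + O_\rho(1)$ combined with a single application of the conical estimate below produces $\tau_\rho(x) = 2\log r(x_1) + O_\rho(1)$, since $r(x_1) = s+1$ and the bounded-length prefix $b$ and suffix $w_1 \cdots w_{k+1}$ contribute $O_\rho(1)$. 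For the sum bound, the cocycle identity gives $S_n \tau_\rho(x) = B_{\xi_\rho(\omega(x))}(b_0, \rho(G(x_1) \cdots G(x_n))(b_0))$, and the hyperbolic inequality $|B_z(b_0, q) - d(b_0, q)| \le 2 d(q, \overrightarrow{b_0 z})$ reduces the claim to the uniform conical bound $d(\rho(G(x_1) \cdots G(x_n))(b_0), \overrightarrow{b_0\, \xi_\rho(\omega(x))}) \le L_\rho$. For Fuchsian $\rho$ this is Lemma \ref{conicalapproach}; for general $\rho \in QC(\Gamma)$, I would lift a $\rho$-equivariant bilipschitz homeomorphism $N \to N_\rho$ (produced by the Douady-Earle extension of a quasiconformal conjugating map) to a quasi-isometry $\mathbb H^2 \hookrightarrow \mathbb H^3$ whose boundary extension is $\xi_\rho$, and invoke the Morse lemma to push the two-dimensional conical bound into $\mathbb H^3$. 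Eventual positivity is then a direct consequence, since $S_n \tau_\rho(x) = d(b_0, \rho(G(x_1) \cdots G(x_n))(b_0)) + O_\rho(1)$ and the right-hand side tends to infinity uniformly in $x$ because the coding never produces free-group cancellation, so the word length of $G(x_1) \cdots G(x_n)$ in $\mathcal S$ grows without bound in $n$.

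The hard part is the uniform conical bound in the quasifuchsian setting with cusps. The orbit point $\rho(G(x_1) \cdots G(x_n))(b_0)$ will make arbitrarily deep excursions into Margulis cusp neighborhoods, and one has to verify that the Morse constant $L_\rho$ absorbing both the excursions and the transitions can really be chosen uniformly across all $x$ and all $n$; I would organize the argument around the horoball picture, handling cusp excursions by explicit parabolic geometry and the thick-part transitions via the bilipschitz extension.
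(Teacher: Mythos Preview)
Your outline matches the paper closely for real analyticity, the sum bound via Douady--Earle bilipschitz extension plus the Morse/Fellow Traveller lemma, and eventual positivity.  Deducing the pointwise bound $\tau_\rho(x)=2\log r(x_1)+O_\rho(1)$ from the $n=1$ case of the sum bound together with the parabolic distance estimate $d(b_0,\rho(w)^{s}(b_0))=2\log s+O(1)$ is in fact a little cleaner than the paper's direct Busemann computation in parabolic coordinates.

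There is, however, a genuine gap in your H\"older argument, and it is exactly the place where the paper has to work hardest.  After your translation by $\rho(G(x_1))^{-1}$ you are left with $B_z(p,b_0)$ where $z=\xi_\rho(\omega(\sigma x))$ and $p=\rho(G(x_1))^{-1}(b_0)$.  Invoking ``smoothness of the Busemann function in its boundary variable'' gives a Lipschitz bound on $z\mapsto B_z(p,b_0)$ whose constant is of order $|p-z|^{-1}$.  But as $r(x_1)\to\infty$ the point $p$ escapes to the boundary, so this bound is not uniform over $\mathcal A$ without an additional argument that $p$ stays definitely separated from the admissible set of $z$'s.  The paper flags this explicitly (``the best general estimate one can have on $D_a$ is $O(r(a))$, so we will have to dig a little deeper'') and resolves it by passing to a parabolic normalization $\rho(w)(z)=z+1$ and translating only by $w^{-s}$, which brings both $\xi_\rho(\omega(x))$ and $\rho(G(x_1))(b_0)$ into a fixed bounded ball where the derivative is uniformly controlled.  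Your full translation can also be made to work, but you must supply the separation: for instance, the conical bound you already established (translated by $\rho(G(x_1))^{-1}$) forces the geodesic from $p$ to $z$ to pass within $M$ of $b_0$, which yields a uniform lower bound on $|p-z|$.  Without such a step the H\"older claim is not justified.

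Conversely, you have misidentified the hard part.  Your final paragraph worries that deep cusp excursions might spoil uniformity of the conical constant $L_\rho$.  They do not: the cusp geometry is already absorbed in the two-dimensional statement (Lemma~\ref{conicalapproach}), and the transfer to $\mathbb H^3$ is just ``$K$-bilipschitz image of a geodesic ray stays within a Morse constant $R(K)$ of the geodesic ray with the same endpoints,'' with $R(K)$ depending only on $K$.  No separate horoball analysis is needed there; the delicate parabolic estimates belong in the H\"older step, not the conical one.
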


\begin{proof}
Since $\xi_\rho(q)$ varies complex analytically in $\rho$  for all $q\in\Lambda(\Gamma)$, by Lemma \ref{limit map},
and $B_z(b_0,y)$ is real analytic in  $z\in\widehat{\mathbb C}$ and $y\in\mathbb H^3$, we see that
$\tau_\rho(x)$ varies analytically over $QC(\Gamma)$ for all $x\in\Sigma^+$.

Recall, see Douady-Earle \cite{douady-earle}, that there exists $K=K(\rho)>1$ and a $\rho$-equivariant 
$K$-bilipschitz map $\phi:\mathbb H^2\to\mathbb H^3$ so that $\phi(y_0)=b_0$ where $y_0$ is the origin in the disk model for $\mathbb H^2$. 
Therefore, if $L$ is the constant
from Lemma \ref{conicalapproach} and $x\in\Sigma^+$, then $\rho(G(x_1)\cdots G(x_n))(b_0)$ lies within $KL$ of
the $K$-bilipschitz ray $\phi\Big(\overrightarrow{y_0\omega(x)}\Big)$. The Fellow Traveller property for $\mathbb H^3$ implies that
there exist $R=R(K)>0$ so that any $K$-bilipschitz geodesic ray lies a Hausdorff distance at most $R$ from the geodesic
ray with the same endpoints. Therefore, if $M=KL+R$, then, for all $n\in\mathbb N$,
$$d(\rho(G(x_1)\cdots G(x_n))(b_0),\overrightarrow{b_0\xi_\rho(\omega(x))}\le M.$$

\medskip

We next obtain our claimed bounds on the roof function.
If $x\in\Sigma^+$, then
$$|\tau_\rho(x)|\le d\big(\rho(G(x_1))(b_0),b_0\big)$$
so if $a\in \mathcal A$, there exists $C_a$ so that if $x_1=a$,
then $|\tau_\rho(x)|\le C_a.$
Since our alphabet is infinite, our work is not done.

If $w\in\mathcal C^*$, we may normalize so that $\rho(w)(z)=z+1$ and $b_0=(0,0,b_w)$ in the upper half-space model 
for $\mathbb H^3$. If $z\in\mathbb C\subset\partial\mathbb H^3$ and $r>0$, we let
$B(z,r)$ denote the Euclidean ball of radius $r$ about $z$ in $\mathbb C$.
Since $g_a$ has length at most $2N+1$ in the alphabet $\mathcal S$, we may define
$$c_w=\max\{|\rho(g_a)(b_0)|\ |\ G(a)=w^sg_a\ \mathrm{for}\ \mathrm{some}\ a\in\mathcal A_2\}$$
where $|\rho(g_a)(b_0)|$ is the Euclidean distance from $\rho(g_a)(b_0)$ to $0=(0,0,0)$.
Suppose that $x\in\Sigma^+$, $r(x_1)\ge 2$ and $G(x_1)=w^{s}g_a$  where $s=r(a)-2$.
By definition, $\rho(g_a)(b_0)\in B(0,c_w)$,  so
$$\rho(w^sg_a)(b_0)=\rho(w^s)\big(\rho(g_a)(b_0)\big)\in \rho(w^s)\big(B(0,c_w)\big)=B(s,c_w).$$
Let $S=\max\{e^Mc_w\ :\ w\in\mathcal C^*\}$.
If $s>S$, then $b_0$ does not lie in  $B(s,e^Mc_w)$, but $\overrightarrow{b_0\xi_\rho(\omega(x))}$ passes through  $B(s,e^Mc_w),$
which implies that $\xi_\rho(\omega(x))\in B(s,e^Mc_w)$.
It then follows from our formula for the Busemann function that
\begin{eqnarray*}
\tau_\rho(x) &= & \log\left(\frac{|b_0-\xi_\rho(\omega(x))|^2 h(\rho(w^sg_a)(b_0))}{|\rho(w^sg_a)(b_0)-\xi_\rho(\omega(x))|^2 h(b_0)}\right)\\
&\le &
\log\left(\frac{(b_w^2+(s+e^Mc_w)^2)h(\rho(g_a)(b_0))}{h(\rho(g_a)(b_0))^2b_w}\right)=\log\left(\frac{(b_w^2+(s+e^Mc_w)^2)}{h(\rho(g_a)(b_0))b_w}\right).\\
\end{eqnarray*}
Similarly, 
$$\tau_\rho(x)\ge \log\left(\frac{(b_w^2+(s-e^Lc_w)^2)h(\rho(g_a)(b_0))}{\big(h(\rho(g_a)(b_0))^2+e^{2M}c_w^2\big)b_w}\right).$$
Since there are only finitely many choices of $g_a$, it is easy to see that there exists  $C_w$ so that 
$$2\log(r(x_1))-C_w\le \tau_\rho(x) \le 2\log(r(x_1))+C_w$$
whenever  $x\in\Sigma^+$, $r(x_1)>S+2$ and $G(x_1)=w^{s}g_a$.
Since there are only finitely many $w$ in $\mathcal C^*$ and
only finitely many words $a$ with $r(a)\le S+2$, we see that there exists $C_\rho$ so that
$$2\log(r(x_1))-C_\rho\le \tau_\rho(x) \le 2\log (r(x_1))+C_\rho$$
for all $x\in\Sigma^+$. 

\medskip

We next show that $\tau_\rho$ is locally H\"older continuous.
Since $\omega$ is locally H\"older continuous,
there exists $A$ and $\alpha>0$ so that if $x,y\in\Sigma^+$ and $x_i=y_i$ for $1\le i\le n$, then 
$$d(\omega(x),\omega(y))\le Ae^{-\alpha n}.$$
Since $\xi_\rho$ is H\"older, there exist $C$ and $\beta>0$
so that $d(\xi_\rho(z),\xi_\rho(w))\le Cd(z,w)^\beta$ for all \hbox{$z,w\in \Lambda(\Gamma)$},
so 
$$d(\xi_\rho(\omega(x)),\xi_\rho(\omega(y))\le CA^\beta e^{-\alpha\beta n}.$$
If $a\in\mathcal A$, then let 
$$D_a=\sup\left\{\Big|\frac{\partial}{\partial z}\Big|_{z=z_0}\Big(B_{z}(b_0,\rho(G(a))(b_0)\Big)\Big|\ :\ z_0=\xi_\rho(\omega(x))\ {\rm and}\ x_1=a\right\},$$
so
$$\sup \{ |\tau_\rho(x)-\tau_\rho(y)|\ | \ x,y\in [a,x_2,\ldots,x_n]\}\le D_aCA^\beta  e^{-\alpha\beta n}.$$
However, the best general estimate one can have on  $D_a$ is $O(r(a))$, so we will have to dig a little
deeper.

We again work in the upper half-space model,  and assume that $r(a)>S+2$, $G(a)=w^{s}g_a$  where $s=r(a)-2$
and normalize as before so that $\rho(w) (z)=z+1$. We then map the limit set into the boundary of
the upper-half space model by setting $\hat\xi_\rho=T\circ\xi_\rho$ where $T$ is a conformal automorphism which takes the Poincar\'e ball model to
the upper half-space model and takes the fixed point of $\rho(w)$ to $\infty$. Notice that $T$ is $K_w$-bilipschitz
on $T^{-1}(B(0,e^Mc_w))$. Therefore, if $x,y\in [a,x_2,\ldots,x_n]$, then 
$$|\hat\xi_\rho(x)-\hat\xi_\rho(y)=|\hat\xi_\rho(w^{-s}(x))-\hat\xi_\rho(w^{-s}(x))|\le K_wCA^\beta e^{-\alpha\beta(n-1)}$$
Moreover, there exists $D_w$ so that
$$\left|\frac{\partial}{\partial z}\Big|_{z=z_0}\Big(\hat B_{z}(b_0,\rho(G(a))(b_0)\Big)\right|\le D_w$$
if $z_0\in \rho(w)^{s}(B(0,e^Mc_w)))$, so
$$\sup \{ |\tau_\rho(x)-\tau_\rho(y)|\ \big| \ x,y\in [a,x_2,\ldots,x_n]\}\le K_wD_wCA^\beta  e^{-\alpha\beta (n-1)}.$$
Since there are only finitely many $a$ where $r(a)\le S+2$ and only finitely many choices of $w$, our
bounds are uniform over $\mathcal A$ and so $\tau_\rho$ is locally H\"older continuous.

\medskip

It remains to check that $\tau_\rho$ is eventually positive. Since
$$d(\rho(\gamma_n)(b_0),\overrightarrow{b_0\xi_\rho(\omega(x))})\le M$$
for all $n\in\mathbb N$, we see that
$$\Big|S_n\tau_\rho(x)-d(b_0,G(x_1)\cdots G(x_n))(b_0))\Big|\le 2M=R_\rho$$
Since the set
$$\mathcal B=\{\gamma\in \Gamma\ |\  d(\rho(\gamma)(b_0),b_0) \le 2R_\rho \}$$
is finite, there exists $\hat N$ so that if $\gamma$ has word length at least
$\hat N$ (in the generators given $\mathcal S$), then $\gamma$ does not
lie in $\mathcal B$. Therefore, if $n\ge \hat N$ and $x\in\Sigma^+$,
then $S_n\tau_\rho(x)>R_\rho>0$. Thus, $\tau_\rho$ is eventually positive and our proof is complete.
\end{proof}

It is a standard feature of the Thermodynamic Formalism that one may replace an eventually positive
roof function by a roof function which is strictly positive and cohomologous to the original roof function.
(For a statement and proof which  includes the current situation, see \cite[Lemma 3.3]{BCKM}.)

\begin{corollary}
\label{positive roof}
If $\rho\in QC(\Gamma)$, there exists a locally H\"older continuous function $\hat\tau_\rho$ and $c>0$ so that 
$\hat\tau_\rho(x)\ge c$ for all $x\in \Sigma^+$ and $\hat\tau_\rho$ is  cohomologous to $\tau_\rho$.
\end{corollary}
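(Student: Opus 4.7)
The plan is to use the standard telescoping/averaging trick from the thermodynamic formalism. By Proposition \ref{roof analytic}, there exist $C>0$ and $N\in\mathbb N$ such that $S_N\tau_\rho(x)\ge C$ for every $x\in\Sigma^+$. The candidate for a strictly positive cohomologous roof function is the $N$-th Birkhoff average
$$\hat\tau_\rho(x)\ =\ \frac{1}{N}S_N\tau_\rho(x)\ =\ \frac{1}{N}\sum_{i=0}^{N-1}\tau_\rho(\sigma^i x),$$
which satisfies $\hat\tau_\rho(x)\ge C/N$ uniformly. The two things to verify are that $\hat\tau_\rho$ is locally H\"older continuous and that it is cohomologous to $\tau_\rho$.

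Local H\"older continuity is essentially automatic: since $\tau_\rho$ is locally H\"older (by Proposition \ref{roof analytic}) with constants $D$ and $\theta\in(0,1)$, each summand $\tau_\rho\circ\sigma^i$ has variations $V_n(\tau_\rho\circ\sigma^i)\le D\theta^{n-i}$ for $n\ge i$, so a uniform finite sum remains locally H\"older. For the cohomological relation, I would set
$$h(x)\ =\ \frac{1}{N}\sum_{k=0}^{N-2}(N-1-k)\,\tau_\rho(\sigma^k x),$$
which is again a finite sum of locally H\"older functions (hence locally H\"older), and then carry out the elementary telescoping computation
$$h(x)-h(\sigma x)\ =\ \tau_\rho(x)-\frac{1}{N}S_N\tau_\rho(x)\ =\ \tau_\rho(x)-\hat\tau_\rho(x),$$
which exhibits $\tau_\rho$ and $\hat\tau_\rho$ as cohomologous in the sense of the paper.

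The only substantive technical point is checking that $h$ itself is locally H\"older (a single Birkhoff sum of a locally H\"older function against a countable alphabet shift is fine, but one should record that the weighted sum of $\tau_\rho\circ\sigma^k$ for $0\le k\le N-2$ picks up only a uniform multiplicative constant in the H\"older estimate, independent of which cylinder one is on). Since $N$ is a fixed integer and the supremum of $V_n(\tau_\rho\circ\sigma^k)$ for $k<N$ is bounded by a constant times $\theta^{\max(0,n-N+1)}$, this is immediate. As the authors indicate, a detailed proof of exactly this construction in an essentially identical countable-alphabet setting appears as \cite[Lemma 3.3]{BCKM}, so one may simply invoke that lemma. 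No further obstruction is expected.
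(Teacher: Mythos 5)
Your telescoping identity and the positivity bound $\hat\tau_\rho\ge C/N$ are correct, and the averaging construction is indeed the standard route (the paper itself gives no argument at all here beyond citing \cite[Lemma 3.3]{BCKM}, so your fallback of invoking that lemma is literally what the authors do). However, the step you single out as ``the only substantive technical point'' and then declare immediate is exactly where the argument breaks. The estimate $V_n(\tau_\rho\circ\sigma^i)\le D\theta^{n-i}$ holds only for $n\ge i+1$: if $x$ and $y$ agree in their first $n$ coordinates with $n\le i$, then $\sigma^i x$ and $\sigma^i y$ need not agree in any coordinate, so the only available bound is the total oscillation of $\tau_\rho$, which is infinite. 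Indeed, Proposition \ref{roof analytic} gives $\tau_\rho(x)=2\log r(x_1)+O(1)$ with $r$ unbounded on $\mathcal A$; taking $a\in\mathcal A_1$ and $x,y\in[a]$ with $r(x_2)\to\infty$ while $r(y_2)=1$ shows $V_1(\tau_\rho\circ\sigma)=+\infty$. Consequently $V_n\big(\tfrac{1}{N}S_N\tau_\rho\big)=+\infty$ for $1\le n\le N-1$, and the same holds for your transfer function $h$, so neither function is locally H\"older continuous in the sense defined in Section 2 (which requires $V_n(g)\le C\theta^n$ for all $n\in\mathbb N$); even under the weaker convention that only $n\ge 2$ matters, the construction still fails once $N\ge 3$. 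Your argument would be fine for a bounded roof function, but unboundedness of $\tau_\rho$ is the defining feature of the cusped setting and is used throughout the paper.

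The missing idea is therefore how to produce a coboundary whose low-order variations are finite; this is what makes \cite[Lemma 3.3]{BCKM} a lemma rather than a remark. One viable strategy is to arrange for the transfer function $h$ to be \emph{bounded} as well as locally H\"older, since then $V_1(h\circ\sigma)\le 2\|h\|_\infty<\infty$ and $V_n(h\circ\sigma)\le V_{n-1}(h)$ for $n\ge 2$, so $\tau_\rho-h+h\circ\sigma$ stays locally H\"older; constructing such an $h$ requires exploiting the lower bound $\tau_\rho(x)\ge 2\log r(x_1)-C_\rho$, which confines the region where partial Birkhoff sums $S_k\tau_\rho$ fail to be large to a finite union of cylinders on which everything in sight is bounded. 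As written, your proof does not close this gap, so you should either supply such an argument or genuinely defer to the cited lemma rather than claim the verification is immediate.
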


\section{Phase transition analysis}

We begin by extending  Kao's  phase transition analysis, see
Kao \cite[Thm. 4.1]{kao-pm}, which characterizes which linear combinations of  a pair of roof functions have finite pressure.
The primary use of this analysis will be in the case of a single roof function, i.e. when $a=1$ and $b=0$. However, we will
use the full force of this result in the proof of our Manhattan curve theorem, see Theorem \ref{Manhattan}.

\begin{theorem}
\label{phase transition}
If $\rho,\eta\in QC(\Gamma)$, $t\in\mathbb R$ and $a+b>0$, then
\hbox{$P(-t(a\tau_\rho+b\tau_\eta))$} is finite if and only if $t>\frac{1}{2(a+b)}$.
Moreover, $P(-t(a\tau_\rho+b\tau_\eta))$ is monotone decreasing and analytic in $t$ on $(\frac{1}{2(a+b)},\infty)$, and 
$$\lim_{t\to\frac{1}{2(a+b)}^+} P(-t(a\tau_\rho+b\tau_\eta))=+\infty.$$
If, in addition $a,b\ge 0$, then
$$\lim_{t\to\infty} P(-t(a\tau_\rho+b\tau_\eta))=-\infty.$$
\end{theorem}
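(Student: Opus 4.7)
The plan is to reduce the four assertions to quantitative properties of the roof functions established in Proposition \ref{roof analytic}, combined with the general pressure machinery of Theorem \ref{finite pressure} and Theorem \ref{pressure analytic}. Setting $g = a\tau_\rho + b\tau_\eta$ and $t_c = \tfrac{1}{2(a+b)}$, I would first apply Proposition \ref{roof analytic} twice to obtain a constant $C$ (depending on $a$, $b$, $\rho$, $\eta$) with
$$\bigl| g(x) - 2(a+b)\log r(x_1) \bigr| \leq C \qquad \text{for all } x\in \Sigma^+.$$
For each $c\in\mathcal A$ this gives $\sup_{x_1=c}(-tg)(x) = -2t(a+b)\log r(c) + O(1)$, and since the coding satisfies $1 \le \#r^{-1}(n)\le D$, the partition function of Theorem \ref{finite pressure} satisfies
$$Z_1(-tg) \asymp \sum_{n\ge 1} n^{-2t(a+b)},$$
which converges iff $t > t_c$. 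Theorem \ref{finite pressure} then yields the finiteness dichotomy. Analyticity on $(t_c,\infty)$ is immediate from Theorem \ref{pressure analytic}, applied to the (linear, hence real analytic) family $t\mapsto -tg$.

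To establish the divergence $P(-tg)\to +\infty$ as $t\to t_c^+$, the strategy is to bound $P$ below by a quantity comparable to $\log Z_1$ and invoke the divergence of the latter. Exploiting the BIP property, for each state $c\in\mathcal A$ one constructs a periodic orbit through $c$ of period bounded by some $N_0$ whose other coordinates lie in the finite BIP set $\mathcal B$; on such orbits, $g$ is controlled by $g(c)$ up to an additive constant independent of $c$ (using that $r$ is bounded on $\mathcal B$). Summing the corresponding contributions in the Gurevich partition function over periodic points yields $P(-tg) \geq \tfrac{1}{N_0}\bigl(\log Z_1(-tg) - K\bigr)$ for some $K$ independent of $t$, and the right side blows up as $t\to t_c^+$.

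For monotonicity, I would use the formula $\frac{d}{dt}P(-tg) = -\int g\, dm_t$ from Theorem \ref{pressure analytic}, where $m_t$ is the equilibrium state for $-tg$. The eventual positivity of $\tau_\rho$ and $\tau_\eta$, iterated blockwise, gives $S_{kN}\tau_\rho \geq kc_\rho$ and $S_{kN}\tau_\eta \geq kc_\eta$, so for $a,b\ge 0$ the ratio $S_ng(x)/n$ exceeds a uniform $c_0 > 0$ for all $n$ large and all $x\in\Sigma^+$; Birkhoff's ergodic theorem then forces $\int g\, dm_t \geq c_0 > 0$, so $\frac{dP}{dt}<0$. Finally, $t\mapsto P(-tg)$ is convex (as a supremum of affine functions in $t$ via the variational principle) and decreasing, so it tends to some $L\in[-\infty, P(t_c^+))$; were $L$ finite, the derivative would tend to $0$, contradicting the uniform bound $\frac{dP}{dt}\le -c_0$. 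Hence $L = -\infty$.

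The main technical obstacle is the lower bound $P \geq \tfrac{1}{N_0}(\log Z_1 - K)$ in step three: one needs to leverage BIP plus topological mixing to build, uniformly over $c\in\mathcal A$, a periodic orbit through $c$ whose period is bounded and whose other coordinates produce bounded contribution to $S_{N_c}g$. Everything else is a careful application of Proposition \ref{roof analytic} together with standard convexity and analyticity arguments from the Thermodynamic Formalism.
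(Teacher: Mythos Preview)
Your treatment of the finiteness dichotomy and of analyticity is exactly the paper's: both reduce to the $Z_1$-criterion via the two-sided bound $|g(x)-2(a+b)\log r(x_1)|\le C$ from Proposition~\ref{roof analytic}, and then invoke Theorems~\ref{finite pressure} and~\ref{pressure analytic}. For the blow-up as $t\to t_c^+$, the paper extracts from the proof of \cite[Thm.~2.1.9]{MU} the inequality $P(f)\ge \frac{1}{1+s}\log Z_1(f)-\text{const}$; your BIP construction of short periodic orbits through each letter is essentially a sketch of how such an inequality is proved, so the two arguments coincide in substance.

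Where you genuinely diverge is in monotonicity and the limit $P\to-\infty$. The paper works directly with the Gurevich sums: for $a,b\ge 0$ and $x\in\mathrm{Fix}^n$ one has $S_n(a\tau_\rho+b\tau_\eta)(x)=a\ell_\rho(\gamma)+b\ell_\eta(\gamma)>0$, and one compares $\sum e^{-tS_ng}$ at different values of $t$. You instead invoke the derivative formula $\frac{d}{dt}P(-tg)=-\int g\,dm_t$. This is legitimate but carries an unstated hypothesis: the derivative formula in Theorem~\ref{pressure analytic} requires $\int g\,dm_{-tg}<\infty$, hence existence of the equilibrium state. You have not verified this. It does follow from the same estimate driving everything else---namely $\sum_{n\ge 1}(\log n)\,n^{-2t(a+b)}<\infty$ for $t>t_c$, combined with \cite[Lemma~2.2.8]{MU} and Theorem~\ref{uniqeq} (this is exactly the computation carried out later in Lemma~\ref{roof integrable})---so there is no circularity, but the check should be inserted before you appeal to $P'(t)=-\int g\,dm_t$. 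Once $\int g\,dm_t\ge c_0>0$ is in hand, the convexity detour is also unnecessary: integrating $P'\le -c_0$ directly gives $P(t)\le P(t_0)-c_0(t-t_0)\to-\infty$. The trade-off is that the paper's route is more elementary (it never constructs an equilibrium state), while yours yields the quantitatively sharper statement $P'(t)\le -c_0$ uniformly in $t$.
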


Riquelme and Velozo \cite[Thm. 1.4]{riquelme-velozo} previously established results closely related to 
Theorem \ref{phase transition} in the more
general setting of negatively curved manifolds with bounded geometry.

\begin{proof}
Recall, from Theorem \ref{finite pressure}, that, since $-t(a\tau_\rho+b\tau_\eta)$ is locally H\"older continuous and $(\Sigma^+,\sigma)$ is a one-sided,
toplogically mixing countable Markov shift with BIP,  $P(-t(a\tau_\rho+b\tau_\eta))$ is finite if and only if $Z_1(-t(a\tau_\rho+b\tau_\eta))<+\infty.$
Since there exists $D\in\mathbb N$ so that $\#r^{-1}(n)\le D$ for all $n\in\mathbb N$,
Proposition \ref{roof analytic} implies that
$$Z_1(-t(a\tau_\rho+b\tau_\eta))\le D \sum_{n=1}^\infty  e^{-t(a+b)(2\log n -\max\{C_\rho,C_\eta\})}$$ 
so  $P(-t(a\tau_\rho+b\tau_\eta))<+\infty$ if $t>\frac{1}{2(a+b)}$.
Similarly, since $r^{-1}(n)$ is non-empty if $n\ge1$,
we see that 
$$Z_1(-t(a\tau_\rho+b\tau_\eta))\ge \sum_{n=1}^\infty  e^{-t(a+b)(2\log n +\max\{C_\rho,C_\eta\})}$$ 
so  $P(-t(a\tau_\rho+b\tau_\eta))=+\infty$  if $t\le \frac{1}{2(a+b)}$ and
$$\lim_{t\to\frac{1}{2(a+b)}^+} Z_1(-t(a\tau_\rho+b\tau_\eta))=+\infty.$$

It follows from the definition that $P(-t(a\tau_\rho+b\tau_\eta))$ is monotone decreasing in $t$ and 
Theorem \ref{pressure analytic}
implies that it is analytic in $t$ on $(\frac{1}{2(a+b)},\infty)$.
In the proof of \cite[Thm. 2.1.9]{MU}, Mauldin and Urbanski show that 
given  a locally H\"older continuous function $f$ on a one-sided countable Markov shift which is topologically mixing and has property BIP,
there exist constants \hbox{$q,s,M,m>0$} so that for any $n\in\mathbb N$, we have 
$$
\sum_{i=n}^{n+s(n-1)}Z_{i}(f)\geq  \frac{e^{-M+(M-m)n}}{q^{n-1}}Z_{1}(f)^{n}.
$$
where if $E^n$ is the set of allowable words of length $n$ in $\mathcal A$, then
$$Z_n(f)=\sum_{w\in E^n} e^{\sup\{ S_nf(x)\ |\ x_i=w_i\ \forall 1\le i\le n\}}\ \text{ and }\ \lim\frac{1}{n}\log Z_n(f)=P(f).$$
It  follows that for all $n$, there exist $A>0$ and  $\hat n\in [n,n+s(n-1)]$ such that
$Z_{\hat n}\ge A^nZ_1(f)^n$, so $P(f)\ge \frac{1}{1+s}Z_1(f)-\log A$.
Therefore,
$$\lim_{t\to\frac{1}{2(a+b)}^+} P(-t(a\tau_\rho+b\tau_\eta))=+\infty.$$

If  $a,b\ge 0$ and $x\in \mathrm{Fix}^n$, then $S_n(a\tau_\rho+b\tau_\eta)(x)>0$, so if $t>1$, then
$$\sum_{x\in \mathrm{Fix}^n\ |\ x_1=a} e^{S_n(-t(a\tau_\rho+b\tau_\eta))(x)}
\le \frac{1}{t} \sum_{x\in \mathrm{Fix}^n\ |\ x_1=a} e^{S_n(-a\tau_\rho-b\tau_\eta)(x)}$$
since $c^t\le \frac{1}{t}c$ if $0\le c\le 1$ and $t>1$.
Therefore, $P(-t(a\tau_\rho+b\tau_\eta))\le P(-a\tau_\rho-b\tau_\eta)-\log t$,
so $\lim_{t\to\infty} P(-t(a\tau_\rho+b\tau_\eta))=-\infty$. 
\end{proof}

\section{Entropy and Hausdorff dimension}
\label{hd and entropy}

Theorem \ref{phase transition} implies that if $\rho\in QC(\Gamma)$ then there is a unique solution $h(\rho)>\frac{1}{2}$ to 
\hbox{$P(-h(\rho)\tau_\rho)=0$.}
This unique solution $h(\rho)$ is the {\em topological
entropy} of $\rho$, see the discussion in Kao \cite[Section 5]{kao-pm}. Theorem \ref{pressure analytic} and the implicit function theorem
then imply that $h(\rho)$ varies analytically over $QC(\Gamma)$, generalizing a result of
Ruelle \cite{ruelle-hd} in the convex cocompact case. Since  the entropy $h(\rho)$ is invariant under conjugation,
we obtain analyticity of entropy over $QF(S)$. We recall that Schapira and Tapie \cite[Thm. 6.2]{schapira-tapie} previously established that the entropy is
$C^1$ on $QF(S)$.

\begin{theorem}\label{entropy analytic}
If $S$ is a compact hyperbolic surface with non-empty boundary, then
the topological entropy  varies analytically over $QF(S)$.
\end{theorem}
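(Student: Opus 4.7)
The plan is to apply the implicit function theorem to the equation
$$F(\rho, t) := P(-t\tau_\rho) = 0 \quad \text{on } QC(\Gamma) \times (1/2, \infty),$$
which by Theorem \ref{phase transition} has unique solution $t = h(\rho)$ for each $\rho$, and then to descend the resulting analytic function $h\colon QC(\Gamma) \to \mathbb R$ to the analytic quotient $QF(S) = QC(\Gamma)/\mathsf{PSL}(2,\mathbb C)$ using conjugation invariance of entropy.

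First I would verify joint real analyticity of $F$. By Proposition \ref{roof analytic}, $\{-t\tau_\rho\}_{(\rho,t)}$ is a real analytic family of locally H\"older continuous functions on the real analytic manifold $QC(\Gamma) \times (1/2, \infty)$, and Theorem \ref{phase transition} guarantees finite pressure throughout. Hence Theorem \ref{pressure analytic} yields real analyticity of $F$.

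The key technical step is to establish $\partial_t F(\rho, h(\rho)) \neq 0$. By Theorem \ref{pressure analytic},
$$\partial_t F(\rho, h(\rho)) = -\int_{\Sigma^+} \tau_\rho \, dm_{-h(\rho)\tau_\rho},$$
provided the integral is finite. To verify integrability I would combine the upper estimate $\tau_\rho(x) \leq 2\log r(x_1) + C_\rho$ from Proposition \ref{roof analytic} with the Gibbs bound (Theorem \ref{uniqgibbs}) $m_{-h(\rho)\tau_\rho}([a]) \asymp e^{-h(\rho)\tau_\rho(x)} \lesssim r(a)^{-2h(\rho)}$ for $x \in [a]$, together with the uniform bound $\#r^{-1}(n) \leq D$. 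Grouping by $r(a) = n$ yields
$$\int_{\Sigma^+}\tau_\rho\, dm_{-h(\rho)\tau_\rho} \;\lesssim\; \sum_{n\ge 1} D\,(2\log n + C_\rho)\, n^{-2h(\rho)} \;<\;\infty,$$
convergent precisely because $h(\rho) > 1/2$. The eventual positivity of $\tau_\rho$ (Proposition \ref{roof analytic}) then forces $\int \tau_\rho \, dm_{-h(\rho)\tau_\rho} > 0$, so the partial derivative is strictly negative.

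The implicit function theorem now gives real analyticity of $\rho \mapsto h(\rho)$ on $QC(\Gamma)$. To descend to $QF(S)$, I would observe that $h(\rho)$ is $\mathsf{PSL}(2,\mathbb C)$-conjugation invariant, since $h(\rho)$ is the exponential growth rate of the translation length spectrum of $\rho(\Gamma)$, which depends only on the conjugacy class. Because $QF(S)$ is a smooth analytic quotient of $QC(\Gamma)$, the analytic invariant function $h$ descends analytically to $QF(S)$. The main obstacle is the integrability verification above: the derivative formula of Theorem \ref{pressure analytic} demands it, and it requires a delicate balance between the logarithmic growth of $\tau_\rho$ in $r(x_1)$ and the polynomial Gibbs decay of cylinders, in which the entropy lower bound $h(\rho) > 1/2$ is exactly what saves convergence.
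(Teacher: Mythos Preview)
Your proposal is correct and follows essentially the same approach as the paper: apply the analytic implicit function theorem to $P(-t\tau_\rho)=0$, using Proposition~\ref{roof analytic} and Theorem~\ref{pressure analytic} for analyticity, then descend to $QF(S)$ by conjugation invariance. You supply more detail than the paper on the nonvanishing of $\partial_t F$, verifying integrability of $\tau_\rho$ against the equilibrium state via the Gibbs property and the bounds of Proposition~\ref{roof analytic}; the paper carries out an equivalent computation later in Lemma~\ref{roof integrable} (citing \cite[Lemma~2.2.8]{MU} rather than the Gibbs bound directly) and establishes positivity via the cohomologous positive function of Corollary~\ref{positive roof} rather than eventual positivity, but these are minor variations on the same argument.
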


Sullivan \cite{sullivan-GF} showed that the topological entropy $h(\rho)$ agrees with
the Hausdorff dimension of the limit set $\Lambda(\rho(\Gamma))$, so we obtain the following corollary.

\begin{theorem} {\rm (Sullivan \cite{sullivan-GF,sullivan-aspects})}
\label{entropy and pressure}
If $\rho\in QC(\Gamma)$, then its topological entropy $h(\rho)$ is the exponential growth rate 
of the number of  closed geodesics of length less than $T$ in $N_\rho=\mathbb H^3/\rho(\Gamma)$.
Moreover, $h(\rho)$ is the Hausdorff dimension  of the limit set $\Lambda(\rho(\Gamma))$ and the
critical exponent of the Poincar\'e series $Q_\rho(s)$.
\end{theorem}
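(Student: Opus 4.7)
The plan is to verify that the quantity $h(\rho)$ defined by the equation $P(-h(\rho)\tau_\rho)=0$ (well-defined by Theorem \ref{phase transition}, since the pressure is continuous, strictly decreasing on $(\frac{1}{2},\infty)$, blows up as $t\to\frac12$, and tends to $-\infty$ as $t\to\infty$) coincides with each of the three classical quantities in the statement. The second and third equalities (Hausdorff dimension of the limit set and critical exponent of the Poincar\'e series) are Sullivan's theorems \cite{sullivan-GF,sullivan-aspects} for geometrically finite Kleinian groups, and $\rho(\Gamma)$ is such a group since $\rho$ is a quasiconformal conjugate of a geometrically finite Fuchsian group with parabolic boundary. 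So the only point requiring argument in our setting is that $h(\rho)$ equals the exponential growth rate of closed geodesic counts in $N_\rho$.

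For this first equality, I would use the bijection (from the final clause of the coding proposition and Proposition \ref{roof analytic}) between cyclic equivalence classes of periodic points $x=(\overline{x_1,\ldots,x_n})\in\mathrm{Fix}^n$ of $(\Sigma^+,\sigma)$ and conjugacy classes of hyperbolic elements $[\gamma]\in[\Gamma]$, under which $S_n\tau_\rho(x)=\ell_\rho(\gamma)$. The Gurevich pressure then satisfies
\begin{equation*}
P(-t\tau_\rho)=\lim_{n\to\infty}\frac{1}{n}\log\sum_{\substack{x\in\mathrm{Fix}^n\\ x_1=a}}e^{-t\, S_n\tau_\rho(x)},
\end{equation*}
which up to bounded multiplicative corrections (the BIP initial-letter restriction and the bounded-to-one cyclic permutation map) is the exponential rate in the symbolic period $n$ of a geodesic-length weighted sum over $[\Gamma]$. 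To pass from a sum indexed by symbolic period to one indexed by geodesic length $T$, I would combine the eventual positivity of $\tau_\rho$, which forces $S_n\tau_\rho(x)\ge cn$ for $n$ large (so that the symbolic period of any $[\gamma]\in R_T(\rho)$ is $O(T)$), with the displacement bound $|S_n\tau_\rho(x)-d(b_0,\rho(G(x_1)\cdots G(x_n))b_0)|\le R_\rho$ of Proposition \ref{roof analytic}. A standard generating function / Laplace transform argument (the one used to derive the prime orbit theorem for a suspension flow over a countable Markov shift from the pressure formalism) then identifies the unique zero $h(\rho)$ of $t\mapsto P(-t\tau_\rho)$ with $\limsup_{T\to\infty}\frac{1}{T}\log\#R_T(\rho)$.

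The main obstacle is the conversion from symbolic period to geodesic length. Two subtleties must be handled: a single symbol $x_i\in\mathcal{A}_2$ encodes a parabolic excursion of arbitrarily large word length and contributes a Birkhoff summand of size $\approx 2\log r(x_i)$ rather than being bounded, so symbolic period and geodesic length are only weakly comparable; and one must verify that the resulting distortion does not alter the exponential growth rate. The quantitative bounds $2\log r(x_1)-C_\rho\le\tau_\rho(x)\le 2\log r(x_1)+C_\rho$ in Proposition \ref{roof analytic} are tailored to control these parabolic contributions. Once the first equality is established in this way, the remaining identifications with $\dim_H\Lambda(\rho(\Gamma))$ and the critical exponent of $Q_\rho$ follow directly from Sullivan's work, completing the proof.
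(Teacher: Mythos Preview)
The paper does not give a proof of this statement at all: Theorem~\ref{entropy and pressure} is simply attributed to Sullivan, with citations to \cite{sullivan-GF,sullivan-aspects} and no argument. The only bridge the paper supplies between its own symbolic setup and Sullivan's theorem is the sentence immediately preceding Theorem~\ref{entropy analytic}, where the identification of the unique zero of $t\mapsto P(-t\tau_\rho)$ with the topological entropy is deferred to Kao \cite[Section~5]{kao-pm}. In effect the paper's ``proof'' consists entirely of references.

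Your proposal therefore goes well beyond what the paper does: you sketch how one would actually extract the closed-geodesic growth rate from the pressure zero via the coding, and the ingredients you identify (the periodic-orbit bijection from the coding proposition, the ergodic-sum bounds of Proposition~\ref{roof analytic}, eventual positivity) are the right ones. Two remarks. First, the linear lower bound $S_n\tau_\rho(x)\ge cn$ does not follow directly from eventual positivity as defined in the paper (which only gives $S_n\tau_\rho\ge C$ for $n\ge N$); you need to combine it with the uniform lower bound $\tau_\rho\ge -C_\rho$ from Proposition~\ref{roof analytic} and a block decomposition. Second, the ``standard generating function / Laplace transform argument'' you invoke is genuinely delicate over a countable alphabet with an unbounded roof function, since $\tau_\rho$ is not bounded above and the usual Bowen-type estimates for suspension flows require the kind of care worked out in, e.g., \cite{BCKM}. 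This is presumably why the paper outsources the point to Kao and Sullivan rather than reproving it. Your outline is correct, but to make it a self-contained proof you would need to supply that counting estimate rather than call it standard.
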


Theorems \ref{entropy analytic} and \ref{entropy and pressure} together imply that the Hausdorff dimension
of the limit set varies analytically.

\begin{corollary}
\label{h-dim analytic}
The Hausdorff dimension of $\Lambda(\rho(\Gamma))$ varies analytically over $QC(\Gamma)$.
\end{corollary}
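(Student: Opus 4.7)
The plan is to simply combine the two results immediately preceding the corollary: Sullivan's identification of the topological entropy with the Hausdorff dimension (Theorem \ref{entropy and pressure}) and the analytic variation of the topological entropy over $QC(\Gamma)$ (Theorem \ref{entropy analytic}, or rather, the discussion leading to it). Concretely, for each $\rho \in QC(\Gamma)$, Sullivan's theorem gives the equality
\[
\dim_H\bigl(\Lambda(\rho(\Gamma))\bigr) = h(\rho),
\]
so analyticity of the right-hand side on $QC(\Gamma)$ transfers verbatim to the Hausdorff dimension.

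Thus, the entire content is to verify that $h(\rho)$ is real analytic on $QC(\Gamma)$. For this I would invoke Proposition \ref{roof analytic}, which shows that $\{\tau_\rho\}_{\rho \in QC(\Gamma)}$ is a real analytic family of locally H\"older continuous, eventually positive roof functions. Define $F: QC(\Gamma) \times (\tfrac{1}{2},\infty) \to \mathbb{R}$ by $F(\rho,t) = P(-t\tau_\rho)$. The phase transition analysis (Theorem \ref{phase transition}, specialized to $a=1$, $b=0$) guarantees that $F$ is well-defined and finite on this domain, strictly decreasing from $+\infty$ to $-\infty$ in $t$, and hence admits a unique zero $h(\rho) > \tfrac{1}{2}$. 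By Theorem \ref{pressure analytic} (Mauldin-Urbanski, Sarig), $F$ is real analytic in both variables, and its derivative in $t$ at $h(\rho)$ equals $-\int \tau_\rho \, dm_{-h(\rho)\tau_\rho}$, which is strictly negative because $\tau_\rho$ is eventually positive and cohomologous to a strictly positive function (Corollary \ref{positive roof}). The implicit function theorem then yields that $\rho \mapsto h(\rho)$ is real analytic on $QC(\Gamma)$.

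There is no real obstacle here beyond bookkeeping — all the heavy lifting has already been carried out in Proposition \ref{roof analytic} (analytic dependence of the roof), Theorem \ref{phase transition} (existence and uniqueness of $h(\rho)$), and Theorem \ref{pressure analytic} (analyticity of Gurevich pressure in analytic parameters). Combining these with Sullivan's classical identification closes the argument.
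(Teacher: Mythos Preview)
Your proposal is correct and follows exactly the paper's approach: the corollary is deduced immediately from Theorem \ref{entropy analytic} (together with the discussion preceding it, which establishes analyticity of $h(\rho)$ over $QC(\Gamma)$ via the implicit function theorem) and Sullivan's Theorem \ref{entropy and pressure}. Your additional unpacking of why $h(\rho)$ is analytic simply reproduces that discussion in more detail.
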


\medskip\noindent
{\bf Remarks:} 1) Sullivan \cite{sullivan-aspects} also showed that $h(\rho)$ is the critical exponent of the Poincar\'e series 
$$Q_\rho(s)=\sum_{\gamma\in\Gamma} e^{-sd(b_0,\rho(\gamma)(b_0))},$$
i.e. $Q_\rho(s)$ diverges if $s<h(\rho)$ and converges if $s>h(\rho)$.

2) Bowen \cite{bowen-qf} showed that if \hbox{$\rho\in QF(S)$} and $S$ is a closed surface,
then $h(\rho)\ge 1$ 
with equality  if and only if $\rho$ is Fuchsian. Sullivan \cite[p. 66]{sullivan-discrete}, see also Xie \cite{xie}, 
observed that Bowen's rigidity result extends to the case when $\mathbb H^2/\Gamma$ has finite area.

\section{Manhattan curves}

If $\rho,\eta\in QC(\Gamma)$, we define, following Burger \cite{burger}, the {\em Manhattan curve}
$$\mathcal C(\rho,\eta)=\{ (a,b)\in D \ | \ P(-a\tau_\rho-b\tau_\eta)=0\}$$
where $D=\{(a,b)\in\mathbb R^2\ |\ a,b\ge 0 \ {\rm and}\ (a,b)\ne (0,0)\}$.
Notice that, since the Gurevich pressure is defined in terms of lengths of closed geodesics,
if $\hat\rho$ is conjugate (or complex conjugate) to $\rho$ and $\hat\eta$ is conjugate (or complex conjugate) to $\eta$,
then
$\mathcal C(\rho,\eta)=\mathcal C(\hat\rho,\hat\eta)$.

One may give an alternative characterization by noticing that
$P(-ab_\rho-b\tau_\eta)=0$ if and only if 
$$h^{a,b}(\rho,\eta)=\lim \frac{1}{T}\log \#\{\ [\gamma]\in[\Gamma]\ |\  0<a\ell_\rho(\gamma)+b\ell_\eta(\gamma)\le T\}=1$$
where $[\Gamma]$ is the collection of conjugacy classes in $\Gamma$.
Moreover, $h^{a,b}(\rho,\eta)$ is also the critical exponent of 
$$Q_{\rho,\eta}^{a,b}(s)=\sum_{\gamma\in\Gamma} e^{-s \left(ad(0,\rho(\gamma)(0))+bd(0,\eta(\gamma)(0))\right)}.$$
(see Theorem 4.8, Remark 4.9 and Lemma 4.10 in Kao \cite{kao-manhattan}).

\begin{theorem} 
\label{Manhattan}
If $\rho,\eta\in QC(\Gamma)$, then $\mathcal C(\rho,\eta)$ 
\begin{enumerate}
\item
is a closed subsegment of an analytic curve,
\item
has endpoints  $(h(\rho),0)$ and $(0,h(\eta))$,
\item
and is strictly convex, unless $\rho$ and $\eta$ are conjugate in ${\rm Isom}(\mathbb H^3)$.
\end{enumerate}
Moreover,  the tangent line to $\mathcal C(\rho,\eta)$ at $(h(\rho),0)$ has slope
$$-\frac{\int \tau_\eta dm_{-h(\rho)\tau_\rho}}{\int \tau_\rho dm_{-h(\rho)\tau_\rho}}.$$
\end{theorem}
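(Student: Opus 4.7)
My plan is to treat $\mathcal{C}(\rho,\eta)$ as the zero level set of the real-valued function $F(a,b) := P(-a\tau_\rho - b\tau_\eta)$ on the open domain $\{a+b > 1/2\}$, and to extract all four claims from convexity, the pressure-derivative formula, and the implicit function theorem.

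First I establish the global structure. By Proposition \ref{roof analytic}, the family $\{-a\tau_\rho - b\tau_\eta\}_{(a,b)}$ is a real-analytic family of locally H\"older potentials, so Theorem \ref{pressure analytic} makes $F$ real-analytic wherever defined. Theorem \ref{phase transition} then shows that along every ray from the origin into $\{a,b\ge 0,\ a+b>0\}$, $F$ is strictly decreasing from $+\infty$ down to $-\infty$, so that ray meets $\mathcal{C}(\rho,\eta)$ in exactly one point. The endpoints $(h(\rho),0)$ and $(0,h(\eta))$ are then immediate from the defining property of the entropies as the unique zeros of $t\mapsto P(-t\tau_\rho)$ and $t\mapsto P(-t\tau_\eta)$, which establishes (2).

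For (1) and the slope formula I use the implicit function theorem. By the derivative formula in Theorem \ref{pressure analytic},
\[
F_a = -\int_{\Sigma^+}\tau_\rho\, dm_g, \qquad F_b = -\int_{\Sigma^+}\tau_\eta\, dm_g,
\]
where $m_g$ is the equilibrium state for $g = -a\tau_\rho - b\tau_\eta$. Since $\tau_\rho$ and $\tau_\eta$ are each cohomologous to strictly positive functions (Corollary \ref{positive roof}) and $m_g$ is $\sigma$-invariant, these integrals are strictly positive, so $F_a, F_b$ are strictly negative everywhere on $\mathcal{C}(\rho,\eta)$. Hence $\nabla F$ never vanishes along the curve, so $\mathcal{C}(\rho,\eta)$ is locally the graph of an analytic function in either coordinate, proving (1). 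Writing $a$ as a function of $b$ near $(h(\rho),0)$, the slope is
\[
-\frac{F_b}{F_a}\bigg|_{(h(\rho),0)} = -\frac{\int \tau_\eta\, dm_{-h(\rho)\tau_\rho}}{\int \tau_\rho\, dm_{-h(\rho)\tau_\rho}},
\]
as claimed.

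For (3), convexity of pressure as a function of the potential (H\"older's inequality on the $Z_n$) makes $F$ convex in $(a,b)$; thus $\{F\le 0\}$ is convex and its boundary arc $\mathcal{C}(\rho,\eta)$ is convex. Strict convexity along a nonzero direction $v = (\alpha,\beta)$ fails only when the second derivative of $F$ in direction $v$ vanishes; by the second-derivative formula in Theorem \ref{pressure analytic} (with no Hessian term, the family being linear in $(a,b)$), this forces $\mathrm{Var}(\alpha\tau_\rho + \beta\tau_\eta, m_g) = 0$, i.e.\ $\alpha\tau_\rho + \beta\tau_\eta$ is cohomologous to a constant. The Livschitz theorem for countable Markov shifts then yields the period identity $\alpha\ell_\rho(\gamma) + \beta\ell_\eta(\gamma) = c\cdot n(\gamma)$ for every conjugacy class, where $n(\gamma)$ is the number of shift steps associated to $\gamma$. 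A careful comparison with geometric translation lengths (exploiting unboundedness of $n(\gamma)/\ell_\rho(\gamma)$ across sequences of classes) forces $c = 0$ and then $\ell_\eta \equiv -(\alpha/\beta)\ell_\rho$, so marked length spectrum rigidity in the type-preserving quasifuchsian setting (following Sullivan and Bowen) yields that $\rho$ and $\eta$ are conjugate in $\mathrm{Isom}(\mathbb{H}^3)$. This rigidity step is the main obstacle, as it requires both a Livschitz theorem adapted to the Stadlbauer-Ledrappier-Sarig coding and an MLS-type rigidity result compatible with the parabolic cusps.
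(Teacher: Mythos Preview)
Your treatment of parts (1), (2), and the slope formula follows the paper's approach and is essentially correct, with one omission: you invoke the derivative formula from Theorem~\ref{pressure analytic} without verifying that the equilibrium state $m_g$ exists and that $\int \tau_\rho\,dm_g$ and $\int\tau_\eta\,dm_g$ are \emph{finite}. Positivity from Corollary~\ref{positive roof} is not enough; finiteness needs the estimate $\tau_\theta(x)\asymp 2\log r(x_1)$ from Proposition~\ref{roof analytic} together with $2(a+b)>1$ on the curve (this is the content of the paper's Lemma~\ref{roof integrable}). Your convexity argument via convexity of pressure is equivalent to the paper's H\"older--inequality argument on the Poincar\'e series.

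For part (3) your route through the variance formula is genuinely different from the paper's, which instead compares the slope of the line at its two endpoints to show that $m_{-h(\eta)\tau_\eta}$ is an equilibrium state for $-h(\rho)\tau_\rho$, hence equals $m_{-h(\rho)\tau_\rho}$, hence the two potentials are cohomologous. Your approach can be made to work, but your argument for $c=0$ has a real gap. First, the ratio $n(\gamma)/\ell_\rho(\gamma)$ is \emph{bounded above}, not unbounded: by Corollary~\ref{positive roof} one has $\hat\tau_\rho\ge c>0$, so $\ell_\rho(\gamma)=S_{n(\gamma)}\hat\tau_\rho\ge c\,n(\gamma)$. Second, even exploiting that $\ell_\rho(\gamma)/n(\gamma)$ can be made large (via letters with large $r$-value) only forces $\alpha+\beta=0$, not $c=0$. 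The clean fix is to note that the direction $v=(\alpha,\beta)$ for which $D_v^2F=0$ must be the \emph{tangent} direction to the level curve $\{F=0\}$ (this is exactly what makes the curvature $g''$ vanish); along that direction $D_vF=0$, i.e.\ $\int(\alpha\tau_\rho+\beta\tau_\eta)\,dm_g=0$, and since $m_g$ is $\sigma$-invariant this integral equals the cohomology constant $c$. Once $c=0$ you obtain $\ell_\eta\equiv -(\alpha/\beta)\ell_\rho$, and the rigidity step should cite Kim's marked-length-spectrum theorem (which allows a proportionality constant) rather than Bowen--Sullivan.
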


Burger \cite{burger} established Theorem \ref{Manhattan} for convex cocompact
Fuchsian groups, with the exception of the analyticity of the Manhattan curve, which was established by Sharp \cite{sharp}.

\medskip

Notice that if $\rho$ and $\eta$ are conjugate in ${\rm Isom}(\mathbb H^3)$, then
$\tau_\rho=\tau_\eta$ so $\mathcal C(\rho,\eta)$ is a straight line.
We will  need  the following technical result in the  proof of Theorem \ref{Manhattan}.

\begin{lemma}
\label{roof integrable}
If $\rho,\eta,\theta\in QC(\Gamma)$, $2(a+b)> 1$ and $P(-a\tau_\rho-b\tau_\eta)=0$,
then there exists a unique equlibrium state $m_{-a\tau_\rho-b\tau_\eta}$ for $-a\tau_\rho-b\tau_\eta$ and
$$0<\int_{\Sigma^+} \tau_\theta dm_{-a\tau_\rho-b\tau_\eta}<+\infty.$$
\end{lemma}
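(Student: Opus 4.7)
The plan is to first produce a shift-invariant Gibbs state for $f := -a\tau_\rho - b\tau_\eta$, then use the Gibbs inequality together with the quantitative bounds on roof functions from Proposition \ref{roof analytic} to dominate the cylinder measures $\nu([w])$ by a power of $r(w)$, and finally to cash this in for integrability of every $\tau_\theta$.

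The potential $f$ is a linear combination of locally H\"older functions and hence locally H\"older continuous, and by hypothesis $P(f)=0<\infty$. Hence Theorem \ref{uniqgibbs} supplies a unique shift-invariant Gibbs state $\nu$ for $f$, i.e.\ there exist $B>1$ and $C\in\mathbb R$ with
$$\frac{1}{B}\le \frac{\nu([w])}{e^{\sup\{f(x):x_1=w\}-C}}\le B$$
for every $w\in\mathcal A$. The key step is to combine this with the lower bound $\tau_\mu(x)\ge 2\log r(x_1)-C_\mu$ from Proposition \ref{roof analytic} applied to $\mu=\rho,\eta$, which gives
$$\sup\{f(x):x_1=w\}\le -2(a+b)\log r(w)+aC_\rho+bC_\eta.$$
Consequently there is a constant $B'>0$ with $\nu([w])\le B'\,r(w)^{-2(a+b)}$ for every $w\in\mathcal A$.

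Now fix $\theta\in QC(\Gamma)$. The upper bound $\tau_\theta(x)\le 2\log r(x_1)+C_\theta$ from Proposition \ref{roof analytic} yields
$$\int_{\Sigma^+}\tau_\theta\,d\nu\le C_\theta+2\sum_{w\in\mathcal A}\nu([w])\log r(w)\le C_\theta+2DB'\sum_{n=1}^\infty \frac{\log n}{n^{2(a+b)}},$$
where we used the cardinality bound $\#r^{-1}(n)\le D$ recorded before Proposition \ref{roof analytic}. The hypothesis $2(a+b)>1$ is precisely what makes this series converge, giving $\int\tau_\theta\,d\nu<\infty$. For positivity, use Corollary \ref{positive roof} to pick $\hat\tau_\theta$ cohomologous to $\tau_\theta$ with $\hat\tau_\theta\ge c>0$; since $\nu$ is $\sigma$-invariant the coboundary integrates to zero, so $\int\tau_\theta\,d\nu=\int\hat\tau_\theta\,d\nu\ge c>0$.

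Applying this conclusion to $\theta=\rho$ and $\theta=\eta$ gives $-\int f\,d\nu=a\int\tau_\rho\,d\nu+b\int\tau_\eta\,d\nu<+\infty$, so Theorem \ref{uniqeq} promotes the Gibbs state $\nu$ to the unique equilibrium state $m_{-a\tau_\rho-b\tau_\eta}$, and the same estimate applied to an arbitrary $\theta$ is the desired two-sided inequality. The main obstacle is the upper bound step: one has to exploit the matching polynomial behaviour of $\tau_\rho,\tau_\eta$ in $r(x_1)$ through the Gibbs inequality, and it is exactly the threshold $2(a+b)>1$ (the same threshold appearing in Theorem \ref{phase transition}) that separates the integrable regime from the non-integrable one.
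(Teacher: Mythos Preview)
Your proof is correct and follows essentially the same strategy as the paper: obtain the Gibbs state from Theorem~\ref{uniqgibbs}, use the bounds of Proposition~\ref{roof analytic} to reduce integrability to convergence of $\sum_n (\log n)\, n^{-2(a+b)}$, invoke Corollary~\ref{positive roof} for positivity, and finish with Theorem~\ref{uniqeq}. The only organisational difference is that the paper first establishes $\int (a\tau_\rho+b\tau_\eta)\,d\nu<\infty$ via the Mauldin--Urbanski criterion \cite[Lemma~2.2.8]{MU} (which is equivalent to your direct Gibbs-inequality estimate on $\nu([w])$), upgrades to the equilibrium state, and then bounds $\tau_\theta$ by comparing it to $a\tau_\rho+b\tau_\eta$ for large $r(x_1)$; you instead bound $\int\tau_\theta\,d\nu$ directly for arbitrary $\theta$ and specialise to $\theta=\rho,\eta$ afterwards. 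Your route is slightly more self-contained, the paper's makes the comparison between different roof functions explicit; neither buys anything the other does not.

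One small correction: your inequality $\sup\{f(x):x_1=w\}\le -2(a+b)\log r(w)+aC_\rho+bC_\eta$ tacitly assumes $a,b\ge 0$, which the lemma does not require. Replacing $aC_\rho+bC_\eta$ by $|a|C_\rho+|b|C_\eta$ (using both inequalities in Proposition~\ref{roof analytic} according to the signs of $a,b$) fixes this with no change to the rest of the argument.
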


\begin{proof}
Notice that since $P(-a\tau_\rho-b\tau_\eta)=0$,
there exists a unique shift-invariant Gibbs state $m_{-a\tau_\rho-b\tau_\eta}$ for $-a\tau_\rho-b\tau_\eta$,
see Theorem \ref{uniqgibbs}.  However, by \cite[Lemma 2.2.8]{MU},
$$\int_{\Sigma^+} a\tau_\rho+b\tau_\eta \ dm_{-a\tau_\rho-b\tau_\eta}<+\infty$$
if and only if 
$$\sum_{s\in\mathcal A} I(a\tau_\rho+b\tau_\eta,s)e^{I(-a\tau_\rho-b\tau_\eta,s)}<\infty$$
where $I(f,s)=\inf\{f(x)\ |\ x\in\Sigma,\ x_1=s\}$.
But, by Proposition \ref{roof analytic},
\begin{eqnarray*}
\sum_{a\in\mathcal A} \inf( a\tau_\rho +b\tau_\eta |_{[a]})e^{\inf ( -a\tau_\rho-b\tau_\eta |_{[a]})} & \le &
D\sum_{n\in\mathbb N}(|a|C_\rho+|b|C_\eta +2(a+b)\log n)e^{|a|C_\rho+|b|C_\eta -2(a+b)\log n} \\
&= & De^{|a|C_\rho+|b|C_\eta}\sum_{n\in \mathbb N} \frac{(|a|C_\rho+|b|C_\eta +2(a+b)\log n)}{n^{2(a+b)}}\\
\end{eqnarray*}
which converges, since $2(a+b)>1$.
Theorem \ref{uniqeq} then implies that 
$dm_{-a\tau_\rho-b\tau_\eta}$ is the unique equilibrium state for $-a\tau_\rho-b\tau_\eta$.

Proposition \ref{roof analytic}  implies that there exists $B>1$ so that  if $n$ is large enough,
then 
$$\frac{1}{B}\le \frac{\tau_\theta(x)}{a\tau_\rho(x)+b\tau_\eta(x)}\le B$$
for all $x\in\Sigma^+$ so that $r(x_1)>n$.
(For example, if $\log n>4\max\{aC_\rho+bC_\eta,C_\theta,1\}$, then we may choose $B=8(a+b).$)
Since $\tau_\theta$ is locally H\"older continuous, it is bounded on
the remainder of $\Sigma^+$. Therefore, since $\int_{\Sigma^+} a\tau_\rho+b\tau_\eta \ dm_{-a\tau_\rho-b\tau_\eta}<+\infty$, we see that
$$\int_{\Sigma^+} \tau_\theta\ dm_{-a\tau_\rho-b\tau_\eta}<+\infty.$$

Now notice that, since $\tau_\theta$ is cohomologous to a positive function $\hat\tau_\theta$, by Corollary \ref{positive roof},
$$\int_{\Sigma^+} \tau_\theta dm_{-a\tau_\rho-b\tau_\eta}=\int_{\Sigma^+} \hat\tau_\theta dm_{-a\tau_\rho-b\tau_\eta}>0.$$
\end{proof}

\medskip\noindent
{\em Proof of Theorem \ref{Manhattan}:}
Recall that $t=h(\rho)$ is the unique solution to the equation $P(-t\tau_\rho)=0$ (see the discussion at the beginning of Section \ref{hd and entropy}). So,
the intersection of the Manhattan curve with the boundary of $D$ consists of the points
$(h(\rho),0)$ and $(0,h(\eta))$.

Let 
$$\hat D= \{(a,b)\in\mathbb R^2\ | a+b>\frac{1}{2}\}.$$
Theorem \ref{phase transition} implies that  $P$ is finite on $\hat D$.
Lemma \ref{roof integrable} implies that if $a,b\in\hat D$ and \hbox{$P(-a\tau_\rho-b\tau_\eta)=0$}, then there is
an equilibrium state $m_{-a\tau_\rho-b\tau_\eta}$ for $-a\tau_\rho-b\tau_\eta$ and that
$\int_{\Sigma^+} \tau_\theta\ dm_{-a\tau_\rho-b\tau_\eta}$ is finite for all $\theta\in QC(\Gamma)$.
Theorem \ref{pressure analytic} then
implies that
$$\frac{\partial}{\partial a} P(-a\tau_\rho-b\tau_\eta)=\int_{\Sigma^+} -\tau_\rho\ dm_{-a\tau_\rho-b\tau_\eta}$$
and
$$\frac{\partial}{\partial b} P(-a\tau_\rho-b\tau_\eta)=\int_{\Sigma^+} -\tau_\eta\ dm_{-a\tau_\rho-b\tau_\eta}.$$
Since \hbox{$\int_{\Sigma^+}-\tau_\rho \ dm_{-a\tau_\rho-b\tau_\eta}$} and 
\hbox{$\int_{\Sigma^+} -\tau_\eta \ dm_{-a\tau_\rho-b\tau_\eta}$} are both non-zero,
$P$ is a submersion on $\hat D$.
Since $P$ is analytic on $\hat D$, the implicit function theorem then implies that
$$\widehat{\mathcal C}(\rho,\eta)=\{ (a,b)\in \hat D \ | \ P(-a\tau_\rho-b\tau_\eta)=0\}$$
is an analytic curve and that if $(a,b)\in\mathcal C(\rho,\eta)$ then the slope of
the tangent line to $\mathcal C(\rho,\eta)$ at $(a,b)$ is given by
$$c(a,b)=-\frac{\int_{\Sigma^+}\tau_\eta\ dm_{-a\tau_\rho-b\tau_\eta}}{\int_{\Sigma^+} \tau_\rho\ dm_{-a\tau_\rho-b\tau_\eta}}.$$

Notice that $\mathcal C(\rho,\eta)$ is the lower boundary of the
region 
$$\widehat{\mathcal C}(\rho,\eta)=\{(a,b)\ |\  Q_{\rho,\eta}^{a,b}(1)<\infty\}$$
The H\"older inequality implies that if $(a,b),(c,d)\in\widehat{\mathcal C}(\rho,\eta)$ and $t\in[0,1]$, then
$$Q_{\rho,\eta}^{ta+(1-t)c,tb+(1-t)d}\le Q(a,b)^tQ(c,d)^{1-t}$$
so $\widehat{\mathcal C}(\rho,\eta)$ is convex. Therefore, $\mathcal C(\rho,\eta)$ is convex.

A convex analytic curve is strictly convex 
if and only if it is not a line, so it remains to show that $\rho$ and $\eta$ are conjugate in ${\rm Isom}(\mathbb H^3)$ if
$\mathcal C(\rho,\eta)$ is a straight line.
So suppose that $\mathcal C(\rho,\eta)$ is a straight line with slope $c=-\frac{h(\rho)}{h(\eta)}$. 
In particular,
\begin{equation}
\label{slope value}
\frac{h(\rho)}{h(\eta)}=-c=-c(h(\rho),0)=
\frac{\int_{\Sigma^+} \tau_\eta dm_{-h(\rho)\tau_\rho}}{\int_{\Sigma^+} \tau_\rho dm_{-h(\rho)\tau_\rho}}=-c(0,h(\eta))
=\frac{\int_{\Sigma^+}  \tau_\eta dm_{-h(\eta)\tau_\eta}}{\int_{\Sigma^+}  \tau_\rho dm_{-h(\eta)\tau_\eta}}.
\end{equation}

By definition,
$$h(m_{-h(\eta)\tau_\eta})-h(\eta)\int_{\Sigma^+} \tau_\eta \ dm_{-h(\eta)\tau_\eta}=0$$
so, applying equation (\ref{slope value}), we see that 
$$h(m_{-h(\eta)\tau_\eta})-h(\rho)\int_{\Sigma^+} \tau_\rho\ dm_{-h(\eta)\tau_\eta}
=h(\eta)\int_{\Sigma^+} \tau_\eta \ dm_{-h(\eta)\tau_\eta}-h(\rho)\int_{\Sigma^+} \tau_\rho\ dm_{-h(\eta)\tau_\eta}=0.$$
Since $P(-h(\rho)\tau_\rho)=0$, this implies that $m_{-h(\eta)\tau_\eta}$ is an equilibrium measure for
$-h(\rho)\tau_\rho$. Therefore, by uniqueness of equilibrium measures we see that
$m_{-h(\eta)\tau_\eta}=m_{-h(\eta)\tau_\rho}$. Sarig  \cite[Thm. 4.8]{sarig-2009} 
showed that this only happens
when $-h(\rho)\tau_\rho$ and $-h(\eta)\tau_\eta$ are cohomologous,  so the Livsic Theorem
\cite[Thm. 1.1]{sarig-2009} (see also Mauldin-Urbanski \cite[Thm. 2.2.7]{MU}) implies that
$$\ell_\rho(\gamma)=\frac{h(\eta)}{h(\rho)}\ell_\eta(\gamma)$$
for all $\gamma\in\Gamma$. 
Kim \cite[Th, 3]{kim} proved that if $\ell_\rho(\gamma)=c\ell_\eta(\gamma)$
for all $\gamma\in\Gamma$, then $\rho$ and $\eta$ are conjugate in 
${\rm Isom}(\mathbb H^3)$. So,  we have completed the proof.
\eproof

As a nearly immediate corollary one obtains a generalization of the rigidity results of 
Bishop-Steger \cite{bishop-steger} and Burger \cite{burger}.

\begin{corollary}\label{dualrigidity}
If $\rho,\eta\in  QC(\Gamma)$ and $(a,b)\in D$, then
$$h^{a,b}(\rho,\eta)\le \frac{h(\rho)h(\eta)}{bh(\rho)+a h(\eta)}$$
with equality if and only if $\rho$ and $\eta$ are conjugate in ${\rm Isom}(\mathbb H^3)$.
\end{corollary}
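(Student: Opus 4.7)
The plan is to deduce the corollary directly from the Manhattan curve theorem (Theorem~\ref{Manhattan}) by interpreting $h^{a,b}(\rho,\eta)$ as the data that locates a point on $\mathcal{C}(\rho,\eta)$, and then invoking convexity.

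First, I observe the scaling identity $h^{sa,sb}(\rho,\eta) = s^{-1} h^{a,b}(\rho,\eta)$ for $s>0$, which follows immediately from the change of variable $T\mapsto T/s$ in the defining limit. Combined with the equivalence $h^{a,b}(\rho,\eta)=1 \iff P(-a\tau_\rho-b\tau_\eta)=0$ recalled at the start of Section~6, this yields the key identification: if $s=h^{a,b}(\rho,\eta)$, then $(sa,sb)\in\mathcal{C}(\rho,\eta)$.

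Next, by Theorem~\ref{Manhattan}, $\mathcal{C}(\rho,\eta)$ is convex with endpoints $(h(\rho),0)$ and $(0,h(\eta))$. The chord joining these endpoints is the line
\[
\bigl\{(a,b) : bh(\rho)+ah(\eta)=h(\rho)h(\eta)\bigr\}.
\]
Since $\mathcal{C}(\rho,\eta)$ is the graph of a convex function whose endpoints lie on this chord, the curve lies on or below the chord. Applying this to the point $(sa,sb)\in\mathcal{C}(\rho,\eta)$ gives $s\bigl(bh(\rho)+ah(\eta)\bigr)\le h(\rho)h(\eta)$, which rearranges to the desired bound.

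For the rigidity statement, I focus on $a,b>0$, so that the point $(sa,sb)$ is interior to the chord. Equality in the bound forces $(sa,sb)$ to lie on the chord while also belonging to $\mathcal{C}(\rho,\eta)$. If $\rho$ and $\eta$ are not conjugate in ${\rm Isom}(\mathbb H^3)$, Theorem~\ref{Manhattan}(3) asserts that $\mathcal{C}(\rho,\eta)$ is strictly convex, hence meets the chord only at its two endpoints, contradicting the existence of the interior intersection. Conversely, if $\rho$ and $\eta$ are conjugate then $\tau_\rho=\tau_\eta$ (up to cohomology), so $\mathcal{C}(\rho,\eta)$ coincides with the chord and equality is automatic. (The boundary cases $a=0$ or $b=0$ give equality without rigidity, corresponding to the endpoints of the chord.) There is no serious obstacle beyond packaging Theorem~\ref{Manhattan} correctly; all the analytic content has already been absorbed into that theorem.
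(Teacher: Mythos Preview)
Your proof is correct and follows exactly the approach implicit in the paper, which states the result as a ``nearly immediate corollary'' of Theorem~\ref{Manhattan} without spelling out the details. Your observation that the boundary cases $a=0$ or $b=0$ always yield equality (since $(sa,sb)$ is then an endpoint of the chord) is apt: the rigidity clause as written in Section~6 with $(a,b)\in D$ only holds for $a,b>0$, consistent with the version stated in the introduction.
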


\section{Pressure intersection}

We define the {\em pressure intersection} on $ QC(\Gamma)\times  QC(\Gamma)$ given by
$$I(\rho,\eta)=\frac{\int_{\Sigma^+} \tau_\eta\ dm_{-h(\rho)\tau_\rho}}{\int_{\Sigma^+} \tau_\rho\ dm_{-h(\rho)\tau_\rho}}.$$
It follows from Lemma \ref{roof integrable} that $I(\rho,\eta)$ is well-defined.
We also define a {\em renormalized pressure intersection}
$$J(\rho,\eta)=\frac{h(\eta)}{h(\rho)} I(\rho,\eta).$$

We notice that the pressure intersection and renormalized pressure intersection vary analytically in $\rho$ and $\eta$.

\begin{proposition}
\label{intersectionanalytic}
Both $I(\rho,\eta)$ and $J(\rho,\eta)$ vary analytically over $QC(\Gamma)\times QC(\Gamma)$.
\end{proposition}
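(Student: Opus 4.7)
The plan is to exhibit both the numerator and the denominator of $I(\rho,\eta)$ as partial derivatives of a real analytic pressure function, and then deduce analyticity of their quotient. Concretely, I will consider the three-parameter family of potentials
$$g_{\rho,\eta,s} = -h(\rho)\tau_\rho - s\tau_\eta$$
parameterized by $(\rho,\eta,s)\in QC(\Gamma)\times QC(\Gamma)\times \mathbb R$. By Theorem \ref{entropy analytic} the entropy $h(\rho)$ varies analytically over $QC(\Gamma)$, and by Proposition \ref{roof analytic} the roof functions $\tau_\rho,\tau_\eta$ vary analytically in their respective parameters and are locally Hölder continuous, so $\{g_{\rho,\eta,s}\}$ is a real analytic family of locally Hölder continuous functions.

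The crucial step will be to verify that $P(g_{\rho,\eta,s})$ is finite on an open neighborhood of each $(\rho_0,\eta_0,0)$, so that Theorem \ref{pressure analytic} applies. Using the bound $2\log r(x_1)-C_\rho\le \tau_\rho(x)\le 2\log r(x_1)+C_\rho$ (and the analogous two-sided bound for $\tau_\eta$) from Proposition \ref{roof analytic}, together with the fact that $\#r^{-1}(n)\le D$ for all $n$, a direct estimate on suprema over cylinders gives
$$Z_1(g_{\rho,\eta,s})\le D\, e^{h(\rho)C_\rho+|s|C_\eta}\sum_{n\ge 1} n^{-2(h(\rho)+s)},$$
which is finite whenever $h(\rho)+s>\tfrac12$. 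Since $h(\rho_0)\ge 1$ (by the remark after Theorem \ref{entropy and pressure}), this holds on an open neighborhood of $(\rho_0,\eta_0,0)$, and Theorem \ref{pressure analytic} then gives analyticity of $P(g_{\rho,\eta,s})$ on this neighborhood. Moreover, since $m_{g_{\rho_0,\eta_0,0}}=m_{-h(\rho_0)\tau_{\rho_0}}$, Lemma \ref{roof integrable} (with $a=h(\rho_0)$, $b=0$) ensures the integrability hypothesis $-\int g_{\rho,\eta,s}\,dm_{g_{\rho_0,\eta_0,0}}<\infty$ needed to invoke the derivative formula.

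The derivative formula in Theorem \ref{pressure analytic} applied with $v=\partial_s$ at $s=0$ then yields
$$\frac{\partial}{\partial s}\bigg|_{s=0} P(g_{\rho,\eta,s}) = -\int_{\Sigma^+} \tau_\eta\,dm_{-h(\rho)\tau_\rho},$$
which, being a partial derivative of a real analytic function on the open neighborhood, is real analytic in $(\rho,\eta)$. Applying the analogous argument to the two-parameter family $\{-(h(\rho)+t)\tau_\rho\}_{(\rho,t)}$ shows that $\int\tau_\rho\,dm_{-h(\rho)\tau_\rho}$ is real analytic in $\rho$, and Lemma \ref{roof integrable} ensures that this integral is strictly positive. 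Hence $I(\rho,\eta)$ is real analytic on $QC(\Gamma)\times QC(\Gamma)$, and $J(\rho,\eta)=(h(\eta)/h(\rho))I(\rho,\eta)$ is then analytic as well, since $h$ is analytic and positive. The main technical obstacle is verifying finiteness of the pressure on a two-sided neighborhood of $s=0$: for $s<0$ a direct application of Theorem \ref{phase transition} does not suffice, and one genuinely needs the \emph{upper} bound on $\tau_\eta$ from Proposition \ref{roof analytic} in addition to the lower bound in order to control $Z_1$.
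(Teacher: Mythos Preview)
Your proof is correct. It differs from the paper's argument mainly in organization: the paper considers $P(-a\tau_\rho-b\tau_\eta)$ as an analytic function on $QC(\Gamma)^2\times\hat D$, invokes the proof of Theorem~\ref{Manhattan} to see that $P$ is a submersion there, and then reads off $-I(\rho,\eta)$ as the slope of the tangent line to the analytic level set $P^{-1}(0)$ at $(\rho,\eta,h(\rho),0)$, with analyticity of the slope coming from the implicit function theorem. You instead substitute $a=h(\rho)$ from the start and differentiate directly in the auxiliary parameter $s$, identifying numerator and denominator of $I$ separately as partial derivatives of an analytic pressure function. Both routes rest on the same ingredients (Proposition~\ref{roof analytic}, Theorem~\ref{pressure analytic}, Lemma~\ref{roof integrable}, Theorem~\ref{entropy analytic}); yours is somewhat more self-contained in that it does not appeal back to the Manhattan curve argument, while the paper's version makes the geometric interpretation via $\mathcal C(\rho,\eta)$ explicit. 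One small remark: your closing comment that Theorem~\ref{phase transition} ``does not suffice'' for $s<0$ is overly cautious, since that theorem is stated for all $a+b>0$ without sign restrictions on $a,b$; but your direct $Z_1$ estimate is of course valid and makes the argument independent of how carefully that earlier proof handled the negative-coefficient case.
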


\begin{proof}
Notice that, by Theorem \ref{phase transition}, Proposition \ref{roof analytic} and Theorem \ref{pressure analytic}, 
$P=P(-a\tau_\rho-b\tau_\eta)$ is analytic on 
$$R=\{(\rho,\eta,(a,b),t)\in QC(\Gamma)\times QC(\Gamma)\times\hat D\}.$$
Since we observed, in the proof of Theorem \ref{Manhattan}, that  the restriction of $P$ to $\{\rho\}\times\{\eta\}\times \hat D$ 
is a submersion for all $\rho,\eta\in QC(\Gamma)$, $P$ itself is a submersion, and
$V=P^{-1}(0)\cap R$ is an analytic submanifold of $R$ of codimension one. 
Then $-I(\rho,\eta)$ is the slope of the tangent line to 
$V\cap \{(\rho,\eta)\times\hat D\}$ at the point $(\rho,\eta,(h(\rho),0))$, so $I(\rho,\eta)$ is analytic. Theorem 
\ref{entropy analytic} then implies that $J(\rho,\eta)$ is analytic. 
\end{proof}

We obtain the following  rigidity theorem as a consequence of Theorem \ref{Manhattan}. The inequality portion of this result
was previously established by Schapira and Tapie \cite[Cor. 3.17]{schapira-tapie}.

\begin{corollary}
\label{intersection rigidity}
If $\rho,\eta\in  QC(\Gamma)$, then
$$J(\rho,\eta)\ge 1$$
with equality if and only if $\rho$ and $\eta$ are conjugate in ${\rm Isom}(\mathbb H^3)$.
\end{corollary}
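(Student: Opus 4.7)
The plan is to deduce Corollary \ref{intersection rigidity} directly from Theorem \ref{Manhattan} by a short convex-geometry argument at the endpoint $(h(\rho),0)$ of the Manhattan curve.

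First I would parameterize $\mathcal C(\rho,\eta)$ as the graph $a=g(b)$ for $b\in[0,h(\eta)]$, as is done implicitly in the proof of Theorem \ref{Manhattan}. That theorem then gives: $g$ is convex (equivalent to the convexity of $\widehat{\mathcal C}(\rho,\eta)$, which the proof of Theorem \ref{Manhattan} established via H\"older's inequality), $g(0)=h(\rho)$, $g(h(\eta))=0$, and $g'(0)=-I(\rho,\eta)$ from the tangent-slope formula of Theorem \ref{Manhattan}.

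Now I would apply the standard convex tangent inequality:
$$g(b) \;\ge\; g(0) + g'(0)\,b \;=\; h(\rho) - I(\rho,\eta)\,b \qquad \text{for all } b\in[0,h(\eta)].$$
Evaluating at $b=h(\eta)$, where $g(h(\eta))=0$, gives $I(\rho,\eta)\,h(\eta)\ge h(\rho)$, and multiplying by $h(\eta)/h(\rho)$ yields
$$J(\rho,\eta)=\frac{h(\eta)}{h(\rho)}\,I(\rho,\eta)\;\ge\;1,$$
which is the desired inequality.

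Finally, for the rigidity, the equality $J(\rho,\eta)=1$ forces equality in the tangent inequality at $b=h(\eta)$; since the convex function $g$ then agrees with its affine tangent at both endpoints $b=0$ and $b=h(\eta)$, convexity forces $g$ to coincide with that affine function on all of $[0,h(\eta)]$. Hence $\mathcal C(\rho,\eta)$ is a straight line segment, and the strict convexity clause of Theorem \ref{Manhattan} then forces $\rho$ and $\eta$ to be conjugate in ${\rm Isom}(\mathbb H^3)$. The converse is immediate since conjugation identifies the length spectra, making $\tau_\rho$ and $\tau_\eta$ cohomologous, so $J(\rho,\eta)=1$ holds trivially. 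There is no real obstacle in this deduction --- all the analytic content is already packaged in Theorem \ref{Manhattan}.
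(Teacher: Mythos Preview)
Your proof is correct and follows essentially the same approach as the paper: both compare the tangent slope of the Manhattan curve at $(h(\rho),0)$ to the chord slope via convexity of $\mathcal C(\rho,\eta)$, and then invoke the strict-convexity clause of Theorem \ref{Manhattan} for the equality case. One small slip in wording: from $I(\rho,\eta)\,h(\eta)\ge h(\rho)$ you should \emph{divide} by $h(\rho)$ (not multiply by $h(\eta)/h(\rho)$), but the conclusion $J(\rho,\eta)\ge 1$ is of course unaffected.
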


\begin{proof}
Recall that the slope  $c=c(h(\rho),0)$  of $\mathcal C(\rho,\eta)$ at $(h(\rho),0)$
is given by
$$c=-\frac{\int_{\Sigma^+} \tau_\eta\ dm_{-h(\rho)\tau_\rho}}{\int_{\Sigma^+} \tau_\rho\ dm_{-h(\rho)\tau_\rho}}=-I(\rho,\eta).$$
However, by  Theorem \ref{Manhattan},
$$c\le -\frac{h(\rho)}{h(\eta)}$$
with equality if and only if $\rho$ and $\eta$ are conjugate in ${\rm Isom}(\mathbb H^3)$.
Our corollary follows immediately.
\end{proof}

\section{The pressure form}

We may define an analytic section $s:QF(S)\to QC(\Gamma)$ so that $s([\rho])$ is an element of the conjugacy
class of $\rho$. Choose co-prime hyperbolic elements $\alpha$ and $\beta$ in $\Gamma$ and let $s(\rho)$ be 
the unique element of $[\rho]$ so that $s(\rho)(\alpha)$ has attracting fixed point $0$ and repelling fixed point $\infty$
and $s(\rho)(\beta)$ has attracting fixed point $1$. This will allow us to abuse notation and regard $QF(S)$
as a subset of $QC(\Gamma)$.

Following Bridgeman \cite{bridgeman-wp} and McMullen \cite{mcmullen-pressure}, we define 
an analytic pressure form $\mathbb P$ on the tangent bundle $TQF(S)$ of $QF(S)$, by
letting 
$$\mathbb P_{T_{[\rho]}QF(S)}=s^*\Big({\rm Hess}\big(J(s(\rho),\cdot)\big)|_{T_{s(\rho)}s(QF(S))}\Big)$$
which we rewrite with our abuse of notation as:
$$\mathbb P_{T_{\rho}QF(S)}={\rm Hess}(J(\rho),\cdot))$$
Corollary \ref{intersection rigidity} implies that $\mathbb P$ is non-negative, i.e. $\mathbb P(v,v)\ge 0$ for all $v\in TQF(S)$.

Since $\mathbb P$ is non-negative, we can define a path pseudo-metric on $QF(S)$ by setting
$$d_{\mathbb P}(\rho,\eta)=\inf \left\{\int_0^1 \sqrt{\mathbb P(\gamma'(t),\gamma'(t))} dt\right\}$$
where the infimum is taken over all smooth paths in $QF(S)$ joining $\rho$ to $\eta$.

We now derive a standard criterion for when a tangent vector is degenerate with respect to $\mathbb P$, see
also \cite[Cor. 2.5]{BCS} and \cite[Lemma 9.3]{BCLS}.

\begin{lemma}
\label{degeneracy condition}
If $v\in T_\rho QF(S)$, then  $\mathbb P(v,v)=0$ if and only if
$$D_v \left( h\ell_\gamma\right) =0$$
for all $\gamma\in\Gamma$.
\end{lemma}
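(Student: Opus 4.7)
The plan is to identify $\mathbb{P}(v,v)$ with a normalized variance of a single locally H\"older function on $\Sigma^+$, and then use Livsic's theorem to translate the vanishing of that variance into a condition on each conjugacy class.

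Pick an analytic path $\rho_t$ in $QC(\Gamma)$ through a fixed section with $\rho_0=\rho$ and $\dot\rho_0=v$. Abbreviate $h_t=h(\rho_t)$, $\tau_t=\tau_{\rho_t}$, $m=m_{-h(\rho)\tau_\rho}$, $A=\int\tau_\rho\,dm>0$ (finite and positive by Lemma \ref{roof integrable} and Corollary \ref{positive roof}), and $B=\int\dot\tau_0\,dm$, where dots denote $d/dt|_{t=0}$. First I would differentiate the defining identity $P(-h_t\tau_t)=0$ once, using the first-derivative formula of Theorem \ref{pressure analytic} (whose integrability hypotheses are supplied by Lemma \ref{roof integrable}), to obtain $\dot h_0=-h(\rho)B/A$. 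In particular the natural derivative
\[
D_v(h\tau):=\dot h_0\,\tau_\rho+h(\rho)\dot\tau_0
\]
satisfies $\int D_v(h\tau)\,dm=\dot h_0 A+h(\rho)B=0$, so it is $m$-centered.

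Next I would compute $\mathbb{P}(v,v)=\Psi''(0)$ for $\Psi(t)=J(\rho,\rho_t)=\frac{h_t}{h(\rho)A}\int\tau_t\,dm$. Differentiating the constraint $P(-h_t\tau_t)=0$ a second time and expanding the second-order pressure derivatives via the variance formula of Theorem \ref{pressure analytic} yields an explicit expression for $\ddot h_0$ in terms of $\operatorname{Var}(\tau_\rho,m)$, $\operatorname{Var}(\dot\tau_0,m)$, their thermodynamic covariance, $A$, $B$, and $\int\ddot\tau_0\,dm$. Substituting into $\Psi''(0)$ the $\int\ddot\tau_0\,dm$ contributions cancel against the corresponding term from $\ddot h_0$, and the remaining quadratic form in $\tau_\rho$ and $\dot\tau_0$ assembles precisely into the variance of the linear combination $D_v(h\tau)$; after simplification
\[
\mathbb{P}(v,v)\;=\;\frac{1}{h(\rho)\,A}\,\operatorname{Var}\!\bigl(D_v(h\tau),\,m\bigr).
\]
Because $h(\rho)A>0$, $\mathbb{P}(v,v)=0$ is equivalent to $\operatorname{Var}(D_v(h\tau),m)=0$.

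Finally, $D_v(h\tau)$ is locally H\"older continuous since $\{\tau_\rho\}$ is a real analytic family of locally H\"older functions by Proposition \ref{roof analytic}. Sarig's Livsic theorem \cite[Thm. 1.1]{sarig-2009} then gives that the variance vanishes if and only if $D_v(h\tau)$ is cohomologous to a constant, and by $\sigma$-invariance of $m$ that constant must equal $\int D_v(h\tau)\,dm=0$. Hence $D_v(h\tau)$ is a coboundary, which is in turn equivalent to $S_n D_v(h\tau)(x)=0$ for every periodic point $x\in\mathrm{Fix}^n$ and every $n$. By the periodic-orbit part of the Ledrappier--Sarig coding proposition, each hyperbolic $\gamma\in\Gamma$ is, up to cyclic permutation, encoded by a unique such $x$ with $S_n\tau_{\rho_t}(x)=\ell_{\rho_t}(\gamma)$, so $S_nD_v(h\tau)(x)=D_v(h\ell_\gamma)$; and for parabolic $\gamma\in\Gamma$ the translation length $\ell_{\rho_t}(\gamma)$ vanishes identically on $QF(S)$, making $D_v(h\ell_\gamma)=0$ automatic. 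The main obstacle is executing the second-step cancellation cleanly and justifying differentiation under the integral sign; this is handled by the uniform estimate $\tau_\rho\asymp 2\log r(x_1)$ of Proposition \ref{roof analytic} together with Lemma \ref{roof integrable}, which control both $\tau_t$ and its $t$-derivatives along the path.
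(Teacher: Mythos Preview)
Your proof is correct and follows essentially the same strategy as the paper: identify $\mathbb{P}(v,v)$ with a positive multiple of $\mathrm{Var}(D_v(h\tau),m)$, then use the variance/coboundary criterion and Livsic to translate into the vanishing of all $D_v(h\ell_\gamma)$.

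The one difference worth noting is that the paper's computation is considerably shorter than yours. Rather than expanding $\Psi(t)=\frac{h_t}{h(\rho)A}\int\tau_t\,dm$ by the product rule, solving for $\ddot h_0$, and then tracking cancellations, the paper bundles everything into $\psi_t:=-h_t\tau_t$ and observes that $J(\rho,\rho_t)=\frac{\int\psi_t\,dm}{\int\psi_0\,dm}$ is \emph{linear} in $\psi_t$. Hence $\Psi''(0)=\frac{\int\ddot\psi_0\,dm}{\int\psi_0\,dm}$ immediately. Differentiating $P(\psi_t)=0$ twice via Theorem \ref{pressure analytic} gives $\mathrm{Var}(\dot\psi_0,m)+\int\ddot\psi_0\,dm=0$, so $\Psi''(0)=-\mathrm{Var}(\dot\psi_0,m)/\int\psi_0\,dm=\mathrm{Var}(\dot\psi_0,m)/(h(\rho)A)$, exactly your formula, with no intermediate computation of $\ddot h_0$ or covariance terms. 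This is worth internalizing: treating $-h_t\tau_t$ as the basic variable rather than $h_t$ and $\tau_t$ separately is what makes the calculation one line. Also, a small citation issue: the statement ``variance zero iff cohomologous to a constant'' is not Livsic's theorem \cite[Thm.~1.1]{sarig-2009}; the paper invokes \cite[Thm.~5.12]{sarig-2009} for that step and reserves Livsic for the subsequent periodic-orbit characterization.
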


\begin{proof}
Let $\mathcal H_0$ denote the space of pressure zero locally H\"older continuous functions on $\Sigma^+$.
We have a well-defined Thermodynamic mapping $\psi:QF(S)\to \mathcal H_0$
given by $\psi(\rho)=-h(s(\rho))\tau_{s(\rho)}$. Notice that, by Proposition \ref{roof analytic} and
Theorem \ref{entropy analytic}, $\psi(QF(S))$ is a real analytic family.

Suppose that $\{\rho_t\}_{t\in (-\epsilon,\epsilon)}$ is an one-parameter analytic family  in $QF(S)$ and $v=\dot\rho_0$.
Then
$$\frac{d^2}{dt^2} J(\rho_0,\rho_t)\Big|_{t=0}=
\frac{d^2}{dt^2} \left(\frac{\int_{\Sigma^+} \psi(\rho_t)\ dm_{\psi(\rho_0)}}{\int_{\Sigma^+} \psi(\rho_0)\ dm_{\psi(\rho_0)}}\right)
=
\frac{\int_{\Sigma^+} \ddot\psi_0\ dm_{\psi(\rho_0)}}{\int_{\Sigma^+} \psi(\rho_0)\ dm_{\psi(\rho_0)}}$$
where
$$\ddot\psi_0= \frac{d^2}{dt^2}\Big|_{t=0} \psi(\rho_t).$$
Theorem \ref{pressure analytic} implies that
$$0=\frac{d^2}{dt^2}\Big|_{t=0}P(\psi(t))=
{\rm Var}(\dot\psi_0,m_{\psi(0)})+
\int_{\Sigma^+}\ddot\psi_0\  dm_{\psi(\rho_0)}
$$
where
$$\dot\psi_0= \frac{d}{dt}\Big|_{t=0} \psi(\rho_t),$$
so
$$\frac{d^2}{dt^2} J(\rho_0,\rho_t)\Big|_{t=0}=- \frac{{\rm Var}(\dot\psi_0,m_{\psi(0)})}{\int_{\Sigma^+} \psi(\rho_0)\ dm_{\psi(\rho_0)}}.
$$

Recall, see Sarig \cite[Thm. 5.12]{sarig-2009},
that ${\rm Var}(\dot\psi_0,m_{\psi(0)})=0$ if and only if  $\dot\psi_0$ is cohomologous to a constant function $C$.
On the other hand, since $P(\psi_t)=0$ for all $t$,
the formula for the derivative of the pressure function gives that 
$$0=\frac{d}{dt}\Big|_{t=0}P(\psi_t)=\int_{\Sigma^+}\dot\psi_0\ dm_{\psi(\rho_0)}$$
so $C$ must equal 0.
However, $\dot\psi_0$ is cohomologous to 0 if and only if for all $x\in\mathrm{Fix}^n$, and all $n$,
$$0=S_n\dot\psi_0(x)=\frac{d}{dt}\Big|_{t=0} S_n\psi_t(x)=
\frac{d}{dt}\Big|_{t=0} \Big(h(\rho_t)\ell_{G(x_1)\ldots G(x_n)}(\rho_t)\Big)
$$
(see \cite[Theorem 1.1]{sarig-2009}).
Moreover, for every hyperbolic element  $\gamma\in\Gamma$, there exists $x\in\mathrm{Fix}^n$ (for some $n$) so
that $\gamma$ is conjugate to $G(x_1)\cdots G(x_n)$, so $\ell_\gamma(\rho_t)=\ell_{G(x_1)\cdots G(x_n)}(\rho_t)$ for all $t$.
If $\gamma\in\Gamma$ is not hyperbolic, then $\ell_\gamma(\rho_t)=0$ for all $t$,
so
$$\frac{d}{dt}\Big|_{t=0} \Big(h(\rho_t)\ell_\gamma(\rho_t)\Big)=0$$
in every case.
Therefore, $\dot\psi_0$ is cohomologous to 0 if and only if 
$$\frac{d}{dt}\Big|_{t=0} \Big(h(\rho_t)\ell_{\gamma}(\rho_t)\Big)=0$$
for all $\gamma\in\Gamma$.
\end{proof}

\section{Main Theorem}

We recall that a quasifuchsian representation $\rho:\Gamma\to\mathsf{PSL}(2,\mathbb C)$ is said to be {\em fuchsian} if
it is conjugate into $\mathsf{PSL}(2,\mathbb R)$, i.e. there exists $A\in \mathsf{PSL}(2,\mathbb C)$ so that
$A\rho(\gamma)A^{-1}\in\mathsf{PSL}(2,\mathbb R)$ for all $\gamma\in\Gamma$. The Fuchsian locus $F(S)\subset QF(S)$
is the set of (conjugacy classes of) fuchsian representations.

We say that $v\in T_\rho QF(S)$ is a {\em pure bending} vector if 
$v=\frac{\partial}{\partial t}\rho_t$, $\rho=\rho_0$ is Fuchsian and $\rho_{-t}$ is the complex conjugate of $\rho_{t}$ for all $t$.
Since the Fuchsian locus $F(S)$ is the fixed point set of the action of complex conjugation on $QF(S)$
and the collection of pure bending vectors at a point in $F(S)$ is half-dimensional,
one gets a decomposition 
$$T_\rho QF(S)=T_\rho F(S)\oplus  B_\rho$$
where $B_\rho$ is the space of pure bending vectors at $\rho$.
If $v$ is a pure bending vector at $\rho\in F(S)$, then $v$ is tangent to a path obtained
by  bending $\rho$ by a (signed)  angle $t$ along
some measured lamination $\lambda$ (see Bonahon \cite[Section 2]{bonahon-almost} for details).

We are finally ready to show that our pressure form is degenerate only along pure bending vectors.

\begin{theorem}
\label{main theorem}
If $S$ is a compact hyperbolic surface with non-empty boundary, then
the pressure form $\mathbb P$ defines an ${\rm Mod}(S)$-invariant path metric $d_{\mathbb P}$ on $QF(S)$ which is
an analytic Riemannian metric except on the Fuchsian locus.

Moreover, if $v\in T_\rho(QF(S))$, then $\mathbb P(v,v)=0$ if and only if  $\rho$ is Fuchsian and $v$ is a pure bending vector.
\end{theorem}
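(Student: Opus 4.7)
The plan is to dispense with the formal properties first, then focus on the null-space description. Analyticity of $\mathbb{P} = \operatorname{Hess}(J(\rho,\cdot))$ on $TQF(S)$ is immediate from analyticity of $J$ (Proposition \ref{intersectionanalytic}); positive semi-definiteness follows from Corollary \ref{intersection rigidity}, which exhibits $\rho$ as a critical minimum of $J(\rho,\cdot)$; and $\operatorname{Mod}(S)$-invariance is immediate since $J$ depends only on the conjugacy class structure of $\Gamma$. These yield an analytic, $\operatorname{Mod}(S)$-invariant, positive semi-definite 2-tensor $\mathbb{P}$, hence the path pseudo-metric $d_\mathbb{P}$; once the null-space description is in hand, $d_\mathbb{P}$ will be an analytic Riemannian metric on $QF(S) \setminus F(S)$ and a genuine path metric on all of $QF(S)$ (the latter using Kao's \cite{kao-pm} result that $\mathbb{P}|_{TF(S)}$ restricts to the Weil-Petersson metric on $\mathcal{T}(S)$).

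By Lemma \ref{degeneracy condition}, $\mathbb{P}(v,v)=0$ is equivalent to $D_v(h\ell_\gamma)=0$ for every $\gamma \in \Gamma$. Since parabolic elements contribute trivially ($\ell_\gamma \equiv 0$), this becomes $D_v\ell_\gamma = c\,\ell_\gamma(\rho)$ for every hyperbolic $\gamma$, where $c := -D_v(\log h)$ is independent of $\gamma$. For the ``if'' direction, take $\rho \in F(S)$ and $v \in B_\rho$ tangent to a bending path $\rho_t$ with $\rho_{-t} = \overline{\rho_t}$. Complex conjugation is an isometry of $\mathbb{H}^3$, so $\ell_\gamma(\rho_t) = \ell_\gamma(\rho_{-t})$ is even in $t$, giving $D_v\ell_\gamma = 0$. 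Bowen--Sullivan entropy rigidity (second remark after Theorem \ref{entropy analytic}) shows $h$ attains a global minimum of $1$ on $F(S)$, so $D_v h = 0$. Hence $D_v(h\ell_\gamma)=0$ and $\mathbb{P}(v,v)=0$.

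For the converse at a Fuchsian $\rho$, the same entropy rigidity gives $D_v h = 0$, forcing $c=0$ and $D_v\ell_\gamma=0$ for all hyperbolic $\gamma$. Writing $v = v_R + v_B$ with $v_R \in T_\rho F(S)$ and $v_B \in B_\rho$, the ``if'' computation gives $D_{v_B}\ell_\gamma=0$, so $D_{v_R}\ell_\gamma = 0$; Wolpert's theorem (applicable in the finite-area setting, cf.\ Kao \cite{kao-pm}) that length differentials span the cotangent space to $\mathcal{T}(S)$ then forces $v_R=0$, so $v = v_B$ is pure bending.

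The main obstacle is the non-Fuchsian case: we must show that $\mathbb{P}(v,v)=0$ at $\rho \notin F(S)$ forces $v = 0$. The hypothesis $D_v\ell_\gamma = c\,\ell_\gamma(\rho)$ is equivalent to $D_v(\ell_\gamma/\ell_{\gamma'})=0$ for every hyperbolic pair, so $v$ infinitesimally preserves the projective marked length spectrum. Kim's rigidity theorem \cite{kim} (already used in Theorem \ref{Manhattan}) forces two quasifuchsian representations with proportional length spectra to be $\operatorname{Isom}(\mathbb{H}^3)$-conjugate, and the constant of proportionality is necessarily $1$ since hyperbolic $3$-manifolds admit no rescaling; hence the projective length spectrum map on $QF(S)$ is globally injective modulo complex conjugation, and since $\rho$ and $\overline{\rho}$ are distinct at non-Fuchsian $\rho$, the map is locally injective near $\rho$. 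The plan, following Bridgeman's closed-case argument \cite{bridgeman-wp}, is to upgrade this local injectivity to the statement that the derivative of the projective length spectrum is injective on $T_\rho QF(S)$ at every non-Fuchsian $\rho$, which then forces $v = 0$. This upgrade from global to infinitesimal injectivity is the technical crux, paralleling Bridgeman's closed-surface treatment but requiring additional care for the countable Markov coding and the presence of parabolic elements.
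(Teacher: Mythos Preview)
Your treatment of the formal properties, the ``if'' direction, and the converse at Fuchsian points is essentially correct and close in spirit to the paper (though the paper does not invoke entropy rigidity to obtain $c=0$; it gets this uniformly from Bridgeman's trace computation, as explained below).

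The genuine gap is in the non-Fuchsian case. You propose to deduce infinitesimal rigidity of the projective length spectrum from Kim's global rigidity, calling this ``the technical crux'' and describing it as ``following Bridgeman's closed-case argument.'' But global injectivity of a smooth map does not imply injectivity of its differential, and this is \emph{not} what Bridgeman does. The actual argument (reproduced in the paper as Lemma~\ref{bridgeman calc}) is a direct algebraic computation in $\mathsf{PSL}(2,\mathbb{C})$: starting from the degeneracy condition $D_v\log|\lambda_\gamma|=k\log|\lambda_\gamma(\rho)|$, one expands $\log|\lambda_{\alpha^n\gamma}|$ asymptotically in $n$ for a suitable pair $\alpha,\gamma$, differentiates, and matches coefficients. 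This forces $\lambda_\gamma(\rho)^2$ and $\mathrm{tr}_\gamma(\rho)^2$ to be real for every $\gamma\in\Gamma$, and simultaneously forces $k=0$. Since all squared traces are real, $\rho(\Gamma)$ lies in a proper real Zariski-closed subgroup of $\mathsf{PSL}(2,\mathbb{C})$; combined with the fact that no element of $\rho(\Gamma)$ swaps the complementary components of the limit Jordan curve, this forces $\rho$ to be Fuchsian. No appeal to Kim's theorem, and no ``upgrade'' of global to infinitesimal rigidity, is involved.

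Moreover, your expectation that this step ``requires additional care for the countable Markov coding and the presence of parabolic elements'' is misplaced: the trace computation is pure $\mathsf{PSL}(2,\mathbb{C})$ algebra and is entirely independent of the coding. The paper explicitly notes that Bridgeman's Lemma~7.4 goes through ``nearly immediately'' in this setting; the only new ingredient is the Zariski-density argument replacing Bridgeman's more technical Lemma~15 to pass from ``all $\mathrm{tr}_\gamma^2$ real'' to ``$\rho$ is Fuchsian.''
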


\begin{proof}
If $v$ is a pure bending vector, then we may write $v=\dot\rho_0$ where
$\rho_{-t}$ is the complex conjugate of $\rho_t$ for all $t$, so $h\ell_\gamma(\rho_t)$ is an even function for
all $\gamma\in\Gamma$. Therefore, $D_v h\ell_\gamma=0$
for all $\gamma\in\Gamma$, so Lemma \ref{degeneracy condition} implies that $\mathbb P(v,v)=0$.

Our main work is the following converse:

\begin{proposition}
\label{only bending degenerate}
Suppose that $v\in T_\rho QF(S)$. If $\mathbb P(v,v)=0$ and $v\ne 0$, then 
$v$ is a pure bending vector.
\end{proposition}

Recall, see \cite[Lemma 13.1]{BCLS}, that if a Riemannian metric on a manifold $M$ is non-degenerate on
the complement of a submanifold $N$ of codimension at least one and the restriction of
the Riemannian metric to $TN$ is non-degenerate, then the associated path pseudo-metric
is a metric. We will see in Corollary \ref{geometricpressureform}  that the pressure metric is mapping class group invariant.
Our theorem then follows from Proposition \ref{only bending degenerate} and the fact,
established by Kao \cite{kao-pm},
that $\mathbb P$ is non-degenerate on the tangent space to the Fuchsian locus. 
\end{proof}

\medskip\noindent
{\em Proof of Proposition \ref{only bending degenerate}.}
Now suppose that $v\in T_\rho QF(S)$ and $\mathbb P(v,v)=0$.
One first observes, following Bridgeman \cite{bridgeman-wp},  that since, by Lemma \ref{degeneracy condition},
$D_v \left( h\ell_\gamma\right) =0$
for all $\gamma\in\Gamma$,
\begin{equation}
\label{ell degeneracy}
D_v\ell_\gamma=k\ell_\gamma(\rho)
\end{equation}
for all $\gamma\in\Gamma$, where  $k=-\frac{D_vh}{h(\rho)}$.

If $\gamma\in\Gamma$, then one can locally define analytic functions $tr_\gamma(\rho)$ and $\lambda_\gamma(\rho)$
which are the trace and eigenvalue of largest modulus of (some lift of) $\rho(\gamma)$. Notice that 
$\ell_\gamma(\rho)=2\log |\lambda_\gamma(\rho)|$, so we  can express
our degeneracy criterion (\ref{ell degeneracy}) as
\begin{equation}
\label{lambda degeneracy}
D_v\log|\lambda_\gamma|=k\log|\lambda_\gamma(\rho)|
\end{equation}
for all $\gamma\in\Gamma$.

We observe that Bridgeman's Lemma 7.4 \cite{bridgeman-wp} goes through nearly immediately in our setting.
We state the portion of his lemma we will need and provide a brief sketch of the proof for the reader's convenience.

\begin{lemma} {\rm (Bridgeman \cite[Lemma 7.4]{bridgeman-wp})}
\label{bridgeman calc}
If $\mathbb P(v,v)=0$, $v\in T_\rho QF(S)$,  $v\ne 0$ and $\gamma\in\Gamma$,
then $\lambda_\gamma(\rho)^2$ and $tr_\gamma(\rho)^2$ are both real.

Moreover, if $D_v tr_\alpha\ne 0$, then
$Re\left(\frac{D_v \lambda_\alpha}{\lambda_\alpha(\rho)}\right)=0$.
\end{lemma}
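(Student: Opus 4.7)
The plan is to recast the hypothesis \eqref{lambda degeneracy} in a purely algebraic form and then combine it across many elements of $\Gamma$ via trace identities in $\mathrm{SL}(2,\mathbb{C})$. Choosing lifts to $\mathrm{SL}(2,\mathbb{C})$ with $|\lambda_\gamma|>1$, the relation $\operatorname{tr}_\gamma = \lambda_\gamma+\lambda_\gamma^{-1}$ gives, by differentiation,
\[
\frac{D_v\lambda_\gamma}{\lambda_\gamma} \;=\; \frac{D_v\operatorname{tr}_\gamma}{\lambda_\gamma-\lambda_\gamma^{-1}},
\]
so \eqref{lambda degeneracy} becomes $\operatorname{Re}\!\bigl(D_v\operatorname{tr}_\gamma/(\lambda_\gamma-\lambda_\gamma^{-1})\bigr)=k\log|\lambda_\gamma|$. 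This turns the hypothesis into a single scalar identity per element, expressed in terms of holomorphic trace functions on the character variety.

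Next I would apply the Fricke trace identity
$\operatorname{tr}_{\alpha\beta}+\operatorname{tr}_{\alpha\beta^{-1}} = \operatorname{tr}_\alpha\operatorname{tr}_\beta$
together with its $D_v$-derivative, and feed the four elements $\alpha,\beta,\alpha\beta,\alpha\beta^{-1}$ into the rewritten version of \eqref{lambda degeneracy}. Since the Fricke identity is a complex-analytic equation while \eqref{lambda degeneracy} constrains only real parts, the combined system is overdetermined unless appropriate combinations of the $\lambda_\gamma$'s are forced to be real. Iterating over a rich family of pairs in $\Gamma$ (possible because, for $S$ with non-empty boundary, $\Gamma$ contains a nonabelian free subgroup with independent loxodromic generators), one expects to force $\arg(\lambda_\gamma^2)\in\pi\mathbb{Z}$ for every hyperbolic $\gamma$, yielding $\lambda_\gamma^2\in\mathbb{R}$ and hence $\operatorname{tr}_\gamma^2=(\lambda_\gamma+\lambda_\gamma^{-1})^2\in\mathbb{R}$, which is the first claim.

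For the moreover statement, with $\lambda_\alpha^2\in\mathbb{R}$ already in hand, the identity
$D_v\operatorname{tr}_\alpha = (1-\lambda_\alpha^{-2})\,D_v\lambda_\alpha$
shows that $D_v\operatorname{tr}_\alpha\neq 0$ forces $D_v\lambda_\alpha\neq 0$. Applying \eqref{lambda degeneracy} to the powers $\alpha^n$, whose eigenvalues satisfy $\lambda_{\alpha^n}=\lambda_\alpha^n$, and combining with the asymptotics of $D_v\lambda_{\alpha^n}/\lambda_{\alpha^n}$ (which is $n\,D_v\lambda_\alpha/\lambda_\alpha$) against the real-part constraint should pin down $k=0$. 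Then \eqref{lambda degeneracy} itself yields $\operatorname{Re}(D_v\lambda_\alpha/\lambda_\alpha)=k\log|\lambda_\alpha|=0$.

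The main obstacle will be the second paragraph: turning the overdetermined system of real-part constraints and holomorphic trace identities into a genuine reality conclusion for \emph{every} $\gamma\in\Gamma$. The essential difficulty is that \eqref{lambda degeneracy} only controls real parts, while the Fricke identities are complex-analytic, so extracting the imaginary-part conclusions will require either a careful asymptotic analysis using the family $\{\alpha^n\beta\}_{n\geq 1}$ with its explicit eigenvalue expansion $\lambda_{\alpha^n\beta}\sim c\,\lambda_\alpha^n$ (which should isolate a secondary constraint on the matrix entries of $\rho(\beta)$ in the eigenbasis of $\rho(\alpha)$), or a clever choice of commutator-type test words whose complex lengths form a sufficiently generic system to determine the phases.
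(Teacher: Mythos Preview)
Your proposal has two genuine gaps.

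First, the ``moreover'' argument via powers of $\alpha$ is vacuous. For $\gamma=\alpha^n$ one has $\lambda_{\alpha^n}=\lambda_\alpha^n$ and $D_v\lambda_{\alpha^n}/\lambda_{\alpha^n}=n\,D_v\lambda_\alpha/\lambda_\alpha$, so \eqref{lambda degeneracy} applied to $\alpha^n$ reads
\[
n\,\mathrm{Re}\!\left(\frac{D_v\lambda_\alpha}{\lambda_\alpha}\right)=k\,n\log|\lambda_\alpha|,
\]
which is $n$ times the $n=1$ equation and carries no new information. You cannot extract $k=0$ this way; indeed the lemma itself does \emph{not} assert $k=0$ (that is concluded only afterward, in the proof of Proposition~\ref{only bending degenerate}, by combining the ``moreover'' clause with \eqref{lambda degeneracy}). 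So your route to $\mathrm{Re}(D_v\lambda_\alpha/\lambda_\alpha)=0$ is circular.

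Second, the Fricke-identity step is only a heuristic. You say the system is ``overdetermined'' and one ``expects'' reality, but there is no mechanism offered: \eqref{lambda degeneracy} constrains only $\mathrm{Re}(D_v\lambda_\gamma/\lambda_\gamma)$, while the Fricke identity relates the full complex traces, and you have not indicated how to separate out the $\mathrm{Im}$-constraints on $\lambda_\gamma$ (as opposed to on $D_v\lambda_\gamma$). You concede this is the main obstacle and then list as a fallback exactly the method the paper uses: normalize $\rho(\alpha)$ diagonally, write $\rho(\gamma)=\begin{pmatrix} a&b\\ c&d\end{pmatrix}$ for a non-commuting $\gamma$, expand $\log|\lambda_{\alpha^n\gamma}|$ to second order in $\lambda_\alpha^{-2n}$, differentiate, and apply \eqref{lambda degeneracy}. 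The linear-in-$n$ terms cancel by \eqref{lambda degeneracy} itself; it is the next-order term that yields the key constraint
\[
\mathrm{Re}\!\left(\frac{D_v\lambda_\alpha}{\lambda_\alpha}\cdot\frac{ad-1}{a^2}\right)=0,
\]
from which a rational/irrational dichotomy on $\arg(\lambda_\alpha^2)$ forces $\lambda_\alpha^2\in\mathbb R$. Reality of $a^2,d^2,ad$ then follows from $\mathrm{tr}_{\alpha^n\gamma}^2\in\mathbb R$ for large $n$, and feeding this back into the displayed constraint gives $\mathrm{Re}(D_v\lambda_\alpha/\lambda_\alpha)=0$ directly, without first proving $k=0$. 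Your sketch never reaches this second-order computation, which is where the content lies.
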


\begin{proof}
Suppose first that $D_vtr_\alpha\ne0$.
Since 
$$D_v(tr_\alpha)=D_v\lambda_\alpha\left(\frac{\lambda_\alpha^2-1}{\lambda_\alpha^2}\right)$$
we may conclude that $D_v\lambda_\alpha\ne0.$
Choose $\gamma\in\Gamma$, so that $\gamma$ is hyperbolic and does not commute with $\alpha$.
He then normalizes so that (the lift of)  $\rho(\alpha)=\begin{bmatrix} \lambda_\alpha & 0\\ 0& \lambda_\alpha^{-1}\\ \end{bmatrix}$ 
and (the lift of) 
$\rho(\gamma)=\begin{bmatrix} a & b\\ c& d\\ \end{bmatrix}$ where $a,b,c,d$ are all functions defined on a neighborhood of $\rho$,
such that $a$ and $d$ are non-zero. He then
computes that
$$\log |\lambda_{\alpha^n\gamma}|=n\log|\lambda_\gamma|+\log |a|+
{\rm Re}\left(\lambda_\alpha^{-2n}\left(\frac{ad-1}{a^2}\right)\right)+O(|\lambda_\alpha^{-4n}|).$$
He differentiates this equation and applies equation (\ref{lambda degeneracy}) to
conclude that 
\begin{equation}
\label{reality}
{\rm Re}\left( \frac{D_v\lambda_\alpha}{\lambda_\alpha(\rho)}\left(\frac{a(\rho)d(\rho)-1}{a(\rho)^2}\right)\right)=0.
\end{equation}
A final analysis, which breaks down into the consideration of the cases where the argument of $\lambda_\alpha^2(\rho)$ 
is rational or irrational, yields that  $\lambda_\alpha(\rho)^2$ is real. 
Since $tr^2_\alpha=\lambda_\alpha^2+2+\lambda_\alpha^{-2}$, we conclude that $tr_\alpha^2(\rho)$ is real.

One may further  differentiate the equation 
$$tr_{\alpha^n\gamma}=a\lambda_\alpha^n+d\lambda_a^{-n}$$
to conclude that
$$\lim \left(\frac{D_vtr_{\alpha^n\gamma}}{n\lambda_\alpha(\rho)^n}\right) =\frac{a(\rho)D_v\lambda_\alpha}{\lambda_\alpha(\rho)}$$
so $D_vtr_{\alpha^n\gamma}\ne 0$ is non-zero for all large enough $n$. Therefore, by the above paragraph,
$$tr_{\alpha^n\gamma}^2(\rho)=a(\rho)^2\lambda_\alpha(\rho)^{2n}+2ad(\rho)+d(\rho)^2\lambda_\alpha(\rho)^{-2n}$$ 
is real for all large enough $n$. Taking limits allows one to conclude that $a(\rho)^2$, $d(\rho)^2$ and $a(\rho)d(\rho)$ are real. 
Equation (\ref{reality})
then yields that 
$Re\left(\frac{D_v \lambda_\alpha}{\lambda_\alpha(\rho)}\right)=0$.  This completes the proof when $D_vtr_\alpha\ne 0$.

Now suppose that $D_vtr_\gamma=0$. If $\gamma$ is parabolic, $\lambda_\gamma(\rho)^2=1$ and $tr^2_\gamma(\rho)=4$
which are both real, so we may suppose that $\gamma$ is hyperbolic.
Since there are finitely many elements $\{\alpha_1,\ldots,\alpha_n\}$ of 
$\Gamma$ so that $\rho\in QF(S)$ is
determined by $\{tr_{\alpha_1}(\rho)^2,\ldots,tr_{\alpha_n}(\rho)^2\}$, see \cite[Lemma 2.5]{CCHS}, and
trace functions are analytic, there exists $\alpha\in\Gamma$, so that $D_vtr_\alpha\ne0$. 
The above analysis then yields that $a(\rho)^2$, $d(\rho)^2$ and $a(\rho)d(\rho)$ are all real. 
Therefore, 
$$tr_\gamma(\rho)^2=a(\rho)^2+2a(\rho)d(\rho)+d(\rho)^2=
\lambda_\gamma(\rho)^2+2+\lambda_\gamma(\rho)^{-2}$$
is real. So, we may conclude that $\lambda_\gamma(\rho)^2$ is real in this case as well, which completes the proof.
\end{proof}

Since $v\ne 0$, there exists  $\alpha\in\Gamma$ so that $D_vtr_\alpha\ne0$ and 
$$Re\left(\frac{D_v \lambda_\alpha}{\lambda_\alpha(\rho)}\right)=\frac{D_v |\lambda_\alpha|}{|\lambda_\alpha(\rho)|}
=D_v\log|\lambda_\alpha|,$$
equation (\ref{lambda degeneracy}) and Lemma \ref{bridgeman calc} imply that
$$k=\frac{D_v\log|\lambda_\alpha|}{\log|\lambda_\alpha(\rho)|}=0.$$
Therefore, $D_v\ell_\gamma=0$ for all $\gamma\in\Gamma$.

Notice that since $tr_\gamma(\rho)^2$ is real for all $\gamma\in\Gamma$, $\rho(\Gamma)$ lies in a proper
(real) Zariski closed subset of $\mathsf{PSL}(2,\mathbb C)$, so is not Zariski dense.
However, since the Zariski closure of $\rho(\Gamma)$ is a Lie subgroup, it must be
conjugate to a subgroup of either $\mathsf{PSL}(2,\mathbb R)$ or to the index two extension of $\mathsf{PSL}(2,\mathbb R)$
obtained by appending $z\to -z$. Since $\rho$ is quasifuchsian, its limit set $\Lambda(\rho(\Gamma))$ is a Jordan curve
and no element of $\rho(\Gamma)$ can exchange the two components of its complement. Therefore, $\rho$ is Fuchsian.
(We note that this is the only place where our argument differs significantly from Bridgeman's. It replaces his rather technical
\cite[Lemma 15]{bridgeman-wp}.)

We can then write $v=v_1+v_2$ where $v_1\in T_\rho F(S)$ and $v_2$ is a pure bending vector. 
Since $v_2$ is a pure bending vector,
$$0=D_v\ell_\gamma=D_{v_1}\ell_\gamma+D_{v_2}\ell_\gamma=D_{v_1}\ell_\gamma$$ 
for all $\gamma\in\Gamma$. But since $v_1\in T_\rho F(S)$ and there are finitely many curves whose
length functions provide analytic parameters for $F(S)$, this implies
that $v_1=0$. Therefore, $v=v_2$ is a pure bending vector.
\eproof

\section{Patterson-Sullivan measures}

In this section, we observe that the equilibrium state $m_{-h(\rho) \tau_{\rho}}$
is a  normalized pull-back of the Patterson-Sullivan measure on $\Lambda(\rho(\Gamma))$.
We use this to give a more geometric interpretation of the pressure intersection
of two quasifuchsian representations, and hence a geometric formulation of the pressure form.

Sullivan \cite{sullivan-HD,sullivan-GF} generalized Patterson's construction \cite{patterson} for
Fuchsian groups to define a  probability measure $\mu_\rho$
supported on $\Lambda(\rho(\Gamma))$,
called the {\em Patterson-Sullivan measure}. This measure satisfies the
quasi-invariance property:
\begin{equation}
d\mu(\rho(\gamma)(z))=
e^{h(\rho)B_{z}(b_0,\rho(\gamma)^{-1}(b_{0}))}d\mu_{\rho}(z)\label{eqn:pattersonsullivandefn}
\end{equation}
for all $z\in\Lambda(\rho(\Gamma))$ and $\gamma\in\Gamma$.
Sullivan showed that $\mu_{\rho}$ is a scalar multiple of
the $h(\rho)$-dimensional Hausdorff measure on $\partial\mathbb{H}^{3}$
(with respect to the metric obtained from its identification with
$T_{b_{0}}^{1}(\mathbb{H}^{3})$).

Let $\hat\mu_\rho=(\xi_\rho\circ\omega)^*\mu_\rho$ be the pull-back of the Patterson-Sullivan
measure to $\Sigma^+$.
Our normalization will involve the Gromov product with respect to $b_0$, which is defined to be
\begin{equation}
\langle z,w\rangle=\frac{1}{2}\big(B_z(b_{0},p)+B_w(b_{0},p)\big)\label{eqn:gromovproduct}
\end{equation}
for any pair $z$ and $w$ of distinct points in $\partial\mathbb{H}^{3}$, where $p$ is some (any)
point on the geodesic joining $z$ to $w$. 
One may check that for all $\alpha\in\rho(\Gamma)$ and $z,w\in\Lambda(\rho(\Gamma))$ we have
$$\langle\alpha(z),\alpha(w)\rangle =\langle z,w\rangle
-\frac{1}{2} \Big( B_{z}(b_0,\alpha^{-1}(b_0))+B_{w}(b_0,\alpha^{-1}(b_{0}))\Big).$$

If $x\in\Sigma^+$, let 
$$\Lambda(\rho(\Gamma))_x=\{\xi_\rho(\omega(y^-))\ | y\in\Sigma,\ y^+=x\},$$
where $\Sigma$ is the two-sided Markov shift associated to $\Sigma^+$ and $y^-=(y_{1-i}^{-1})_{i\in\mathbb N}$.
Notice that each $\Lambda(\rho(\Gamma))_x$ is open in $\Lambda(\rho(\Gamma))$. Furthermore, there are only
finitely many different sets which arise as $\Lambda(\rho(\Gamma))_x$ for some $x\in\Sigma^+$, since $\Lambda(\rho(\Gamma))_x$ depends only on $x_1$ and
if $r(x_1)\ge 3$ and $x_1=(b_0,w^s,w_1,\ldots,w_{k-1})$ then $\Lambda(\rho(\Gamma))_x$ depends only on $b_0$ and $w$.
Let $H_\rho:\Sigma^+\to (0,\infty)$
be defined by
$$H_{\rho}(x)=\int_{\Lambda(\rho(\Gamma))_x} e^{2h(\rho)\langle \xi_\rho(\omega(x)),z\rangle}\ d\mu_{\rho}(z).$$
Notice that $\Lambda(\rho(\Gamma))_x$ is disjoint from $\xi_\rho(I_x)$ where
$I_x$ is the component of $\partial\mathbb H^2-\partial D_0$ containing $\omega(x)$, so 
$e^{2h(\rho)\langle \xi_\rho(\omega(x)),z\rangle_{b_{0}}}$ is bounded on $\Lambda(\rho(\Gamma))_x$. In particular, $H_\rho(x)$ is
finite for all $x$.
Since $\omega$ is locally H\"older continuous and $\xi_\rho$ is H\"older, $H_\rho$ is locally H\"older continuous.

We now show that $H_\rho$ is the normalization of the pull-back $\hat\mu_\rho$ of Patterson-Sullivan measure which
gives the equilibrium measure for $-h(\rho)\tau_\rho$. Dal'bo and Peign\'e \cite[Prop. V.3]{dalbo-peigne} obtain an analogous
result for negatively curved manifolds whose fundamental groups ``act like'' geometrically finite Fuchsian groups of co-infinite area
(see also Dal'bo-Peign\'e \cite[Cor. II.5]{dalbo-peigne-ping-pong}).

\begin{proposition} \label{patterson-sullivan}
If $S$ is a compact surface with non-empty boundary and $\rho\in QF(S)$, then
the equilibrium state of  $-h(\rho)\tau_{\rho}$ on $\Sigma^+$ is a scalar multiple of
$H_\rho\ \hat\mu_\rho$.
\end{proposition}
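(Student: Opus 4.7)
My plan is to identify $H_\rho$ as an eigenfunction and $\hat\mu_\rho$ as an eigenmeasure for the transfer operator $\mathcal{L} := \mathcal{L}_{-h(\rho)\tau_\rho}$ and its dual, both with eigenvalue $e^{P(-h(\rho)\tau_\rho)} = 1$. Once both are established, the Perron--Frobenius mechanism -- legitimate in this countable Markov setting by Theorems \ref{uniqgibbs} and \ref{uniqeq} together with Lemma \ref{roof integrable} -- identifies the unique shift-invariant Gibbs (equivalently, equilibrium) state as the normalization of $H_\rho\, \hat\mu_\rho$, which is the conclusion.

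The eigenmeasure statement $\mathcal{L}^* \hat\mu_\rho = \hat\mu_\rho$ reduces, on each cylinder $[a]$, to the local Jacobian
$$\sigma_*\bigl(\hat\mu_\rho|_{[a]}\bigr) = e^{-h(\rho)\tau_\rho(a\,\cdot)}\, \hat\mu_\rho|_{\sigma([a])}.$$
I verify this by combining the equivariance $\xi_\rho(\omega(ax)) = \rho(G(a))\xi_\rho(\omega(x))$ from Lemma \ref{limit map} with the isometry-invariance of the Busemann function to rewrite $\tau_\rho(ax) = B_{\xi_\rho(\omega(x))}(\rho(G(a))^{-1}b_0, b_0)$, and then applying the quasi-invariance \eqref{eqn:pattersonsullivandefn} of the Patterson--Sullivan measure via the change of variable $z = \rho(G(a))(w)$. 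Summing the resulting identity over admissible $a$ gives $\mathcal{L}^*\hat\mu_\rho = \hat\mu_\rho$.

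The eigenfunction statement $\mathcal{L} H_\rho = H_\rho$ amounts to
$$\sum_{a\,:\,t_{a,x_1}=1} e^{-h(\rho)\tau_\rho(ax)}\, H_\rho(ax) = H_\rho(x).$$
I carry out the same substitution $z = \rho(G(a))(w)$ inside the integral defining $H_\rho(ax)$, now combined with the Gromov-product cocycle
$$\langle \rho(G(a)) z, \rho(G(a)) w\rangle = \langle z,w\rangle - \tfrac{1}{2}\bigl(B_z(b_0, \rho(G(a))^{-1} b_0) + B_w(b_0, \rho(G(a))^{-1} b_0)\bigr).$$
The $B_w$ contribution coming from this cocycle cancels exactly against the $B_w$ factor produced by a second application of quasi-invariance to $d\mu_\rho(\rho(G(a))w)$, while the surviving $B_{\xi_\rho(\omega(x))}$ term, identified with $-\tau_\rho(ax)$ as in the previous paragraph, supplies a prefactor $e^{h(\rho)\tau_\rho(ax)}$. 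This yields
$$H_\rho(ax) = e^{h(\rho)\tau_\rho(ax)}\int_{\rho(G(a))^{-1}\Lambda(\rho(\Gamma))_{ax}} e^{2h(\rho)\langle \xi_\rho(\omega(x)), w\rangle}\, d\mu_\rho(w),$$
so that summing $e^{-h(\rho)\tau_\rho(ax)} H_\rho(ax)$ over admissible $a$ and invoking the partition $\Lambda(\rho(\Gamma))_x = \bigsqcup_a \rho(G(a))^{-1}\Lambda(\rho(\Gamma))_{ax}$ recovers $H_\rho(x)$.

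Verifying this partition (up to $\mu_\rho$-nullsets) is the step I expect to require the most care. It reflects the two-sided coding: a two-sided sequence $y' \in \Sigma$ with $(y')^+ = x$ decomposes uniquely according to its last past letter $y'_0 = a$, a valid predecessor of $x_1$ in the transition matrix; the inverse shift $y$ of $y'$ under $\sigma$ then satisfies $y^+ = ax$. The $\rho$-equivariance of $\xi_\rho$, combined with the identity $\omega(y^-) = G(a)\,\omega((\sigma y)^-)$ that follows from the defining recursion $\omega(x) = G(x_1)\omega(\sigma x)$ applied to past-words, shows that the past limit point of $y'$ equals $\rho(G(a))$ applied to the past limit point of $y$, which is the asserted partition. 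Finiteness of the normalizing integral $\int H_\rho\, d\hat\mu_\rho$ needed to pass to a probability measure follows from the local H\"older continuity of $H_\rho$ established in the text and the fact that each $\Lambda(\rho(\Gamma))_x$ is bounded away from $\xi_\rho(\omega(x))$, so $H_\rho$ is locally bounded on cylinders.
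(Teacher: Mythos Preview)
Your overall strategy---identify $\hat\mu_\rho$ as an eigenmeasure and $H_\rho$ as an eigenfunction for $\mathcal{L}_{-h(\rho)\tau_\rho}$, both with eigenvalue $1$, then invoke Ruelle--Perron--Frobenius---is a legitimate alternative to the paper's route. The paper instead verifies shift-invariance of $H_\rho\,\hat\mu_\rho$ by a direct computation based on the \emph{forward} identity $\rho(G(x_1))^{-1}\Lambda(\rho(\Gamma))_x = \Lambda(\rho(\Gamma))_{\sigma(x)}$, a single change of variables rather than the countable backward partition you propose; it then shows $\hat\mu_\rho$ is an eigenmeasure (as you do) and finally proves $H_\rho$ is globally bounded so that $H_\rho\,\hat\mu_\rho$ inherits the Gibbs property and is, by uniqueness, the equilibrium state. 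The forward identity sidesteps the verification of disjointness and exhaustion in your partition, and avoids relying on the involution $a\mapsto a^{-1}$ on $\mathcal A$ and the compatibility $G(a^{-1})=G(a)^{-1}$ that your recursion for $\omega(y^-)$ implicitly uses.

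The genuine gap is your last paragraph. To pass from ``$H_\rho$ is a positive continuous eigenfunction'' to ``$H_\rho\,\hat\mu_\rho$ is a scalar multiple of the equilibrium state'' you need at minimum $\int_{\Sigma^+} H_\rho\, d\hat\mu_\rho < \infty$; the paper obtains this via the stronger fact that $H_\rho$ is uniformly bounded above. Your justification delivers neither. Local H\"older continuity bounds only the oscillation of $H_\rho$ within each $1$-cylinder, not $\sup_{a\in\mathcal A}\sup_{[a]} H_\rho$ across the infinitely many letters, and ``$\Lambda(\rho(\Gamma))_x$ is bounded away from $\xi_\rho(\omega(x))$'' gives pointwise finiteness of $H_\rho(x)$ but no uniform bound: as $\omega(x)$ approaches a parabolic vertex $p$ of $D_0$, points of $\Lambda(\rho(\Gamma))_x$ lying in the interval adjacent to $I_x$ could a priori approach $\xi_\rho(\omega(x))$ and send the Gromov product to infinity. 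The paper's argument here is genuinely geometric: when $\omega(x)\in U_p$ the coding forces $x_1\in\mathcal A_2$, and the constraint $b\ne w_{2N}$ on the initial letter of $x_1$ then forces $\Lambda(\rho(\Gamma))_x$ to miss not only $\xi_\rho(I_x)$ but also the adjacent $\xi_\rho(I)$, yielding the uniform bound. You need this argument, or an equivalent integrability estimate exploiting the decay $\hat\mu_\rho([a])\asymp r(a)^{-2h(\rho)}$, to finish.
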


\begin{proof}
Let $\alpha(\rho,x)=\rho(G(x_1))^{-1}$
and notice that 
$$\alpha(\rho,x)(\xi_\rho(\omega(x)))=\xi_\rho(\omega(\sigma(x)))\qquad\mathrm{and}\qquad
\alpha(\rho,x)(\Lambda(\rho(\Gamma))_x)=\Lambda(\rho(\Gamma))_{\sigma(x)}.$$
The quasi-invariance of Patterson-Sullivan measure implies that
\[
\frac{d\hat{\mu}(\sigma(y))}{d\hat{\mu}(y)}=\frac{d\mu_{\rho}\left(\alpha(\rho,x)(\xi_{\rho}(\omega(y))\right)}
{d\mu_{\rho}(\xi_{\rho}(\omega(y))}=e^{h(\rho)B_{(\xi_{\rho}(\omega)(y))}(b_{0},\alpha(\rho,x)^{-1}(b_{0}))}.
\]

We first check that $H_\rho\ \hat\mu_\rho$ is shift invariant.
{\footnotesize
\begin{eqnarray*}
H_\rho(\sigma(x)) d\hat\mu_\rho(\sigma(x)) &= &
\left(\int_{\Lambda(\rho(\Gamma))_{\sigma(x)}} e^{2h(\rho)\langle\xi_\rho(\omega(\sigma(x))),w\rangle}d\mu_\rho(w)\right) d\mu_\rho(\xi_\rho(\omega(\sigma(x)))\\
& = & 
\left(\int_{\Lambda(\rho(\Gamma))_{\sigma(x)}} e^{2h(\rho)\langle\alpha(\rho,x)(\xi_\rho(\omega(x)),\alpha(\rho,x)(v)\rangle}d\mu_\rho(\alpha(\rho,x)(v))\right) d\mu_\rho(\alpha(\rho,x)(\xi_\rho(\omega(x)))\\
& = & 
\Bigg(\int_{\Lambda(\rho(\Gamma))_x} e^{2h(\rho)\langle\xi_\rho(\omega(x)),v\rangle}e^{-h(\rho) \left( B_{\xi_\rho(\omega(x))}(b_0,\alpha(\rho,x)^{-1}(b_0)+B_{v}(b_{0},\alpha(\rho,x)^{-1}(b_0)))\right)} \\
& & \qquad e^{h(\rho)B_v(b_0,\alpha(\rho,x)^{-1}(b_0))}d\mu_\rho(v)\Bigg)
 e^{h(\rho)B_{\xi_\rho(\omega(x))}(b_0,\alpha(\rho,x)^{-1}(b_0))}d\mu_\rho(\xi_\rho(\omega(x)))\\
& = & 
\left(\int_{\Lambda(\rho(\Gamma))_x} e^{2h(\rho)<\langle\xi_\rho(\omega(x)),v\rangle} d\mu_\rho(v) \right) d\mu_\rho(\xi_\rho(\omega(x)))\\
& = & 
H_\rho(x)d\hat\mu_\rho(x)
\end{eqnarray*} }
So $H_\rho\ \hat\mu_\rho$ is shift invariant.

Now we check that $\hat\mu_\rho$ is a (scalar multiple of a) Gibbs state for $-h(\rho)\tau_\rho$. We recall, from
\cite[Theorem 2.3.3]{MU},  that it suffices to check that 
$\hat\mu_\rho$ is an eigenmeasure for  the dual of the transfer operator $\mathcal L_{-h(\rho)\tau_\rho}$.
If $g:\Sigma^{+}\to\mathbb{R}$ is bounded and continuous, then 
\begin{align*}
\int_{\Sigma^{+}}\mathcal{L}_{-h(\rho)\tau_{\rho}}(g)(x)\  d\hat{\mu}_\rho(x) & =
\int_{\Sigma^{+}}\left(\sum_{y\in\sigma^{-1}(x)}e^{-h(\rho)\tau_{\rho}(y)}g(y)\right) d\hat{\mu}_\rho(x)\\
 & =\int_{\Sigma^{+}}\left(e^{-h(\rho)\tau_{\rho}(y)}g(y)\right)\ d\hat{\mu}_\rho(\sigma(y))\\
& =\int_{\Sigma^{+}} g(y)\  d\hat{\mu}_\rho(y)\\
\end{align*}
Therefore, $\hat\mu_\rho$ is a (scalar multiple of a) Gibbs state for $-h(\rho)\tau_\rho$.

Finally, we observe that $H_\rho$ is bounded above.
If $p$  is a vertex of $D_0$, then, by construction, there exists a neighborhood $U_p$ of  $p$, so that if $\omega(x)\in U_p$,
then there exists $w\in\mathcal C^*$, so that \hbox{$x_1=(b,\omega^s,w_1,\ldots,w_{k-1},c)$} for some $s\ge 2$. 
Recall that we require that $b\ne w_{2N}$ and $c\ne  w_k$. Observe that $w_1$ is the face pairing of the edge of $D_0$ associated to $I_x$ 
and that $w_{2N}$ is the inverse of the face-pairing associated to the other edge $E$ of $\partial D_0$ which ends at $p$. 
So, if $I$ is the interval in $\partial\mathbb H^2-\partial D_0$ bounded by $E$, then $\Lambda(\rho(\Gamma))_x$  is disjoint from
$\xi_\rho(I_x\cup I)$. Therefore, $H_\rho$ is uniformly bounded on $\omega^{-1}(U_p)$ (since
$e^{2h(\rho)\langle \xi_\rho(\omega(x)),z\rangle_{b_{0}}}$ is uniformly bounded for all $z\in\Lambda(\rho(\Gamma))_x
\subset \Lambda(\rho(\Gamma))-\xi_\rho(I\cup I_x)$).
However, $D_0$ has finitely many vertices $\{p_1,\ldots,p_n\}$ and $H_\rho$ is clearly bounded above if
$\omega(x)\in \partial\mathbb H^2-\bigcup U_{p_i}$ (since
again $e^{2h(\rho)\langle \xi_\rho(\omega(x)),z\rangle_{b_{0}}}$ is uniformly bounded for all  $z\in\Lambda(\rho(\Gamma))_x
\subset \Lambda(\rho(\Gamma))-I_x$).
Therefore, $H_\rho$ is bounded above on $\Sigma^+$.

Since every multiple of a Gibbs state for $-h(\rho)\tau_\rho$ by a continuous function which is bounded between
positive constants is also a (scalar multiple of a) Gibbs state 
for $-h(\rho)\tau_\rho$ (see \cite[Remark 2.2.1]{MU}), we see
that $H_\rho\  \hat\mu_\rho$ is a shift invariant Gibbs state and hence an equilibrium measure 
for $-h(\rho)\tau_\rho$ (see Theorem \ref{uniqeq}).
\end{proof}

If $\rho\in QC(\Gamma)$, let $N_{\rho}=\mathbb{H}^{3}/\rho(\Gamma)$
be the quasifuchsian 3-manifold and let $T^{1}(N_\rho)^{nw}$ denote the
non-wandering portion of its geodesic flow. The Hopf
parameterization provides a homeomorphism
\[
\mathcal{H}:T^{1}(N_{\rho})^{nw}\to\Omega=\Big(\big(\Lambda(\rho(\Gamma))\times\Lambda(\rho(\Gamma))-\Delta\big)\times\mathbb{R}\Big)/\Gamma
\]
Let 
$$\Sigma^{\hat\tau_{\rho}}=\{(x,t):\ x\in\Sigma,\ 0\le t\leq\hat\tau_{\rho}(x^+)\}/\sim$$
(where $(x,\tau_{\rho}(x^+))\sim(\sigma(x),0)$) be the suspension flow
over $\Sigma$ with roof function $\hat\tau_{\rho}$. Recall that $\hat\tau_\rho:\Sigma^+\to (0,\infty)$ is a positive
function cohomologous to $\tau_\rho$.

The Stadlbauer-Ledrappier-Sarig coding map $\omega$ for $\Sigma^+$ extends to
a continous injective  coding map
$$\hat\omega: \Sigma\to\Lambda(\Gamma)\times\Lambda(\Gamma)$$
given by $\hat\omega(x)=(\omega(x^+),\omega(x^-))$ where
$x^+=(x_i)_{i\in\mathbb N}$ and $x^-=(x_{1-i}^{-1})_{i\in\mathbb N}$.
One then has a continuous injective map
$$\kappa:\Sigma^{\hat\tau_\rho}\to\Omega$$
which is the quotient  of the map 
$\tilde\kappa:\Sigma\times\mathbb R\to \big(\Lambda(\rho(\Gamma))\times\Lambda(\rho(\Gamma))-\Delta\big)\times\mathbb R$ 
given by 
$$\tilde\kappa(x,t)=\big( (\xi_\rho\times\xi_\rho)\hat\omega(x),t\big).$$ 
(The image of $\kappa$ is the complement of all flow lines which do not exit cusps of $N_\rho$ and has full
measure in $\Omega$.)
The map $\kappa$ conjugates the suspension flow to the geodesic flow on its image
i.e. $\kappa\circ\phi_t=\phi_t\circ \kappa$ for all $t\in\mathbb R$ on $\kappa(\Sigma^{\hat\tau_\rho})$.

The Bowen-Margulis-Sullivan measure $m_{BM}^{\rho}$ on $\Omega$ 
can be described by its lift to $\widetilde\Omega$
which is given by
\[
\widetilde{m_{BM}^{\rho}}(z,w,t)=e^{2h(\rho)\langle z,w \rangle_{b_{0}}}d\mu_{\rho}(z)d\mu_{\rho}(w)dt.
\]
The Bowen-Margulis-Sullivan measure $m_{BM}^{\rho}$ is finite and ergodic 
(see Sullivan \cite[Theorem 3]{sullivan-GF}) and equidistributed on closed
geodesics (see Roblin \cite[Th\'eor\`eme 5.1.1]{roblin} or Paulin-Pollicott-Schapira \cite[Theorem 9.11]{PPS}.)

\begin{corollary}\label{Cor:BM-EQnew} Suppose that $F:(\Sigma^+)^{\hat\tau_{\rho}}\to\mathbb{R}$
is a bounded continuous function and $\widehat F:\Sigma^{\hat\tau_\rho}\to\mathbb R$ is
given by $\widehat F(x,t)=F(x^+,t)$. Then
$$\frac{\int_{\Omega} \widehat F \circ\kappa^{-1}\ dm_{BM}^{\rho}}
{\int_{\Omega}  \ dm_{BM}^{\rho}}=\frac{\int_{\Sigma^{+}}\left(\int_{0}^{\hat\tau_\rho(x^+)}F(x,t)\ dt\right)dm_{-h(\rho)\tau_\rho}}
{\int_{\Sigma^{+}}\tau_\rho(x^+)\ dm_{-h(\rho)\hat\tau_\rho}}.$$
\end{corollary}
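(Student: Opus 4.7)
The plan is to compute $\kappa^*(m_{BM}^\rho)$ explicitly on the fundamental domain $\{(x,t) \in \Sigma \times \mathbb{R} : 0 \leq t < \hat\tau_\rho(x^+)\}$ of the suspension, and then exploit the structure of $\widehat F$ (depending only on $x^+$ and $t$) to reduce the integrals on $\Omega$ to integrals on $\Sigma^+$.

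First I would lift the Bowen--Margulis--Sullivan measure to $\widetilde\Omega$ and pull back by $\tilde\kappa$. Since $\tilde\kappa(x,t) = ((\xi_\rho \times \xi_\rho)\hat\omega(x), t)$ and the lifted BMS measure is $e^{2h(\rho)\langle z,w\rangle} d\mu_\rho(z)\, d\mu_\rho(w)\, dt$, the pull-back on a fundamental domain for the $\Gamma$-action is formally
\[
\kappa^{*}(m_{BM}^\rho) = e^{2h(\rho)\langle \xi_\rho\omega(x^+), \xi_\rho\omega(x^-)\rangle} \, d\hat\mu_\rho^{+}(x^+)\, d\hat\mu_\rho^{-}(x^-)\, dt,
\]
where $\hat\mu_\rho^\pm$ are the appropriate pullbacks of $\mu_\rho$ to the one-sided shifts. (I would need to check this carefully using the $\Gamma$-quasi-invariance of $\mu_\rho$ and the cocycle property of the Gromov product, observing that the factor $e^{2h(\rho)\langle z,w\rangle}$ exactly compensates the Jacobian under changes of base point.)

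Next, for $\widehat F(x,t) = F(x^+, t)$, apply Fubini to integrate out the past $x^-$. By the definition of $\Lambda(\rho(\Gamma))_{x^+}$ as the image of pasts compatible with $x^+$, the integral over $x^-$ (for fixed $x^+ = x$) gives
\[
\int_{\Lambda(\rho(\Gamma))_x} e^{2h(\rho)\langle \xi_\rho\omega(x), z\rangle}\, d\mu_\rho(z) = H_\rho(x),
\]
by the definition of $H_\rho$ from the paragraph preceding Proposition~\ref{patterson-sullivan}. Thus
\[
\int_\Omega \widehat F \circ \kappa^{-1}\, dm_{BM}^\rho = \int_{\Sigma^+} \int_0^{\hat\tau_\rho(x)} F(x,t)\, dt\; d(H_\rho\, \hat\mu_\rho)(x).
\]

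Then I would invoke Proposition~\ref{patterson-sullivan}, which says $H_\rho\, \hat\mu_\rho = C\, m_{-h(\rho)\tau_\rho}$ for some constant $C > 0$. Substituting and specializing to $F \equiv 1$ to compute the denominator, and using that $\tau_\rho$ and $\hat\tau_\rho$ are cohomologous (so $\int \hat\tau_\rho\, dm_{-h(\rho)\tau_\rho} = \int \tau_\rho\, dm_{-h(\rho)\tau_\rho}$ and $m_{-h(\rho)\tau_\rho} = m_{-h(\rho)\hat\tau_\rho}$), the unknown constant $C$ cancels in the ratio and we obtain the asserted identity. The main obstacle will be the measure-theoretic bookkeeping in step one: verifying that the formal expression for $\kappa^{*}(m_{BM}^\rho)$ is correct modulo $\Gamma$ (equivalently, that the fundamental-domain choice $0 \leq t < \hat\tau_\rho(x^+)$ corresponds exactly to one $\Gamma$-orbit of each generic point of $\Omega$), and justifying the use of Fubini with the infinite-alphabet shift, which uses the local boundedness of $H_\rho$ established in the proof of Proposition~\ref{patterson-sullivan}.
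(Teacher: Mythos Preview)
Your proposal is correct and follows essentially the same route as the paper: pull back $m_{BM}^\rho$ to the fundamental domain of the suspension, integrate out the past coordinate to produce the factor $H_\rho$, invoke Proposition~\ref{patterson-sullivan} to identify $H_\rho\,\hat\mu_\rho$ with (a scalar multiple of) $m_{-h(\rho)\tau_\rho}$, and then take the ratio with the $F\equiv 1$ case so the scalar cancels. The paper works directly on $\Lambda(\rho(\Gamma))\times\Lambda(\rho(\Gamma))$ rather than pulling back to $\Sigma^+\times\Sigma^-$, but this is only a notational difference; your explicit mention of the cohomology of $\tau_\rho$ and $\hat\tau_\rho$ to reconcile the denominator is a detail the paper uses implicitly.
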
 

\begin{proof}
Let
$$\widehat R=\{ (\hat\omega(x),t)\in \Lambda(\rho(\Gamma))\times\Lambda(\rho(\Gamma))\times\mathbb R\ |\
x\in\Sigma,\ t\in [0,\hat\tau_\rho(x^+)]\}$$
be a fundamental domain for the action of $\Gamma$ on
$\big(\Lambda(\rho(\Gamma))\times\Lambda(\rho(\Gamma))-\Delta\big)\times\mathbb R$ and let
$$ R=\{ (\omega(x^+),t)\in \Lambda(\rho(\Gamma))\times\mathbb R\ |\ x^+\in\Sigma^+,\
t\in [0,\hat\tau_\rho(x^+)]\}.$$

By Proposition \ref{patterson-sullivan}, we have
\begin{align*}
\int_{\Omega} \widehat{F}\circ\kappa^{-1} \ dm_{BM}^{\rho}
& =\int_{\widehat R}\widehat{F}\circ\kappa^{-1}e^{h(\rho)2\langle z,w \rangle_{b_{0}}}d\mu_{\rho}(z)d\mu_{\rho}(w)dt\\
 & =\int_{R} F(\omega^{-1}(z),t)\left(\int_{\Lambda(\rho(\Gamma))}e^{h(\rho)2\langle z,w\rangle_{b_{0}}}
 d\mu_\rho(w)\right)
 d\mu_{\rho}(z)dt\\
 & =\int_{R} F(\omega^{-1}(z),t))H_{\rho}(z)d\mu_{\rho}(z)dt\\
 & =\int_{\Lambda(\rho(\Gamma))}\left(\int_{0}^{\hat\tau_\rho(\omega^{-1}(z))}F(\omega^{-1}(z),t))dt \right)H_{\rho}(z)d\mu_{\rho}(z)\\
 & =\int_{\Sigma^{+}}\left(\int_{0}^{\hat\tau_{\rho}(x_{+})}F(x^{+},t)dt\right)dm_{-h(\rho)\tau_{\rho}}(x_{+})\\
\end{align*}
In particular, if we consider $F\equiv 1$, then we see that
$$||dm_{BM}^\rho||=\int_{\Omega} \ dm_{BM}^{\rho}=
\int_{\Sigma^{+}}\left(\int_{0}^{\hat\tau_{\rho}(x_{+})}dt\right)dm_{-h(\rho)\tau_{\rho}}(x_{+})
=\int_{\Sigma^{+}}\tau_\rho(x^+)\ dm_{-h(\rho)\tau_\rho}$$
so our result follows.
\end{proof}

Let 
$$\mu_T(\rho)=\frac{1}{|R_T(\rho)|}\sum_{[\gamma]\in R_T(\rho)} \frac{\delta_{[\gamma]}}{\ell_\rho(\gamma)}$$
where $\delta_{[\gamma]}$ is the Dirac measure on the closed orbit associated to $[\gamma]$ and
$$R_T(\rho)=\{[\gamma]\in[\pi_1(S)]\ |\ 0<\ell_{\rho}(\gamma)\le T\}.$$
(If $\gamma=\beta^n$ for $n>1$ and $\beta$ is indivisible, then 
$\frac{\delta_{[\gamma]}}{\ell_\rho(\gamma)}=\frac{n\delta_{[\beta]}}{\ell_\rho(\beta^n)}=\frac{\delta_{[\beta]}}{\ell_\rho(\beta)}$.)
Since the  Bowen-Margulis measure $m_{BM}^{\rho}$ is equidistributed on closed
geodesics, $\{\mu_T(\rho)\}$  converges to 
$\frac{m_{BM}^{\rho}}{||m_{BM}^{\rho}||}$
weakly (in the dual to the space of bounded continuous functions) as $T\to\infty$.

We finally obtain the promised geometric form for the pressure intersection.
We may thus think of the pressure intersection, in the spirit of Thurston, as the Hessian of the length of a random geodesic.

\begin{theorem}
\label{geometricintersection}
Suppose that $S$ is a compact surface with non-empty boundary, $X=\mathbb H^2/\Gamma$ is a finite area
surface homeomorphic to the interior of $S$ and $\rho\in QF(S)$.
If $\{\gamma_{n}\}\subset\Gamma$ and $\left\{\frac{\delta_{\rho(\gamma_{n})}}{\ell_{\rho}(\gamma_n)}\right\}$
converges weakly to $\frac{m_{BM}^{\rho}}{||m_{BM}^\rho||}$, then
\[
{\rm I}(\rho,\eta)=\lim_{n\to\infty}\frac{\ell_{\eta}(\gamma_{n})}{\ell_{\rho}(\gamma_{n})}.
\]
Moreover,
\[
{\rm I}(\rho,\eta)=\lim_{T\to\infty}\frac{1}{|R_{T}(\rho)|}\sum_{[\gamma]\in R_{T}(\rho)}\frac{\ell_{\eta}(\gamma)}{\ell_{\rho}(\gamma)}.
\]
\end{theorem}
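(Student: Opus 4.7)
The plan is to realize $I(\rho,\eta)$ geometrically as the normalized integral of a bounded, locally H\"older ``geodesic stretch'' function $\Phi_\eta$ on $\Omega$ whose integral along each closed $\rho$-orbit equals the $\eta$-translation length of the corresponding conjugacy class. Both conclusions then follow by testing against $\Phi_\eta$: the first from the given weak convergence $\frac{\delta_{\rho(\gamma_n)}}{\ell_\rho(\gamma_n)} \to \frac{m_{BM}^\rho}{\|m_{BM}^\rho\|}$, and the second from Roblin's equidistribution theorem \cite{roblin} (see also \cite[Theorem~9.11]{PPS}), which gives $\mu_T(\rho) \to \frac{m_{BM}^\rho}{\|m_{BM}^\rho\|}$ weakly.

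To build $\Phi_\eta$, I would use the bi-H\"older map $\xi_\eta \circ \xi_\rho^{-1}$ from Lemma \ref{limit map} to induce, via the Hopf parameterization, a bi-H\"older orbit equivalence $\Theta : \Omega \to T^1(N_\eta)^{nw}$. The reparametrization cocycle $\alpha_\eta(v,t)$ giving the $\eta$-time for $\Theta \circ \phi^\rho_t(v)$ to reach the $\eta$-orbit of $\Theta(v)$ is locally H\"older, and $\Phi_\eta(v) := \partial_t\big|_{t=0}\alpha_\eta(v,t)$ is a positive, locally H\"older function on $\Omega$ satisfying $\int_{c_{\rho,\gamma}}\Phi_\eta\, dt = \ell_\eta(\gamma)$ on every closed $\rho$-orbit $c_{\rho,\gamma}$. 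Boundedness of $\Phi_\eta$ follows because the parabolic elements of $\rho(\Gamma)$ and $\eta(\Gamma)$ are each conjugate to $z\mapsto z+1$ in ${\rm Isom}(\mathbb H^3)$, so the cusp regions of $N_\rho$ and $N_\eta$ are canonically isometric and $\Theta$ is an isometry there, forcing $\Phi_\eta\equiv 1$ in the cusps.

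Next I would establish
\[
\int_\Omega \Phi_\eta \, dm_{BM}^\rho \;=\; \int_{\Sigma^+} \tau_\eta \, dm_{-h(\rho)\tau_\rho}
\]
by combining Corollary \ref{Cor:BM-EQnew} with Livsic's theorem. Pulling $\Phi_\eta$ back through $\kappa$ and integrating over fibers gives a locally H\"older function $f(x) := \int_0^{\hat\tau_\rho(x^+)}\Phi_\eta(\kappa(x,t))\, dt$ on $\Sigma$ whose periods at $x\in\mathrm{Fix}^n$ satisfy $S_n f(x) = \ell_\eta(\gamma) = S_n\hat\tau_\eta(x^+)$. Livsic's theorem on $\Sigma$, together with the standard correspondence between shift-invariant equilibrium states on $\Sigma$ and $\Sigma^+$, shows that $f$ is cohomologous to the lift of $\hat\tau_\eta$, so their integrals against $m_{-h(\rho)\tau_\rho}$ coincide. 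Combined with Corollary \ref{positive roof} (identifying the integrals of $\hat\tau_\eta$ and $\tau_\eta$) and Corollary \ref{Cor:BM-EQnew} with $F\equiv 1$, which yields $\|m_{BM}^\rho\| = \int_{\Sigma^+}\tau_\rho\, dm_{-h(\rho)\tau_\rho}$, this produces $\int_\Omega \Phi_\eta\, dm_{BM}^\rho / \|m_{BM}^\rho\| = I(\rho,\eta)$.

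The conclusions now follow immediately: integrating $\Phi_\eta$ against $\frac{\delta_{\rho(\gamma_n)}}{\ell_\rho(\gamma_n)}$ produces $\frac{\ell_\eta(\gamma_n)}{\ell_\rho(\gamma_n)}$, which tends to $I(\rho,\eta)$ by the hypothesized weak convergence, while $\int_\Omega \Phi_\eta\, d\mu_T(\rho) = \frac{1}{|R_T(\rho)|}\sum_{[\gamma]\in R_T(\rho)} \ell_\eta(\gamma)/\ell_\rho(\gamma)$ tends to the same limit by Roblin's equidistribution. The main technical obstacle is the middle step: bridging the symbolic and geometric sides requires handling the fact that $\Phi_\eta\circ\kappa$ depends on the full bilateral coding while Corollary \ref{Cor:BM-EQnew} is phrased for functions of the future coordinate alone (Livsic resolves this since only periods and invariant-measure averages matter), and requires care near the cusps to guarantee the boundedness of $\Phi_\eta$ needed for the weak convergence arguments.
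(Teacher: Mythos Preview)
Your strategy is sound and yields a correct proof, but it takes a genuinely different route from the paper. The paper never constructs a geometric stretch function on $\Omega$; instead it works entirely on the symbolic side. It defines a test function directly on the suspension flow $\Sigma^{\hat\tau_\rho}$ by
\[
\psi(x,t)=\frac{\hat\tau_\eta(x)}{\hat\tau_\rho(x)}\,f\!\left(\frac{t}{\hat\tau_\rho(x)}\right),
\]
with $f$ a smooth bump on $[0,1]$ of total mass $1$. This $\psi$ depends only on $x^+$, so Corollary~\ref{Cor:BM-EQnew} applies immediately (no bilateral/Livsic issue), its boundedness is automatic from Proposition~\ref{roof analytic} (the roof functions are both comparable to $2\log r(x_1)$), and its integral along the closed orbit for $\gamma$ is exactly $\ell_\eta(\gamma)/\ell_\rho(\gamma)$ by construction. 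One then tests $\widehat\psi\circ\kappa^{-1}$ against $\mu(\Gamma_n)$ and applies equidistribution. This bypasses both the construction of the orbit equivalence $\Theta$ and the appeal to Livsic that your approach needs.

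One point in your sketch is not right as stated: the claim that ``$\Theta$ is an isometry in the cusps, forcing $\Phi_\eta\equiv 1$ there.'' The map $\Theta$ is an orbit equivalence induced by the boundary map $\xi_\eta\circ\xi_\rho^{-1}$, and there is no reason it restricts to an isometry on cusp neighborhoods, nor that the infinitesimal reparametrization is identically $1$ there. Boundedness of $\Phi_\eta$ can be rescued, but by a different mechanism: the $K$-bilipschitz equivalence $N_\rho\simeq N_\eta$ (Douady--Earle, as used in Proposition~\ref{roof analytic}) gives a two-sided bound on the stretch. Your overall architecture---build a bounded continuous function whose orbit integrals are $\ell_\eta(\gamma)$, identify its $m_{BM}^\rho$-average with $I(\rho,\eta)$, then invoke weak convergence/equidistribution---is correct; the paper simply realizes this architecture more cheaply by staying on the coding.
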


\begin{proof}
Let $\{\Gamma_n\}$ be a sequence of finite collections of  elements of $[\Gamma]$ so that 
$\left\{\mu(\Gamma_n)=\frac{1}{|\Gamma_n|}\sum_{[\gamma]\in\Gamma_n} \frac{\delta_{[\gamma]}}{\ell_\rho(\gamma)}\right\}$ converges weakly to $\frac{m_{BM}^{\rho}}{||m_{BM}^\rho||}$.
As in \cite[Definition 3.9]{kao-pm}, consider the bounded continuous
function $\psi:\Sigma^{\hat\tau_{\rho}}\to\mathbb{R}$ given by 
\[
\psi(x,t)\longmapsto\frac{\hat\tau_{\eta}(x)}{\hat\tau_{\rho}(x)}f\left(\frac{t}{\hat\tau_{\rho}(x)}\right)\text{ for\ all}\ \ t\in[0,\hat\tau_{\rho}(x)]
\]
where $f:[0,1]\to\mathbb{R}$ is a smooth function such that $f(0)=f(1)=0,f(t)>0$
for $0<t<1$ and $\int_{0}^{1}f(t)dt=1$. Then, 
$$\int_\Omega \widehat\psi\circ\kappa^{-1}d\mu(\Gamma_n)=\frac{1}{|\Gamma_n|}\sum_{[\gamma]\in\Gamma_n}
\frac{\ell_\eta(\gamma)}{\ell_\rho(\gamma)}$$
where $\widehat \psi(x,t)=\psi(x^+,t)$ for all $x\in\Sigma$.
So, by Corollary \ref{Cor:BM-EQnew}, $\left\{\frac{1}{|\Gamma_n|} \sum_{[\gamma]\in\Gamma_n}\frac{\ell_{\eta}(\gamma_{n})}{\ell_{\rho}(\gamma_{n})}\right\} $
converges to 
\[
\frac{\int_{\Omega}\widehat\psi\circ\kappa^{-1}\ dm_{BM}^{\rho}}{||m_{BM}^{\rho}||}=
\frac{\int_{\Sigma^{+}}\frac{\hat\tau_\eta(x)}{\hat\tau_\rho(x)}\left(\int_{0}^{\hat\tau_\rho(x)}f\left(\frac{t}{\hat\tau_\rho(x)}\right)\ dt\right)dm_{-h(\rho)\tau_\rho}}
{\int_{\Sigma^{+}}\hat\tau_\rho(x)\ dm_{-h(\rho)\tau_\rho}}
=\frac{\int_{\Sigma^{+}}\hat\tau_{\eta}\ dm_{-h(\rho)\tau_{\rho}}}{\int_{\Sigma^{+}}\hat\tau_{\rho}\ dm_{-h(\rho)\tau_{\rho}}}
=\frac{\int_{\Sigma^{+}}\tau_{\eta}\ dm_{-h(\rho)\tau_{\rho}}}{\int_{\Sigma^{+}}\tau_{\rho}\ dm_{-h(\rho)\tau_{\rho}}}
\]
which completes the proof.
\end{proof}

As a consequence, we obtain a geometric presentation of the pressure form which allows us to easily see that
the pressure metric is mapping class group invariant.

\begin{corollary}
\label{geometricpressureform}
If $S$ is a compact surface with non-empty boundary and $\rho_0\in QF(S)$, then
$$\mathbb P|_{T_{\rho_0}QF(S)}=\mathrm{Hess}(J(\rho_0,\rho))=
\mathrm{Hess}\left(\frac{h(\rho)}{h(\rho_0)} \lim_{T\to\infty}\frac{1}{|R_T(\rho_0)|}\sum_{[\gamma]\in R_T(\rho_0)}\frac{\ell_\rho(\gamma)}{\ell_{\rho_0}(\gamma)}\right).$$
Moreover, the pressure metric is mapping class group invariant.
\end{corollary}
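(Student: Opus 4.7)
The plan is to observe that the first equality in the displayed formula is just the definition of the pressure form $\mathbb P$ in terms of $J(\rho_0,\cdot)$, so the content of the first assertion lies entirely in the second equality. That second equality follows by plugging the geometric formula for $I(\rho_0,\rho)$ supplied by Theorem \ref{geometricintersection} into the definition $J(\rho_0,\rho)=\frac{h(\rho)}{h(\rho_0)}I(\rho_0,\rho)$. Proposition \ref{intersectionanalytic} guarantees that $J(\rho_0,\rho)$ is analytic in $\rho$, so the Hessian at $\rho_0$ is well-defined and one may compute it using either the thermodynamic or the geometric expression for $J$.

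The substantive new content is the mapping class group invariance. First I would recall that $\mathrm{Mod}(S)$ acts on $QF(S)$ analytically: each $\phi\in\mathrm{Mod}(S)$ lifts to an automorphism $\phi_*$ of $\Gamma$ (well-defined up to inner automorphism), and $\phi$ sends the class $[\rho]\in QF(S)$ to the class $[\phi^*\rho]=[\rho\circ\phi_*^{-1}]$. Since $\ell_\rho(\gamma)$ depends only on the conjugacy class of $\rho(\gamma)$ in $\mathsf{PSL}(2,\mathbb C)$, and $\phi_*$ simply permutes the conjugacy classes $[\Gamma]$, one immediately obtains
\begin{equation*}
\ell_{\phi^*\rho}(\gamma)=\ell_\rho(\phi_*^{-1}\gamma),\qquad h(\phi^*\rho)=h(\rho),\qquad R_T(\phi^*\rho)=\phi_*(R_T(\rho)).
\end{equation*}
Substituting these identities into the geometric formula established above and re-indexing the sum by $\delta=\phi_*^{-1}\gamma$, one finds
\begin{equation*}
J(\phi^*\rho_0,\phi^*\rho)=\frac{h(\rho)}{h(\rho_0)}\lim_{T\to\infty}\frac{1}{|R_T(\rho_0)|}\sum_{[\delta]\in R_T(\rho_0)}\frac{\ell_\rho(\delta)}{\ell_{\rho_0}(\delta)}=J(\rho_0,\rho).
\end{equation*}

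The proof concludes by noting that since $J$ is invariant under the diagonal action of $\mathrm{Mod}(S)$ on $QF(S)\times QF(S)$, differentiating the identity $J(\phi^*\rho_0,\phi^*\rho)=J(\rho_0,\rho)$ twice in $\rho$ at $\rho_0$ gives $\phi^*\mathbb P=\mathbb P$, so the induced path pseudo-metric $d_{\mathbb P}$ is $\mathrm{Mod}(S)$-invariant. I do not anticipate any real obstacle: the analytic work was already carried out in Theorem \ref{geometricintersection} and Proposition \ref{intersectionanalytic}, and the mapping class group invariance then reduces to the observation that the geometric expression for $J$ is manifestly a function of the $\mathrm{Mod}(S)$-orbit of the pair $(\rho_0,\rho)$.
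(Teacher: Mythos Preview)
Your proposal is correct and follows essentially the same approach as the paper: the displayed formula is obtained by combining the definition of $\mathbb P$ with Theorem \ref{geometricintersection}, and the $\mathrm{Mod}(S)$-invariance is deduced from the manifest invariance of the geometric expression for $J$ under the diagonal action, using the identities $\ell_{\phi\cdot\rho}(\gamma)=\ell_\rho(\phi_*^{\pm1}\gamma)$, $h(\phi\cdot\rho)=h(\rho)$, and $R_T(\phi\cdot\rho)=\phi_*(R_T(\rho))$. The only differences are cosmetic (your convention for the action uses $\phi_*^{-1}$ rather than $\phi_*$, and you make the final Hessian step and the appeal to Proposition \ref{intersectionanalytic} explicit where the paper leaves them implicit).
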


\begin{proof}
The expression for the pressure form follows immediately from the definition and Theorem \ref{geometricintersection}.
Now observe that if $\phi\in\mathrm{Mod}(S)$ and $\rho\in QF(S)$, then $\phi(\rho)=\rho\circ\phi_*$, so 
$\ell_\rho(\gamma)=\ell_{\phi(\rho)}(\phi_*(\gamma))$. Therefore, $R_T(\phi(\rho))=\phi_*(R_T(\rho))$,
so $|R_T(\rho)|=|R_T(\phi(\rho))|$ for all $T$ which implies that $h(\rho)=h(\phi(\rho))$.
We can also check that
\begin{eqnarray*}
I(\rho_0,\rho) &= & \lim_{T\to\infty}\frac{1}{|R_{T}(\rho_0)|}\sum_{[\gamma]\in R_{T}(\rho_0)}\frac{\ell_{\rho}(\gamma)}{\ell_{\rho_0}(\gamma)}\\
& =  &\lim_{T\to\infty}\frac{1}{|R_{T}(\rho_0)|}\sum_{[\gamma]\in R_{T}(\rho)}\frac{\ell_{\phi(\rho)}(\phi_*(\gamma))}{\ell_{\phi(\rho_0)}(\phi_*(\gamma))}\\
& = &  \lim_{T\to\infty}\frac{1}{|R_{T}(\phi(\rho_0))|}\sum_{[\gamma]\in R_{T}(\phi(\rho_0))}\frac{\ell_{\phi(\rho)}(\gamma)}{\ell_{\phi(\rho_0)}(\gamma)}\\
& = & I(\phi(\rho_0),\phi(\rho))\\
\end{eqnarray*}
Therefore, $J(\rho_0,\rho)=J(\phi(\rho_0),\phi(\rho))$ for all $\phi\in\mathrm{Mod}(S)$ and $\rho_0,\rho\in QF(S)$,
so the renormlized pressure intersection is mapping class group invariant, so the pressure metric is mapping class group
invariant.
\end{proof}

\end{document}